\theoremstyle{plain}
\newtheorem{theorem}{Theorem}[section]
\newtheorem{lemma}[theorem]{Lemma}
\newtheorem{prop}[theorem]{Proposition}
\theoremstyle{definition}
\newtheorem{definition}[theorem]{Definition}
\newtheorem{remark}[theorem]{Remark}
\newtheorem{example}[theorem]{Example}
\newtheorem{cor}[theorem]{Corollary}
\theoremstyle{remark}
\begin{document}

\title [On uniform BPB type approximations of linear operators]{On uniform Bishop-Phelps-Bollob\'as type approximations of linear operators and preservation of geometric properties}

\author[Debmalya Sain, Arpita Mal, Kalidas Mandal and Kallol Paul]{Debmalya Sain, Arpita Mal, Kalidas Mandal and Kallol Paul}

\address[Sain]{Department of Mathematics\\ Indian Institute of Science\\ Bengaluru 560012\\ Karnataka\\ India}
\email{saindebmalya@gmail.com}

\address[Mal]{Department of Mathematics\\ Jadavpur University\\ Kolkata 700032\\ India}
\email{arpitamalju@gmail.com}

\address[Mandal]{Department of Mathematics\\ Jadavpur University\\ Kolkata 700032\\ India}
\email{kalidas.mandal14@gmail.com}

\address{(Paul)Department of Mathematics, Jadavpur University, Kolkata 700032, India}
\email{kalloldada@gmail.com}

\thanks{Dr. Debmalya Sain feels elated to acknowledge the motivating presence of his beloved junior brothers, Dr. Chandrodoy Chattopadhyay and Mr. Krittish Roy, in his life! Miss Arpita Mal would like to thank UGC, Govt. of India for the financial support. Mr. Kalidas Mandal would like to thank CSIR, Govt. of India for the financial support. The research of Prof. Kallol Paul is supported by project MATRICS (Ref. no. MTR/2017/000059), SERB, DST, Govt. of India.}

\subjclass[2010]{Primary 46B20, Secondary 47L05}
\keywords{Bishop-Phelps-Bollob\'as property, norm attainment, extreme contractions, smooth operators}



\date{}
\maketitle
\begin{abstract}
We study uniform $\epsilon-$BPB approximations of bounded linear operators between Banach spaces from a geometric perspective. We show that for sufficiently small positive values of $\epsilon,$ many geometric properties like smoothness, norm attainment and extremality of operators are preserved under such approximations. We present examples of pairs of Banach spaces satisfying non-trivial norm preserving uniform $\epsilon-$BPB approximation property in the global sense. We also study these concepts in case of bounded linear operators between Hilbert spaces. Our approach in the present article leads to the improvement and generalization of some earlier results in this context.  
\end{abstract}

\section{Introduction}
The purpose of this article is to study a uniform version of Bishop-Phelps-Bollob\'as (BPB) type approximations in the space of bounded linear operators between Banach spaces and the corresponding preserver problems from a geometric viewpoint. We would like to begin with a short description of the motivation behind such an exploration. 

  The well known Bishop-Phelps Theorem \cite{BP} asserts that norm attaining functionals are dense in the dual of any Banach space.  A quantitative version of this profound result was presented by Bollob\'as in \cite{B}, and in the combined form, the Bishop-Phelps-Bollob\'as Theorem is one of the keystones of modern functional analysis and approximation theory beyond any shadow of doubt. Possible generalizations of the Bishop-Phelps-Bollob\'as Theorem, from the case of functionals to the case of operators, constitute an interesting area of research that remains active till date. We refer the readers to \cite{AAGM,ACGM,CGKS,D1,D2,DKL,DKKLM,DKLM,KL,Sa} for some of the prominent works in this context. Our present work draws its motivation from some of these, most notably \cite{AAGM,D2,DKKLM,DKLM,Sa}. However, although most of these works deal primarily with the analytic aspects of BPB type approximations of operators, our goal is to study the geometric properties of such approximations. More precisely, we study the preservation of important geometric properties, like smoothness and extreme contractions, under sufficiently accurate BPB type approximations of bounded linear operators. Let us now establish the notations and the terminologies that are essential to understand the main concepts related to BPB type approximations of bounded linear operators between Banach spaces.
  
  Letters $\mathbb{X}$, $\mathbb{Y}$ denote Banach spaces and the letter $\mathbb{H}$ is kept reserved for Hilbert spaces. Throughout the article, we assume that the underlying scalar field is $\mathbb{R}$, the field of real numbers. Let $B_\mathbb{X}=\{x\in \mathbb{X}:\|x\|\leq 1\}$  and $S_\mathbb{X}=\{x\in \mathbb{X}:\|x\|= 1\}$ denote the unit ball and the unit sphere of $\mathbb{X},$ respectively. For $x\in \mathbb{X}$ and $r>0,$ let $B(x,r)=\{z\in \mathbb{X}:\|x-z\|<r\}$ denote the open ball centered at $x$ with radius $r.$ Let $\mathbb{L}(\mathbb{X}, \mathbb{Y})$ $(\mathbb{K}(\mathbb{X}, \mathbb{Y}))$ denote the space of all bounded (compact) linear operators from $\mathbb{X}$ to $\mathbb{Y},$ endowed with the usual operator norm. Given Banach spaces $\mathbb{X}$ and $\mathbb{Y},$ Iso$(\mathbb{X}, \mathbb{Y})$ denotes the class (possibly empty) of all isometries from $\mathbb{X}$ to $\mathbb{Y}.$  For $T \in \mathbb{L}(\mathbb{X}, \mathbb{Y}),$ and for a real number $\delta > 0,$ the norm attainment set $M_T$ and the $\delta-$approximate norm attainment set $M_{T}(\delta)$ of $T$ are defined respectively as $M_T=\{x\in S_\mathbb{X}:\|Tx\|=\|T\|\}$ and $M_T(\delta)=\{x\in S_{\mathbb{X}}:\|Tx\|>\|T\|-\delta\}.$  Given a Banach space $\mathbb{X},$  $\mathbb{X}^*$ denotes the dual space of $\mathbb{X}.$ The famous Bishop-Phelps-Bollob\'as Theorem \cite{B} can now be presented in the following form: \\
  Let $\epsilon>0$ be arbitrary. If $x\in B_\mathbb{X}$ and $x^*\in S_{\mathbb{X}^*}$ are such that $|1-x^*(x)|<\frac{\epsilon^2}{4},$ then there are elements $y^*\in S_{\mathbb{X}^*}$ and $y\in M_{y^*}$  such that $\|y-x\|<\epsilon$ and $\|y^*-x^*\|<\epsilon.$
  
  The above result motivated Acosta et al. \cite{AAGM} to introduce the following definition in the study of BPB type approximations of bounded linear operators between Banach spaces. We note that the concept introduced in \cite{AAGM} is space dependent but independent of any individual operator between them. 
  
  \begin{definition}
  	Let $\mathbb{X},\mathbb{Y}$ be real or complex Banach spaces. We say that the couple $(\mathbb{X},\mathbb{Y})$ satisfies Bishop-Phelps-Bollob\'as property (BPBp in abbreviated form) for operators if given $\epsilon>0,$ there are $\eta(\epsilon)>0$ and $\beta(\epsilon)>0$ with $\lim_{t\to 0}\beta(t)=0$ such that for all $T\in S_{\mathbb{L}(\mathbb{X},\mathbb{Y})},$ if $x_0\in S_\mathbb{X}$ is such that $\|Tx_0\|>1-\eta(\epsilon),$ then there exists an operator $A\in S_{\mathbb{L}(\mathbb{X},\mathbb{Y})}$ and a point $u_0\in M_A$ that satisfy the following conditions:
  	  \[\|u_0-x_0\|<\beta(\epsilon)~\text{and~}\|A-T\|<\epsilon.\]
  \end{definition}
  
  A local version of the above concept, dependent on an individual operator between the given Banach spaces, was introduced in \cite{Sa}.

\begin{definition}\label{def-uniform}
	Let $\mathbb{X},\mathbb{Y}$ be Banach spaces and let $T\in S_{\mathbb{L}(\mathbb{X},\mathbb{Y})}.$ Let $\epsilon>0$ be fixed. We say that $A\in S_{\mathbb{L}(\mathbb{X},\mathbb{Y})}$ is a uniform $\epsilon-$BPB approximation of $T,$ if there exists $\delta(\epsilon)>0$ such that for each $x_0\in S_{\mathbb{X}}$ with $\|Tx_0\|>1-\delta(\epsilon)$ there exists $u_0\in M_A$ satisfying the following conditions:
	$$\|u_0-x_0\|<\epsilon~ \text{and}~\|T-A\|<\epsilon.$$
\end{definition}

It is easy to see that in the finite-dimensional case, every linear operator is a uniform $\epsilon-$BPB  approximation of itself, for every $\epsilon > 0.$ In view of this, for $T, A \in \mathbb{L}(\mathbb{X}, \mathbb{Y}),$ we say that $A$ is a \textit{non-trivial} uniform $\epsilon-$BPB approximation of $T,$ if in addition to the fact that $A$ is a uniform $\epsilon-$BPB approximation of $T,$ we also have that $A \neq T.$ We also recall the following global definition from \cite{Sa}:

\begin{definition}
	Let $\mathbb{X},\mathbb{Y}$ be Banach spaces. Let $\mathcal{F}$ be a family of norm one linear operators in $\mathbb{L}(\mathbb{X},\mathbb{Y}).$ We say that the pair $(\mathbb{X},\mathbb{Y})$ has uniform sBPBp with respect to $\mathcal{F}$ if given $\epsilon>0,$ there exists $\eta(\epsilon)>0$ such that whenever $T\in \mathcal{F}$ and $x_0\in S_{\mathbb{X}}$ are such that $\|Tx_0\|>1-\eta(\epsilon),$ there exists $x_1\in M_T$ such that $\|x_1-x_0\|<\epsilon.$
\end{definition}

It is apparent that the norm attainment set of an operator plays a central role in BPB type approximations in operator spaces. We study this connection in more detail by introducing the following definition:

\begin{definition}\label{def-pair}
	Let $\mathbb{X},\mathbb{Y}$ be Banach spaces. We say that the pair $(\mathbb{X},\mathbb{Y})$ has non-trivial norm preserving uniform BPB approximation property, if given any $T\in S_{\mathbb{L}(\mathbb{X},\mathbb{Y})}\setminus \text{Iso}(\mathbb{X},\mathbb{Y})$ and any $\epsilon>0,$ $T$ admits a non-trivial uniform $\epsilon-$BPB approximation $A_\epsilon$ such that $M_T=M_{A_\epsilon}.$
\end{definition}

The primary objective of this article is to illustrate that many important geometric concepts associated with bounded linear operators between Banach spaces are preserved under uniform $\epsilon-$BPB type approximations, when $\epsilon > 0$ is sufficiently small. It is clear that our approach is related to the preserver problems in connection with BPB type approximations in operator spaces, from a geometric point of view. After this introductory section, the paper is demarcated into four sections. In the second section, we collect some basic definitions related to the geometric properties of a Banach space, which are extensively used throughout the article. In the third section, we study uniform $\epsilon-$BPB approximations on $\mathbb{L}(\mathbb{X}, \mathbb{Y})$ with emphasis on the preservation of the norm attainment set of an operator. This allows us to study the geometry of the dual space of a given Banach space, from the perspective of BPB type approximations and in connection with geometric properties like smoothness and extreme contractions. We also improve an earlier result on uniform sBPBp obtained in \cite{Sa}. In the fourth section, we present some concrete operator theoretic results in the setting of $\ell_\infty^{n}, \ell_1^{n}$ and $\ell_p^2$ spaces, where $p \in \mathbb{N} \setminus \{1, 2\},$ that further illustrate the connection between BPB type approximations of linear operators and geometries of the underlying spaces. In particular, we show that the isometries on these spaces can be characterized by the nonexistence of non-trivial uniform $\epsilon-$BPB type approximations. In the fifth and the final section of this article, we consider the case of bounded linear operators between Hilbert spaces, in light of uniform $\epsilon-$BPB type approximations. In \cite{SP}, it has been proved that the norm attainment set of a bounded linear operator between Hilbert spaces is a special subset of the unit sphere of the domain space, in the sense that either it is empty or it is the unit sphere of some subspace of the domain space. This observation finds its use in the study of preservation of geometric properties of an operator between Hilbert spaces, under uniform $\epsilon-$BPB type approximations. We also obtain a characterization of Hilbert spaces among $\ell_p^n$ spaces, in terms of isometries and uniform $\epsilon-$BPB type approximations. Our results in this article illustrate that BPB type approximations of bounded linear operators between Banach (Hilbert) spaces are intimately related to the geometries of the underlying spaces and can be used as an effective tool in the study of the geometry of operator spaces.

\section{Preliminary}

For a non-zero vector $x\in \mathbb{X},$  $J(x)$ denotes the set of all supporting linear functionals of $x,$ i.e., $J(x)=\{f\in S_{\mathbb{X}^*}:f(x)=\|x\|\}.$ By the Hahn-Banach Theorem, it is clear that $J(x)\neq \emptyset.$ A non-zero vector $x$ is said to be smooth if $J(x)$ is singleton and $\mathbb{X}$ is said to be a smooth Banach space if every non-zero vector of $\mathbb{X}$ is smooth. 

$\mathbb{X}$ is said to be a strictly convex Banach space if for any $x,y\in S_\mathbb{X}$ with $x\neq y,$ $\|(1-t)x+ty\|<1$ holds for all $t\in (0,1).$ Equivalently, $\mathbb{X}$ is strictly convex if and only if $S_\mathbb{X}$ does not contain any non-trivial straight line segment.\\
A finite-dimensional Banach space $\mathbb{X}$ is said to be polyhedral if $B_{\mathbb{X}}$ contains only finitely many extreme points. Equivalently, $\mathbb{X}$ is a polyhedral Banach space, if $B_\mathbb{X}$ is a polyhedron.
Let us recall the following three definitions related to polyhedral Banach spaces. We note that the third definition was introduced in \cite{SPBB} in order to study the geometry of polyhedral Banach spaces in more detail.
\begin{definition}
	A polyhedron $P$ is a non-empty compact subset of $\mathbb{X}$ which is the intersection of finitely many closed half-spaces of $\mathbb{X},$ i.e., $P=\cap_{i=1}^rM_i,$ where $M_i$ are closed half-spaces in $\mathbb{X}$ and $r\in \mathbb{N}.$ The dimension $dim(P)$ of the polyhedron $P$ is defined as the dimension of the subspace generated by the differences $v-w$ of vectors $v,w\in P.$
\end{definition}

\begin{definition}
	A polyhedron $Q$ is said to be a face of the polyhedron $P$ if either $Q=P$ or if we can write $Q=P\cap \delta M,$ where $M$ is a closed half-space in $\mathbb{X}$ containing $P$ and $\delta M$ denotes the boundary of $M.$ If $dim(Q)=i,$ then $Q$ is called an $i-$face of $P$. $(n-1)-$faces of $P$ are called facets of $P$ and $1-$faces of $P$ are called edges of $P.$
\end{definition}

\begin{definition}
	Let $\mathbb{X}$ be a finite-dimensional polyhedral Banach space. Let $F$ be a facet of the unit ball $B_\mathbb{X}$ of $\mathbb{X}.$ A functional $f\in S_{\mathbb{X}^*}$ is said to be a supporting functional corresponding to the facet $F$ of the unit ball $B_\mathbb{X}$ if the following two conditions are satisfied:\\
	(i) $f$ attains norm at some point $v$ of $F,$\\
	(ii) $F=(v+\ker(f))\cap S_\mathbb{X}.$
\end{definition}
$\mathbb{X}$ is said to be Kadets-Klee if a sequence $\{x_n\}$ in $\mathbb{X}$ and $x\in \mathbb{X}$ is such that $x_n\stackrel{w}{\rightharpoonup} x$ and $\|x_n\|\to \|x\|,$ then $x_n\to x.$\\
$\mathbb{X}$ is said to be uniformly convex if for each $0<\epsilon\leq 2,$ $\delta_{\mathbb{X}}(\epsilon)>0$ holds, where the function $\delta_{\mathbb{X}}:[0,2]\to[0,1]$ is defined by: $$\delta_{\mathbb{X}}(\epsilon)=\inf\Big\{1-\frac{\|x+y\|}{2}:x,y\in S_\mathbb{X},\|x-y\|\geq \epsilon\Big\}.$$   

$\mathbb{X}$ is said to be uniformly smooth, if $\lim_{t\to 0^+}\frac{\rho_\mathbb{X}(t)}{t}=0$ holds, where the function $\rho_\mathbb{X}:(0,+\infty)\to[0,+\infty)$ is defined by:
$$\rho_{\mathbb{X}}(t)=\sup\Big\{\frac{\|x+ty\|+\|x-ty\|}{2}-1:x,y\in S_{\mathbb{X}}\Big\}.$$ 

For $x,y\in \mathbb{X},$ $x$ is said to Birkhoff-James orthogonal \cite{Bi,J} to $y$, written as $x\perp_B y$ if $\|x+\lambda y\|\geq \|x\|$ for all $\lambda \in \mathbb{R}.$ $x$ is said to be strongly Birkhoff-James orthogonal \cite{PSJ} to $y,$ written as $x\perp_{SB}y,$  if $\|x+\lambda y\|>\|x\|$ for all non-zero scalar $\lambda.$ \\
A linear projection $P$ on $\mathbb{X}$ is called an $L-$projection  if $\|x\|=\|Px\|+\|x-Px\|$ for all $x\in \mathbb{X}.$ A closed subspace $J \subseteq \mathbb{X}$ is called an $L-$summand \cite{HWW} if it is the range of an $L-$projection.\\
For a convex set $C,~\text{int}_r(C)$ denotes the relative interior of the set $C,$ i.e., $x\in \text{int}_r(C)$ if there exists $\epsilon>0$ such that $B(x,\epsilon)\cap \text{affine}(C)\subseteq C,$ where $\text{affine}(C)$ is the intersection of all affine sets containing $S$ and an affine set is defined as the translation of a vector subspace.  \\
An operator $T\in B_{\mathbb{L}(\mathbb{X},\mathbb{Y})}$ is said to be an extreme contraction if $T$ is an extreme point of the unit ball of $\mathbb{L}(\mathbb{X},\mathbb{Y}).$ A complete characterization of extreme contractions between Banach spaces in terms of geometric or analytic properties of the underlying spaces remains unknown. We refer the readers to \cite{G,L,SPM1,SRP,Sh} for more information in this regard.

\section{Uniform $\epsilon-$BPB approximation on $\mathbb{L}(\mathbb{X},\mathbb{Y})$}

We begin with a useful result illustrating that under sufficiently nice conditions, the approximate norm attainment set of a compact operator cannot be far from its norm attainment set.

\begin{theorem}\label{th-appnorm}
	Let $\mathbb{X}$ be a reflexive, Kadets-Klee Banach space and let $\mathbb{Y}$ be a Banach space. Let $T\in \mathbb{K}(\mathbb{X},\mathbb{Y}).$ Then for each $\epsilon>0,$ there exists $\delta(\epsilon)>0$ satisfying the following:
	$$\text{for~any~}0<\delta\leq\delta(\epsilon),~M_T(\delta)\subseteq \cup_{x\in M_T}B(x,\epsilon).$$
\end{theorem}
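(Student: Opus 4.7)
The plan is to argue by contradiction using the three structural hypotheses: reflexivity (for weak compactness), compactness of $T$ (to upgrade weak convergence on the domain to norm convergence on the image), and the Kadets-Klee property (to upgrade weak convergence plus norm convergence back to norm convergence on the domain).

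First I would suppose the conclusion fails for some $\epsilon>0$. Then for each $n\in\mathbb{N}$ I can pick $x_n\in M_T(1/n)$ with $x_n\notin\bigcup_{x\in M_T} B(x,\epsilon)$, so $x_n\in S_\mathbb{X}$, $\|Tx_n\|>\|T\|-1/n$, and $\|x_n-x\|\geq\epsilon$ for every $x\in M_T$. Since $\mathbb{X}$ is reflexive, the bounded sequence $\{x_n\}$ has a weakly convergent subsequence, which I relabel as $x_n\stackrel{w}{\rightharpoonup}x_0$ with $x_0\in B_\mathbb{X}$.

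Next I would exploit compactness of $T$: weakly convergent sequences are mapped to norm convergent sequences by compact operators, so $Tx_n\to Tx_0$ in norm. Consequently $\|Tx_0\|=\lim_n\|Tx_n\|=\|T\|$, which forces $\|x_0\|\geq 1$ and hence $\|x_0\|=1$. In particular $x_0\in M_T$. Now $\|x_n\|=1\to 1=\|x_0\|$ together with $x_n\stackrel{w}{\rightharpoonup}x_0$ lets the Kadets-Klee property kick in, giving $x_n\to x_0$ in norm. This contradicts $\|x_n-x_0\|\geq\epsilon$ since $x_0\in M_T$, completing the proof.

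I do not expect any real obstacle here; the argument is a clean compactness argument and each hypothesis is used exactly once in its natural role. The only subtlety to double-check is that $M_T$ is nonempty under these assumptions, but this is automatic from the same argument applied to any norming sequence for $T$ (by reflexivity and compactness of $T$), so even the case $M_T=\emptyset$ is vacuously handled once one notes that the chosen $x_0$ lies in $M_T$.
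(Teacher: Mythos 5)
Your proposal is correct and follows essentially the same route as the paper's proof: contradiction, weak compactness from reflexivity, norm convergence of images from compactness of $T$, and the Kadets--Klee property to upgrade to norm convergence in the domain. The only cosmetic difference is that the paper explicitly invokes the monotonicity $M_T(\delta)\subseteq M_T(\delta(\epsilon))$ for $0<\delta\leq\delta(\epsilon)$ to get the stated ``for any $\delta$'' form, which your argument implies but does not spell out.
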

\begin{proof}
Let $\epsilon>0$ be given. If possible, suppose that there does not exist $\delta>0$ such that $M_T(\delta)\subseteq \cup_{x\in M_T}B(x,\epsilon).$ Then for each $n\in \mathbb{N},$ there exists $z_n\in M_T(\frac{1}{n})$ such that $z_n\notin \cup_{x\in M_T}B(x,\epsilon).$ Therefore, $\|Tz_n\|>\|T\|-\frac{1}{n}$ gives that $\|Tz_n\|\to \|T\|.$ Since $\mathbb{X}$ is reflexive, $B_{\mathbb{X}}$ is weakly compact. Therefore, $\{z_n\}$ has a weakly convergent subsequence in $B_{\mathbb{X}}.$ Without loss of generality, assume that $\{z_n\}$ weakly converges to $z\in B_{\mathbb{X}}.$ Now, since $T$ is a compact operator, $Tz_n\to Tz,$ i.e., $\|Tz_n\|\to \|Tz\|.$  Thus, $\|Tz\|=\|T\|.$ This clearly implies that $z\in M_T.$ Note that $z_n\stackrel{w}{\rightharpoonup} z$ and $\|z_n\|=\|z\|=1.$ Now, since $\mathbb{X}$ is Kadets-Klee, we get, $z_n\to z.$ This clearly contradicts that $z_n\notin \cup_{x\in M_T}B(x,\epsilon)$ for each $n\in \mathbb{N}.$ Therefore, there exists $\delta(\epsilon)>0$ such that $M_T(\delta(\epsilon))\subseteq \cup_{x\in M_T}B(x,\epsilon).$ Now, using \cite[Prop. 2.1]{Sa}, we have, for any $0<\delta\leq\delta(\epsilon),~M_T(\delta)\subseteq \cup_{x\in M_T}B(x,\epsilon).$ This completes the proof of the theorem.
\end{proof}

As a consequence of the above theorem, we get the following corollary, which improves on \cite[Th. 2.3]{Sa}. 

\begin{cor}
	Let $\mathbb{X}$ be a reflexive, Kadets-Klee Banach space and let $\mathbb{Y}$ be a Banach space. Let $T\in \mathbb{K}(\mathbb{X},\mathbb{Y})$ be a smooth operator. Then there exists $x_0\in S_\mathbb{X}$ such that for each $\epsilon>0,$ there exists $\delta(\epsilon)>0$ satisfying the following:
	$$\text{for~any~}0<\delta\leq\delta(\epsilon),~M_T(\delta)\subseteq B(x_0,\epsilon)\cup B(-x_0,\epsilon).$$
\end{cor}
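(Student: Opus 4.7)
The plan is to reduce the statement to Theorem 3.1 via the structural observation that a smooth compact operator $T$ between Banach spaces must have $M_T = \{x_0,-x_0\}$ for some single $x_0 \in S_{\mathbb{X}}$. Once this is established, the conclusion is immediate: applying Theorem 3.1 with $M_T = \{x_0,-x_0\}$ gives $M_T(\delta) \subseteq \cup_{x \in M_T} B(x,\epsilon) = B(x_0,\epsilon) \cup B(-x_0,\epsilon)$ for any $0 < \delta \leq \delta(\epsilon)$.

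To verify this structural fact, I would first use reflexivity of $\mathbb{X}$ and compactness of $T$ (the case $T=0$ being trivial) to obtain some $x_0 \in M_T$, and then argue that no $x_1 \in M_T$ can be linearly independent of $x_0$. Suppose toward contradiction that $x_0,x_1 \in M_T$ are linearly independent (equivalently $x_1 \neq \pm x_0$, since both lie on $S_{\mathbb{X}}$). For $i = 0,1$ choose $y_i^* \in J(Tx_i) \subseteq S_{\mathbb{Y}^*}$ (non-empty by Hahn-Banach, since $Tx_i \neq 0$) and define $\phi_i \in S_{\mathbb{L}(\mathbb{X},\mathbb{Y})^*}$ by $\phi_i(A) = y_i^*(Ax_i)$. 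Direct evaluation at $T$ gives $\phi_i(T) = y_i^*(Tx_i) = \|Tx_i\| = \|T\|$, so $\phi_0,\phi_1 \in J(T)$; smoothness of $T$ then forces $\phi_0 = \phi_1$. To break this, I would pick $f \in \mathbb{X}^*$ with $f(x_0)=1$ and $f(x_1)=0$ (available by linear independence) together with $y \in \mathbb{Y}$ satisfying $y_0^*(y) \neq 0$, and consider the rank-one operator $A \in \mathbb{L}(\mathbb{X},\mathbb{Y})$ defined by $A(x) = f(x)\,y$. Then $\phi_0(A) = y_0^*(y) \neq 0$ while $\phi_1(A) = y_1^*(0) = 0$, contradicting $\phi_0 = \phi_1$.

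The main obstacle is the structural lemma in the second paragraph, namely ruling out the existence of a norm-attainment vector linearly independent of $x_0$. The rank-one operator construction is standard, but care is needed to verify that each $\phi_i$ is genuinely of norm one and lies in $J(T)$, and to select $f$ and $y$ so that the contradiction actually appears. Once this structural fact is in place, Theorem 3.1 finishes the proof without further work, with the same $\delta(\epsilon)$ supplied by that theorem.
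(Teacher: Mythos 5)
Your proposal is correct and follows the same route as the paper: reduce everything to the structural fact that a smooth compact operator on a reflexive space satisfies $M_T=\{\pm x_0\}$ for a single $x_0\in S_{\mathbb{X}}$, and then invoke Theorem 3.1 with that two-point norm attainment set. The only difference is that the paper obtains this structural fact by citing Theorem 4.2 of Paul--Sain--Ghosh, whereas you supply a correct self-contained proof of it via the functionals $\phi_i(A)=y_i^*(Ax_i)$ and the rank-one separating operator.
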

\begin{proof}
	Since $T$ is smooth operator, from \cite[Th. 4.2]{PSG}, we get $M_T=\{\pm x_0\}$ for some $x_0\in S_{\mathbb{X}}.$ Now, the result follows from Theorem \ref{th-appnorm}.
\end{proof}

 We next discuss the preservation of the norm attainment set of an operator under uniform $\epsilon-$BPB approximations. We begin with the case of rank one operators between a reflexive Kadets-Klee Banach space and any Banach space having dimension greater than one.

\begin{theorem}\label{th-rank1}
	Let $\mathbb{X}$ be a reflexive, Kadets-Klee Banach space and $\mathbb{Y}$ be a Banach space such that $dim(\mathbb{Y})>1.$ Let $T\in S_{\mathbb{L}(\mathbb{X},\mathbb{Y})}$ be a rank one operator. Then for each $\epsilon>0,$ $T$ admits a non-trivial uniform $\epsilon-$BPB approximation $A_\epsilon$ such that $A_\epsilon$ is rank one and $M_{A_\epsilon}=M_T.$ 
\end{theorem}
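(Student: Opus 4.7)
The plan is to exploit the rank-one structure of $T$ to perturb only the range direction, keeping the functional (and hence the norm-attainment set) intact, and then invoke Theorem \ref{th-appnorm} to obtain the uniform BPB control for free. Concretely, since $T$ has rank one and $\|T\| = 1$, write $Tx = f(x)\, y_0$ for some $f \in S_{\mathbb{X}^*}$ and $y_0 \in S_{\mathbb{Y}}$. Observe that
\[
M_T \;=\; \{x \in S_{\mathbb{X}} : |f(x)| = 1\},
\]
which depends only on $f$, not on $y_0$. This is the key structural observation that makes the construction work.

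Next, using the hypothesis $\dim(\mathbb{Y}) > 1$, I would pick any $y_1 \in \mathbb{Y}$ linearly independent from $y_0$ and, for a sufficiently small parameter $t > 0$, set
\[
y_\epsilon \;=\; \frac{y_0 + t y_1}{\|y_0 + t y_1\|} \in S_{\mathbb{Y}}.
\]
Linear independence guarantees $y_\epsilon \neq y_0$, and a routine estimate shows $\|y_\epsilon - y_0\| \to 0$ as $t \to 0^+$, so $t$ can be chosen so that $\|y_\epsilon - y_0\| < \epsilon$. Define $A_\epsilon x := f(x)\, y_\epsilon$. Then $A_\epsilon$ is rank one with $\|A_\epsilon\| = \|f\|\,\|y_\epsilon\| = 1$, $A_\epsilon \neq T$, and
\[
\|A_\epsilon - T\| \;=\; \|f\|\,\|y_\epsilon - y_0\| \;<\; \epsilon.
\]
Moreover $\|A_\epsilon x\| = |f(x)| = \|Tx\|$ for every $x \in \mathbb{X}$, whence $M_{A_\epsilon} = M_T$ as required.

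It remains to verify the uniform $\epsilon$-BPB condition, i.e.\ to produce $\delta(\epsilon) > 0$ such that every $x_0 \in S_{\mathbb{X}}$ with $\|Tx_0\| > 1 - \delta(\epsilon)$ lies within $\epsilon$ of some $u_0 \in M_{A_\epsilon} = M_T$. Since $T$ has rank one, it is compact, and $\mathbb{X}$ is reflexive and Kadets-Klee by hypothesis, so Theorem \ref{th-appnorm} applies to $T$ and yields exactly such a $\delta(\epsilon)$, with
\[
M_T(\delta(\epsilon)) \;\subseteq\; \bigcup_{x \in M_T} B(x, \epsilon).
\]
Combined with $\|T - A_\epsilon\| < \epsilon$, this is precisely the uniform $\epsilon$-BPB approximation property in the sense of Definition \ref{def-uniform}. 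The only non-routine ingredient is the reduction via $M_{A_\epsilon} = M_T$, which lets us replace the approximate norm attainment set of $A_\epsilon$ by that of $T$ and thereby invoke Theorem \ref{th-appnorm}; the potential difficulty of ensuring a small perturbation of $y_0$ within $S_{\mathbb{Y}}$ is entirely handled by the hypothesis $\dim(\mathbb{Y}) > 1$.
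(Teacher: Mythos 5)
Your proposal is correct and follows essentially the same route as the paper's proof: perturb only the range vector of the rank-one operator $Tx=f(x)y_0$ to get $A_\epsilon x=f(x)y_\epsilon$ with $0<\|y_\epsilon-y_0\|<\epsilon$ (the hypothesis $\dim(\mathbb{Y})>1$ being used exactly as you use it), note $M_{A_\epsilon}=M_f=M_T$, and invoke Theorem \ref{th-appnorm} for the uniform BPB constant. The only cosmetic difference is that the paper takes $\|u-w\|<\epsilon/2$ rather than $<\epsilon$, which changes nothing.
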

\begin{proof}
	Since $T$ is rank one operator, there exists $f\in S_{\mathbb{X}^*}$ and $w\in S_{\mathbb{Y}}$ such that $Tx=f(x)w$ for all $x\in \mathbb{X}.$ Since  $dim(\mathbb{Y})>1,$ we can choose $u\in S_{\mathbb{Y}}$ such that $0<\|u-w\|<\frac{\epsilon}{2}.$ (Observe that if $dim(\mathbb{Y})=1,$ then we cannot find $u$ such that $0<\|u-w\|<\frac{\epsilon}{2},$ since $\mathbb{Y}$ is a real Banach space.) Using Theorem \ref{th-appnorm}, we say that there exists $\delta(\epsilon)>0$ such that $M_T(\delta(\epsilon))\subseteq\cup_{x\in M_T}B(x,\epsilon).$ Define $A_\epsilon\in \mathbb{L}(\mathbb{X},\mathbb{Y})$ by $A_\epsilon x=f(x)u$ for all $x\in \mathbb{X}.$ Then clearly, $\|A_\epsilon\|=1, T\neq A_\epsilon,$ $A_\epsilon$ is rank one and $M_T=M_{A_\epsilon}=M_f.$ We show that $A_\epsilon$ is a non-trivial uniform $\epsilon-$BPB approximation of $T.$ Let $z\in S_{\mathbb{X}}$ be such that $\|Tz\|>1-\delta(\epsilon).$ Then $z\in M_T(\delta(\epsilon)).$ Hence, there exists $x\in M_T$ such that $z\in B(x,\epsilon),$ i.e., $x\in M_{A_\epsilon}$ and $\|x-z\|<\epsilon.$ Now, for $z\in S_{\mathbb{X}},~\|(T-A_\epsilon)z\|=\|f(z)(w-u)\|\leq\|w-u\|<\frac{\epsilon}{2}$ implies that $\|T-A_\epsilon\|\leq \frac{\epsilon}{2}<\epsilon.$ This completes the proof of the theorem.
\end{proof}

Our next result shows that the property of being a non-extreme contraction for a compact operator on a reflexive Kadets-Klee Banach space can be preserved under uniform $\epsilon-$BPB approximations, along with the preservation of the norm attainment set.

\begin{theorem}\label{th-notextreme}
	Let $\mathbb{X}$ be a reflexive, Kadets-Klee Banach space and $\mathbb{Y}$ be a Banach space. Let $T\in S_{\mathbb{K}(\mathbb{X},\mathbb{Y})}$ be such that $T$ is not an extreme contraction. Then for each $\epsilon>0,$ $T$ admits a non-trivial uniform $\epsilon-$BPB approximation $A_\epsilon$ such that $M_{A_\epsilon}=M_T.$ Moreover, $A_\epsilon$ is not an extreme contraction. 
\end{theorem}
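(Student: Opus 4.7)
My approach is to define the approximation explicitly in the direction afforded by the non-extremality. Since $T$ is not an extreme point of $B_{\mathbb{L}(\mathbb{X},\mathbb{Y})}$, there exists a nonzero $S\in\mathbb{L}(\mathbb{X},\mathbb{Y})$ with $\|T\pm S\|\le 1$. I will set $A_\epsilon:=T+\lambda S$, where $\lambda\in(0,1)$ is chosen small enough that $\lambda\|S\|<\epsilon$. The identity $A_\epsilon=(1-\lambda)T+\lambda(T+S)$ gives $\|A_\epsilon\|\le 1$ at once, while $A_\epsilon\ne T$ and $\|T-A_\epsilon\|<\epsilon$ are built into the construction.

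The heart of the argument is establishing $M_T=M_{A_\epsilon}$. For the inclusion $M_T\subseteq M_{A_\epsilon}$, I would fix $x\in M_T$ and select $f\in J(Tx)$ via Hahn--Banach. Starting from $Tx=\tfrac{1}{2}\bigl((T+S)x+(T-S)x\bigr)$ together with $\|(T\pm S)x\|\le 1$, evaluating $f$ on both sides and comparing extremes forces $f((T+S)x)=f((T-S)x)=1$, hence $f(Sx)=0$. Consequently $\|A_\epsilon x\|\ge f(A_\epsilon x)=f(Tx)+\lambda f(Sx)=1$, so $x\in M_{A_\epsilon}$. For the reverse inclusion, any $y\in M_{A_\epsilon}$ satisfies
\[
1=\|A_\epsilon y\|\le (1-\lambda)\|Ty\|+\lambda\|(T+S)y\|\le 1,
\]
which forces $\|Ty\|=1$. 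Compactness of $T$ combined with reflexivity of $\mathbb{X}$ guarantees $M_T\ne\emptyset$, so the same computation delivers $\|A_\epsilon\|=1$ as well.

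With $M_T=M_{A_\epsilon}$ in hand, the uniform $\epsilon$-BPB property is essentially a direct invocation of Theorem \ref{th-appnorm} applied to $T$: it produces $\delta(\epsilon)>0$ such that every $z\in S_\mathbb{X}$ with $\|Tz\|>1-\delta(\epsilon)$ lies within $\epsilon$ of some $x\in M_T=M_{A_\epsilon}$, which is exactly what Definition \ref{def-uniform} requires. To finish, I would verify that $A_\epsilon$ is itself not an extreme contraction: choose $\mu\in(0,\min\{\lambda,1-\lambda\})$ and note that $A_\epsilon\pm\mu S=T+(\lambda\pm\mu)S$ are both convex combinations of $T$ and $T+S$, hence lie in $B_{\mathbb{L}(\mathbb{X},\mathbb{Y})}$; their midpoint is $A_\epsilon$ and they are distinct because $S\ne 0$. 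The main subtle step is the identification $f(Sx)=0$ for $f\in J(Tx)$, which is precisely where the non-extremality hypothesis on $T$ enters in a decisive way; everything else reduces to convex arithmetic and a single application of the preceding Theorem \ref{th-appnorm}.
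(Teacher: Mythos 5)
Your proposal is correct and takes essentially the same route as the paper: writing $S=\tfrac12(T_1-T_2)$ for the paper's decomposition $T=\tfrac12 T_1+\tfrac12 T_2$, your $A_\epsilon=T+\lambda S$ is precisely the paper's $A_n=(1-\tfrac1n)T+\tfrac1n T_1$ with $\lambda=\tfrac1n$, and both arguments reduce the uniform $\epsilon$-BPB property to Theorem~\ref{th-appnorm}. The only cosmetic difference is that you verify $M_T\subseteq M_{A_\epsilon}$ via a supporting functional $f\in J(Tx)$ with $f(Sx)=0$, whereas the paper writes $T$ as a convex combination of $A_n$ and $T_2$ and uses the triangle inequality; both are sound, and your explicit observations that $M_T\neq\emptyset$ (whence $\|A_\epsilon\|=1$) and that $A_\epsilon\pm\mu S$ witnesses non-extremality of $A_\epsilon$ are correct details that the paper leaves implicit.
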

\begin{proof}
	Since $T$ is not an extreme contraction, there exist $T_1,T_2\in S_{\mathbb{L}(\mathbb{X},\mathbb{Y})}$ such that $T=\frac{1}{2}T_1+\frac{1}{2}T_2$ and $T\neq T_1,T\neq T_2.$ For each natural number $n>1,$ define $A_n=(1-\frac{1}{n})T+\frac{1}{n}T_1.$ Then $\|A_n\|\leq 1.$ Clearly, $T=\frac{n}{n+1}A_n+\frac{1}{n+1}T_2.$ Now, for each $x\in M_T,$
	\begin{eqnarray*}
	\|Tx\|&=& \|\frac{1}{n+1}T_2x+\frac{n}{n+1}A_nx\|\\
	\Rightarrow 1 &\leq &\frac{1}{n+1}\|T_2x\|+\frac{n}{n+1}\|A_nx\|\\
	&\leq &\frac{1}{n+1}+\frac{n}{n+1}\\
	&=&1.
	\end{eqnarray*}
This implies that $\|A_nx\|=1.$ Hence, $\|A_n\|=1$ and $M_T\subseteq M_{A_n}.$ Similarly, it can be shown that $M_{A_n}\subseteq M_T.$ Now, using Theorem \ref{th-appnorm}, we say that there exists $\delta(\epsilon)>0$ such that $M_T(\delta(\epsilon))\subseteq \cup_{x\in M_T}B(x,\epsilon).$ Clearly, $A_n\to T.$ So there exists $k\in \mathbb{N}$ such that $\|A_k-T\|<\epsilon.$ Let $z\in S_{\mathbb{X}}$ be such that $\|Tz\|>1-\delta(\epsilon).$ Then $z\in M_T(\delta(\epsilon))$ implies that there exists $x\in M_T$ such that $z\in B(x,\epsilon),$ i.e., $x\in M_{A_k}$ and $\|z-x\|<\epsilon.$ Thus, $A_k$ is a non-trivial uniform $\epsilon-$BPB approximation of $T$ such that $M_T=M_{A_k}.$ From the construction of $A_k,$ it is clear that $A_k$ is not an extreme contraction. This completes the proof of the theorem.
\end{proof}

The following theorem deals with extreme linear functionals defined on a finite-dimensional polyhedral Banach space. 

\begin{theorem}\label{th-polyextreme}
	Let $\mathbb{X}$ be a finite-dimensional polyhedral Banach space. Let $f\in S_{\mathbb{X}^*}$ be an extreme point of $B_{\mathbb{X}^*}$. Then there exists $\epsilon_0>0$ such that for each $0<\epsilon<\epsilon_0,$ $f$ is the only uniform $\epsilon-$BPB approximation of $f$.
\end{theorem}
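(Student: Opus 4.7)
The approach is to exploit the combinatorial structure of the polytope $B_{\mathbb{X}^*}$ at the vertex $f$: I will show that any $g\in S_{\mathbb{X}^*}$ sufficiently close to $f$ but $g\neq f$ has norm-attainment set $M_g$ contained in (the $\pm$-symmetrization of) a proper sub-face of the facet on which $f$ attains its norm, which makes the uniform BPB covering condition fail once $\epsilon$ is small.

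\emph{Setup and identification of $M_g$.} Since $\mathbb{X}$ is finite-dimensional polyhedral, $B_{\mathbb{X}^*}$ is also a polytope, and the hypothesis that $f$ is an extreme point of $B_{\mathbb{X}^*}$ says that $f$ is a vertex. By standard face duality, $F := \{x\in S_{\mathbb{X}} : f(x)=1\}$ is a facet of $B_{\mathbb{X}}$ with $\dim F = n-1$ (where $n = \dim \mathbb{X}$), and $M_f = F\cup(-F)$. Because $B_{\mathbb{X}^*}$ has only finitely many facets and any facet not containing $f$ is at a positive distance from $f$, there exists $\epsilon_1>0$ such that any $g\in S_{\mathbb{X}^*}$ with $\|g-f\|<\epsilon_1$ lies in the relative interior of some face $H$ of $B_{\mathbb{X}^*}$ with $f\in H$. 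Writing such a $g$ as a strict convex combination of the (finitely many) vertices of $H$, one of which is $f$, a short calculation using that each vertex of $B_{\mathbb{X}^*}$ has norm $1$ on $B_{\mathbb{X}}$ shows that $M_g^+:=\{x\in S_{\mathbb{X}}: g(x)=1\}$ equals the intersection of the facets of $B_{\mathbb{X}}$ dual to these vertices. Hence $M_g^+$ is a face $G_H$ of $B_{\mathbb{X}}$ with $G_H\subseteq F$ and $\dim G_H = n-1-\dim H$. For $g\neq f$, $H\neq\{f\}$ so $\dim G_H\leq n-2$; thus $G_H$ is a proper sub-face of $F$ and $M_g = G_H\cup(-G_H)$.

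\emph{Witness point and contradiction.} Pick any $x_0$ in the relative interior of $F$. Since $x_0$ lies in no proper face of $F$ and only finitely many candidate sub-faces $G_H$ arise in the previous step,
\[
d := \min_{H\neq\{f\}}\operatorname{dist}(x_0,G_H) > 0.
\]
Set $\epsilon_0 := \min\{\epsilon_1, d, 1\}$ and fix $0<\epsilon<\epsilon_0$. Suppose for contradiction that some $g\in S_{\mathbb{X}^*}\setminus\{f\}$ is a uniform $\epsilon$-BPB approximation of $f$. Then $\|g-f\|<\epsilon<\epsilon_1$, so by the previous step $M_g = G_H\cup(-G_H)$ for some $H\neq\{f\}$. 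Applying the BPB condition to $x_0\in F\subseteq M_f\subseteq M_f(\delta(\epsilon))$ would yield $u_0\in M_g$ with $\|u_0-x_0\|<\epsilon$. But $\operatorname{dist}(x_0,G_H)\geq d>\epsilon$, and for any $v\in -G_H$ one has $\|x_0-v\|\geq |f(x_0)-f(v)| = 2 > \epsilon$ since $\|f\|=1$. This is the desired contradiction.

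\emph{Main obstacle.} The real content sits in the face-duality calculation — correctly identifying $M_g^+$ for $g$ in the relative interior of a face of $B_{\mathbb{X}^*}$ as the dual sub-face $G_H$ with the precise dimension count $\dim G_H = n-1-\dim H$, and verifying that a small $\|g-f\|$ forces $g$ onto a face containing $f$. Once this structural fact is in hand, the remainder is a one-shot compactness argument: a relative-interior point of $F$ is bounded away from each of the finitely many proper sub-faces of $F$, and the $\pm$-gap of size $2$ between $F$ and $-F$ takes care of the $-G_H$ piece.
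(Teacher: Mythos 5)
Your proof is correct. It rests on the same two pillars as the paper's argument: a witness point $x_0$ in the relative interior of the facet $F$ dual to $f$, and the finiteness of the face lattice, which gives a uniform positive distance from $x_0$ to everything that could serve as $M_g$ for $g\neq f$. Where you differ is in the middle step. You front-load a classification of $M_g$ for all $g$ near $f$ by working in the dual polytope: $g$ lies in the relative interior of a face $H\ni f$ of $B_{\mathbb{X}^*}$, is a strict convex combination of the vertices of $H$, and hence $M_g^+$ is the intersection of the corresponding facets of $B_{\mathbb{X}}$ --- a proper subface of $F$ once $g\neq f$. The paper never computes $M_g$; instead it argues forward from the BPB condition: the point $u_0\in M_{g_\epsilon}$ with $\|u_0-x_0\|<\epsilon$ lies in the relative interior of some face $F'\subseteq M_{g_\epsilon}$, the distance bound forces $x_0\in F'$, so $x_0\in M_{g_\epsilon}$, whence (since $x_0\in \mathrm{int}_r(F)$ and $g_\epsilon$ is affine of modulus $\le 1$) the whole facet $F\subseteq M_{g_\epsilon}$ and $g_\epsilon(x_0)=1$ by $\|f-g_\epsilon\|<2$; uniqueness of the supporting functional of a facet then gives $g_\epsilon=f$ outright. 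The paper's route avoids the duality bookkeeping (the dimension count $\dim G_H=n-1-\dim H$ is asserted but never actually needed in your write-up either); your route buys the extra structural fact that every nearby $g\neq f$ has $M_g$ equal to the $\pm$ of a proper subface of $F$, which is more information than the theorem requires but makes the final contradiction a one-liner.
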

\begin{proof}
	Since $f\in S_{\mathbb{X}^*}$ is an extreme point of $B_{\mathbb{X}^*},$ $f$ is a supporting linear functional corresponding to a facet, say $M$ of $B_{\mathbb{X}}.$ Choose $x\in \text{int}_r(M).$ Suppose 
	$$0<\epsilon_0<\min\{2,d(x,F):F~\text{is ~a~face~of~}B_\mathbb{X}, x\notin F\}.$$
	 Let $0<\epsilon<\epsilon_0.$ If possible, suppose that $f$ admits a non-trivial uniform $\epsilon-$BPB approximation $g_\epsilon$ and the corresponding constant is $\delta(\epsilon).$ Clearly, $x\in M_f.$ Hence, $|f(x)|>1-\delta(\epsilon).$ Therefore, there exists $z\in M_{g_\epsilon}$ such that $\|z-x\|<\epsilon<\epsilon_0.$ Now, there exists a face $F$ of $B_{\mathbb{X}}$ such that $z\in \text{int}_r(F).$ It is easy to observe that $F\subseteq M_{g_\epsilon}.$ If $x\notin F,$ then $\|x-z\|\geq d(x,F)>\epsilon_0,$ a contradiction. Therefore, $x\in F,$ i.e., $x\in M_{g_\epsilon}.$ Since $g_\epsilon$ attains its norm at a vector in $\text{int}_r(M),$ it is easy to observe that $M\subseteq M_{g_\epsilon}.$ Now, $|g_\epsilon(x)|=1$ and $\|f-g_\epsilon\|<\epsilon$ implies that $g_\epsilon(x)=1.$ Hence, $f$ and $g_\epsilon$ both supports the same facet of $B_{\mathbb{X}}.$ This gives that $f=g_\epsilon,$ which contradicts that $g_\epsilon$ is a non-trivial uniform $\epsilon-$BPB approximation of $f.$ This completes the proof of the theorem.   
\end{proof}

Combining above two theorems we can characterize extreme linear functionals on a finite-dimensional polyhedral Banach space in terms of uniform $\epsilon-$BPB approximations.

\begin{theorem}\label{th-funec}
		Let $\mathbb{X}$ be a finite-dimensional polyhedral Banach space. Let $f\in S_{\mathbb{X}^*}.$ Then the following are equivalent:\\
		(i) $f$ is an extreme contraction.\\
		(ii) There exists $\epsilon_0>0$ such that for each $0<\epsilon<\epsilon_0,$ $f$ does not admit a non-trivial uniform $\epsilon-$BPB approximation.\\
		(iii) For any $\epsilon>0,$ $f$ admits a uniform $\epsilon-$BPB approximation which is an extreme contraction.
\end{theorem}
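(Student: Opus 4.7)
The plan is to split the theorem into the two equivalences $(i) \Leftrightarrow (ii)$ and $(i) \Leftrightarrow (iii)$, leveraging Theorems \ref{th-polyextreme} and \ref{th-notextreme} wherever possible, and supplying a short independent argument only for $(iii) \Rightarrow (i)$.

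For $(i) \Leftrightarrow (ii)$, the forward implication is contained in Theorem \ref{th-polyextreme}, which in fact gives the stronger statement that $f$ is the only uniform $\epsilon$-BPB approximation of itself for $0 < \epsilon < \epsilon_0$. For the reverse implication, I would argue by contrapositive: assume $f$ is not an extreme contraction. Any finite-dimensional Banach space is automatically reflexive and Kadets-Klee (weak and norm convergence coincide in finite dimensions), and every linear functional on a finite-dimensional space is compact. Consequently, Theorem \ref{th-notextreme} applies to $f$ (with $\mathbb{Y}$ the scalar field), and it produces, for every $\epsilon > 0$, a non-trivial uniform $\epsilon$-BPB approximation of $f$, contradicting $(ii)$.

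The implication $(i) \Rightarrow (iii)$ is essentially trivial: $f$ itself serves as a uniform $\epsilon$-BPB approximation of $f$ for every $\epsilon > 0$ (in the finite-dimensional case, this is immediate from Theorem \ref{th-appnorm}), and $f$ is already an extreme contraction by hypothesis. For $(iii) \Rightarrow (i)$, I would once more argue by contrapositive. The key geometric ingredient is that when $\mathbb{X}$ is a finite-dimensional polyhedral Banach space, the dual ball $B_{\mathbb{X}^*}$ is also a polytope, so the set $\mathcal{E}$ of extreme points of $B_{\mathbb{X}^*}$ is finite. If $f \in S_{\mathbb{X}^*}$ is not an extreme contraction, then $f \notin \mathcal{E}$, and hence
\[
d \;:=\; \min\bigl\{\, \|f-g\| : g \in \mathcal{E} \,\bigr\} \;>\; 0.
\]
By Definition \ref{def-uniform}, any uniform $\epsilon$-BPB approximation $g$ of $f$ must satisfy $\|f - g\| < \epsilon$. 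Choosing any $\epsilon$ with $0 < \epsilon < d$ then forces every uniform $\epsilon$-BPB approximation of $f$ to lie outside $\mathcal{E}$, i.e., to fail to be an extreme contraction, which contradicts $(iii)$.

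I expect the only step requiring a fresh thought to be $(iii) \Rightarrow (i)$, whose backbone is the finiteness of $\mathcal{E}$ together with the polyhedral structure of $B_{\mathbb{X}^*}$. The other implications are essentially bookkeeping: they reduce to composing Theorems \ref{th-polyextreme} and \ref{th-notextreme} with the observation that reflexivity, the Kadets-Klee property, and compactness are all automatic in the finite-dimensional setting.
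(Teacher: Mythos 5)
Your proposal is correct and follows essentially the same route as the paper: the equivalence $(i)\Leftrightarrow(ii)$ is obtained by combining Theorem \ref{th-polyextreme} with Theorem \ref{th-notextreme} (applied with $\mathbb{Y}=\mathbb{R}$, using that a finite-dimensional space is reflexive and Kadets-Klee), $(i)\Rightarrow(iii)$ is the trivial observation that $f$ approximates itself, and $(iii)\Rightarrow(i)$ uses the finiteness of the set of extreme contractions in $\mathbb{X}^*$ to pick $\epsilon$ smaller than the distance from $f$ to that set. The paper's proof is identical in structure and in all essential details.
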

\begin{proof}
	We note that the equivalence of $(i)$ and $(ii)$ follows from Theorem \ref{th-notextreme} and Theorem \ref{th-polyextreme}. Clearly $f$ is a uniform $\epsilon-$BPB approximation of itself by Definition \ref{def-uniform}, and therefore it follows that $(i)\Rightarrow (iii).$ We only prove that $(iii)\Rightarrow (i).$ Suppose $(iii)$ holds. Since $\mathbb{X}$ is a finite-dimensional polyhedral Banach space, the number of extreme contractions in $\mathbb{X}^*$ is finite. If possible, suppose that $f$ is not an extreme contraction. Choose
$$0<\epsilon< \min\{\|f-g\|:g~\text{is~an~extreme~contraction~in~}\mathbb{X}^*\}.$$
Now, for this $\epsilon,$ $f$ admits a uniform $\epsilon-$BPB approximation, say $g,$ which is an extreme contraction. Therefore, $\|f-g\|<\epsilon.$ This clearly contradicts the choice of $\epsilon.$ Therefore, we must have that $f$ is an extreme contraction. Thus, $(i)$ holds. This completes the proof of the theorem.
\end{proof}

 We invite the readers to compare and contrast the following theorem with Theorem \ref{th-polyextreme} and Theorem \ref{th-funec}. In particular, this shows that strict convexity plays an important role in the study of uniform $\epsilon-$BPB approximations.

\begin{theorem}\label{th-lin}
	Let $\mathbb{X}$ be a reflexive, Kadets-Klee, strictly convex Banach space such that $\mathbb{X}^*$ is a Kadets-Klee Banach space. Then for each $f\in S_{\mathbb{X}^*}$ and for each $\epsilon>0,$ $f$ admits a non-trivial uniform $\epsilon-$BPB approximation.
\end{theorem}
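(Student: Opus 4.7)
The plan is to locate $g\in S_{\mathbb{X}^*}$ arbitrarily close to but distinct from $f$ whose norm attainment set $M_g$ sits within $\epsilon/2$ of $M_f,$ and then to transfer Theorem \ref{th-appnorm} applied to $f$ into the uniform $\epsilon$-BPB conclusion for $g.$ Since $\mathbb{X}$ is reflexive, every $h\in S_{\mathbb{X}^*}$ attains its norm, and strict convexity of $\mathbb{X}$ forces $M_h$ to be a single antipodal pair $\{\pm y_h\}$: if $h(y_1)=h(y_2)=1$ with $y_1,y_2\in S_{\mathbb{X}},$ then $h\bigl(\tfrac{y_1+y_2}{2}\bigr)=1,$ so $\bigl\|\tfrac{y_1+y_2}{2}\bigr\|=1$ and strict convexity gives $y_1=y_2.$ Write $M_f=\{\pm x_0\},$ and for each $h\in S_{\mathbb{X}^*}$ denote by $y_h$ the unique element of $M_h$ with $h(y_h)=1.$

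The heart of the argument is the following continuity lemma: whenever $g_n\in S_{\mathbb{X}^*}$ with $g_n\to f$ in norm, one has $y_{g_n}\to x_0$ in $\mathbb{X}.$ By reflexivity, some subsequence $y_{g_{n_k}}$ converges weakly to some $y\in B_{\mathbb{X}}.$ From $|g_{n_k}(y_{g_{n_k}})-f(y_{g_{n_k}})|\le\|g_{n_k}-f\|\to 0$ together with $g_{n_k}(y_{g_{n_k}})=1,$ we deduce $f(y_{g_{n_k}})\to 1;$ weak convergence then yields $f(y)=1,$ forcing $\|y\|=1$ and $y=x_0.$ The Kadets-Klee property of $\mathbb{X}$ now promotes $y_{g_{n_k}}\stackrel{w}{\rightharpoonup} x_0$ with $\|y_{g_{n_k}}\|=\|x_0\|=1$ to norm convergence, and the standard subsequence principle upgrades this to norm convergence of the entire sequence.

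Now apply Theorem \ref{th-appnorm} to the compact operator $f$ to obtain $\eta=\eta(\epsilon)>0$ with $M_f(\delta)\subseteq B(x_0,\epsilon/2)\cup B(-x_0,\epsilon/2)$ for every $0<\delta\le\eta.$ Fix any $h\in S_{\mathbb{X}^*}$ linearly independent of $f$ and set $g_t=((1-t)f+th)/\|(1-t)f+th\|;$ since the denominator tends to $1$ as $t\to 0^+,$ we have $g_t\in S_{\mathbb{X}^*}\setminus\{f\}$ and $g_t\to f$ in norm, so for all sufficiently small $t>0$ both $\|g_t-f\|<\epsilon$ and, by the continuity lemma, $\|y_{g_t}-x_0\|<\epsilon/2$ hold. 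Consequently
\[
M_f(\eta)\subseteq B(x_0,\epsilon/2)\cup B(-x_0,\epsilon/2)\subseteq B(y_{g_t},\epsilon)\cup B(-y_{g_t},\epsilon)=\bigcup_{u\in M_{g_t}}B(u,\epsilon),
\]
so with $\delta(\epsilon):=\eta$ the functional $g_t$ is a non-trivial uniform $\epsilon$-BPB approximation of $f.$

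The main obstacle is the continuity lemma: reflexivity and strict convexity identify the weak cluster points of $\{y_{g_n}\}$ as lying in $M_f,$ but the essential step of converting weak convergence into norm convergence along the unit sphere is exactly the content of the Kadets-Klee hypothesis on $\mathbb{X}.$ The Kadets-Klee property of $\mathbb{X}^*$ does not appear to enter this particular argument and is presumably carried along for symmetry with dual statements elsewhere in the paper.
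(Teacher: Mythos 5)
Your proof is correct, and it takes a genuinely different route from the paper's. The paper splits into two cases according to whether the (unique, by reflexivity and strict convexity) norming point $x_0$ of $f$ is a smooth point of $\mathbb{X}$: if it is not, the approximant is chosen inside $J(x_0)$ itself, so that its norm attainment set is still $\{\pm x_0\}$ and no duality enters; if it is, the paper perturbs the norming point to nearby $x_n\in S_{\mathbb{X}}$, takes supporting functionals $f_n\in J(x_n)$, shows $f_n\stackrel{w}{\rightharpoonup}f$ with $\|f_n\|\to\|f\|$, and only then invokes the Kadets-Klee property of $\mathbb{X}^*$ to upgrade this to norm convergence $f_n\to f$. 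You instead perturb the functional directly along any direction off the line $\mathbb{R}f$ and prove a single continuity lemma, namely that the norm-attainment map $h\mapsto y_h$ is norm-to-norm continuous at $f$, using only reflexivity, strict convexity and the Kadets-Klee property of $\mathbb{X}$ (plus Eberlein--\v{S}mulian for weak sequential compactness, which the paper also uses tacitly). This removes the case split and, as you correctly observe, shows that the hypothesis that $\mathbb{X}^*$ be Kadets-Klee is superfluous for this statement --- a genuine, if modest, sharpening of the theorem. The only thing the paper's argument delivers that yours does not is that in the non-smooth case its approximant preserves the norm attainment set exactly, but the theorem does not ask for that. One shared blind spot worth noting: both arguments implicitly need $\dim\mathbb{X}\geq 2$ (you require an $h$ linearly independent of $f$; the paper requires unit vectors $x_n\neq x$ converging to $x$), and the statement is in fact false for $\mathbb{X}=\mathbb{R}$, where the only candidate approximant $-f$ is at distance $2$ from $f$.
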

\begin{proof}
	Since $\mathbb{X}$ is a reflexive Banach space, $M_f\neq \emptyset.$ Using the strict convexity of $\mathbb{X},$ it is easy to observe that $M_f=\{\pm x\}$ for some $x\in S_{\mathbb{X}}.$ Without loss of generality, we may assume that $f$ is a supporting linear functional of $x,$ i.e., $f\in J(x).$ \\
	First suppose that $x$ is non-smooth. Then there exists $g\in J(x)$ such that $g\neq f.$ Since $J(x)$ is convex, $(1-t)f+tg\in J(x)$ for all $t\in [0,1].$ Clearly, there exists $t_\epsilon\in(0,1)$ such that $\|f-\{(1-t_\epsilon)f+t_\epsilon g\}\|<\epsilon.$ Let $g_\epsilon=(1-t_\epsilon)f+t_\epsilon g.$ Then $\|f-g_\epsilon\|<\epsilon$ and $M_{g_\epsilon}=\{\pm x\}.$ Now, by Theorem \ref{th-appnorm}, we have  $\delta(\epsilon)>0$ such that  $M_f(\delta(\epsilon))\subseteq B(x,\epsilon)\cup B(-x,\epsilon).$ Suppose $z\in S_\mathbb{X}$ satisfies $|f(z)|>1-\delta(\epsilon).$ Then $z\in M_f(\delta(\epsilon)).$ Now, either $z\in B(x,\epsilon)$ or $z\in B(-x,\epsilon).$ Moreover, $\pm x\in M_{g_\epsilon}.$ Thus, if $x$ is non-smooth, then $g_\epsilon$ is a non-trivial uniform $\epsilon-$BPB approximation of $f.$ \\
	Now, suppose that $x$ is smooth. Then $J(x)=\{f\}.$ Choose a sequence $\{x_n\}\subseteq S_{\mathbb{X}}$ such that $x_n\to x$ and $x_n\neq x$ for each $n.$ Suppose that $f_n$ is a supporting linear functional of $x_n$ for each $n.$ Since $\mathbb{X}^*$ is reflexive, $B_{\mathbb{X}^*}$ is weakly compact. Therefore, $\{f_n\}$ has a weakly convergent subsequence in $B_{\mathbb{X}^*}.$ Without loss of generality, we may assume that $\{f_n\}$ weakly converges to $g\in B_{\mathbb{X}^*}.$  Now,
	\begin{eqnarray*}
	|g(x)-1|=|g(x)-f_n(x_n)|&\leq &|g(x)-f_n(x)|+|f_n(x)-f_n(x_n)|\\
	&\leq & |g(x)-f_n(x)|+\|f_n\|\|x-x_n\|\\
	\Rightarrow |g(x)-1|&\leq &0, (~\text{taking~limit~}n\to\infty).
	\end{eqnarray*}
    Thus, $g(x)=1,$ i.e., $\|g\|=1$ and $g\in J(x).$ Therefore, $g=f$ and $f_n\stackrel{w}{\rightharpoonup}f.$ Clearly, $\|f_n\|\to \|f\|.$ Thus, we have $f_n\to f,$ since $\mathbb{X}^*$ is Kadets-Klee. Now, for $\epsilon>0,$ there exists $k\in \mathbb{N}$ such that $\|f_k-f\|<\epsilon$ and $\|x_k-x\|<\frac{\epsilon}{2}.$ Using Theorem \ref{th-appnorm}, we get $\delta(\epsilon)>0$ such that $M_f(\delta(\epsilon))\subseteq B(x,\frac{\epsilon}{2})\cup B(-x,\frac{\epsilon}{2}).$ Now, let $z\in S_{\mathbb{X}}$ such that $|f(z)|>1-\delta(\epsilon).$ Then $z\in M_f(\delta(\epsilon))$ implies that either $\|z-x\|<\frac{\epsilon}{2}$ or $\|z+x\|<\frac{\epsilon}{2}.$ Suppose that $\|z-x\|<\frac{\epsilon}{2}.$ Then $\|z-x_k\|\leq\|z-x\|+\|x-x_k\|<\epsilon.$ Similarly, $\|z+x\|<\frac{\epsilon}{2}$ gives that $\|z+x_k\|<\epsilon.$ However, $\pm x_k\in M_{f_k}$ and $\pm x_k\neq x$ implies that $f(x_k)\neq 1$ and $f(-x_k)\neq 1,$ i.e., $f_k\neq f.$  Thus, $f_k$ is a non-trivial uniform $\epsilon-$BPB approximation of $f,$ when $x$ is smooth. This completes the proof of the theorem.
\end{proof}

We record the following interesting observation in the form of a corollary to the above result.

\begin{cor}
	Let $\mathbb{X}$ be a uniformly smooth, Kadets-Klee, strictly convex Banach space. Then for each $f\in S_{\mathbb{X}^*}$ and for each $\epsilon>0,$ $f$ admits a non-trivial uniform $\epsilon-$BPB approximation.
\end{cor}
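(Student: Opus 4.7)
The plan is to show that the hypotheses of the Corollary imply the hypotheses of Theorem \ref{th-lin}, and then simply invoke that theorem. The hypotheses of Theorem \ref{th-lin} that need to be verified are: (a) $\mathbb{X}$ is reflexive; (b) $\mathbb{X}$ is Kadets-Klee; (c) $\mathbb{X}$ is strictly convex; (d) $\mathbb{X}^*$ is Kadets-Klee. Items (b) and (c) are given by hypothesis, so only (a) and (d) require a short verification.

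For (a), I would recall the classical fact that every uniformly smooth Banach space is reflexive. The standard route is via duality with uniform convexity: since $\mathbb{X}$ is uniformly smooth, its dual $\mathbb{X}^*$ is uniformly convex, and uniformly convex spaces are reflexive by the Milman--Pettis theorem; reflexivity of $\mathbb{X}^*$ implies reflexivity of $\mathbb{X}$. This duality relation between $\rho_{\mathbb{X}}$ and $\delta_{\mathbb{X}^*}$ is a textbook computation and may be cited.

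For (d), the key observation is that a uniformly convex Banach space is automatically Kadets-Klee. Indeed, if $\{f_n\}\subseteq \mathbb{X}^*$ converges weakly to $f$ with $\|f_n\|\to\|f\|$, one may assume $f\neq 0$, normalize to obtain sequences on the unit sphere, and use uniform convexity to estimate $\|f_n - f\|$: writing $g_n = f_n/\|f_n\|$ and $g = f/\|f\|$, one has $g_n \stackrel{w}{\rightharpoonup} g$, hence $\|g_n+g\|/2\to 1$ (since the weak limit of $(g_n+g)/2$ is $g$ and $\|g\|=1$ provides a lower bound via a supporting functional), and uniform convexity forces $\|g_n-g\|\to 0$. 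Combined with $\|f_n\|\to\|f\|$ this gives $f_n\to f$ in norm. Since we have already established from (a) that $\mathbb{X}^*$ is uniformly convex, it follows that $\mathbb{X}^*$ is Kadets-Klee.

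With (a)--(d) in place, the conclusion follows directly from Theorem \ref{th-lin} applied to $f$ and $\epsilon$. The main conceptual step is the correct invocation of the duality between uniform smoothness of $\mathbb{X}$ and uniform convexity of $\mathbb{X}^*$; no new estimates beyond those already used in Theorem \ref{th-lin} are needed. I do not anticipate a genuine obstacle here, as the argument is essentially a dictionary lookup of well-known duality and Kadets-Klee facts, followed by citation of the preceding theorem.
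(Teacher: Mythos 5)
Your proposal is correct and follows exactly the paper's own argument: uniform smoothness of $\mathbb{X}$ gives reflexivity and uniform convexity of $\mathbb{X}^*$, uniform convexity gives the Kadets-Klee property, and then Theorem \ref{th-lin} applies. The extra detail you supply for the classical duality and Kadets-Klee facts is fine but not needed beyond citation.
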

\begin{proof}
	Since $\mathbb{X}$ is uniformly smooth, $\mathbb{X}$ is reflexive and $\mathbb{X}^*$ is uniformly convex. Thus, $\mathbb{X}^*$ is a Kadets-Klee Banach space. Therefore, $\mathbb{X},$ $\mathbb{X}^*$ satisfies the hypothesis of Theorem \ref{th-lin} and hence the result follows from Theorem \ref{th-lin}. 
\end{proof}

The next theorem and its corollary show that the norm attainment set of an operator and that of its uniform $\epsilon-$BPB approximation cannot be far apart, under certain additional conditions.

\begin{theorem}\label{th-ball}
	Let $\mathbb{X},\mathbb{Y}$ be Banach spaces. Let $T\in S_{\mathbb{L}(\mathbb{X},\mathbb{Y})}$ be such that $M_T\neq \emptyset.$ Suppose $\sup\{\|Tz\|:z\in C\}<1$ holds for each closed subset $C$ of $S_{\mathbb{X}}$ with $d(M_T,C)>0.$ Then given any $\delta>0,$ there exists $\epsilon_0>0$ such that for $0<\epsilon<\epsilon_0$ if $A_{\epsilon}$ is a uniform $\epsilon-$BPB approximation of $T,$ then 
	$$M_{A_\epsilon}\subseteq \cup_{x\in M_T}B(x,\delta).$$
\end{theorem}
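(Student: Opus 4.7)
The plan is to exploit the hypothesis directly by choosing, as the test set $C$, the complement of the very neighborhood appearing in the conclusion. Given $\delta>0$, I would introduce
$$C_\delta := S_{\mathbb{X}} \setminus \bigcup_{x \in M_T} B(x,\delta),$$
which is closed in $\mathbb{X}$ (being the intersection of the closed sphere $S_{\mathbb{X}}$ with the complement of an open set) and satisfies $d(M_T, C_\delta) \geq \delta > 0$ by construction. If $C_\delta$ is empty, the desired inclusion $M_{A_\epsilon} \subseteq \bigcup_{x \in M_T} B(x,\delta)$ is automatic for any $\epsilon$, since $M_{A_\epsilon} \subseteq S_{\mathbb{X}}$; so I may assume $C_\delta \neq \emptyset$.

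With that set up, the hypothesis of the theorem applied to $C_\delta$ yields a positive gap: the quantity $\alpha := \sup\{\|Tz\| : z \in C_\delta\}$ satisfies $\alpha < 1$. I would then set $\epsilon_0 := 1 - \alpha > 0$. For any $0 < \epsilon < \epsilon_0$ and any uniform $\epsilon-$BPB approximation $A_\epsilon$ of $T$, Definition \ref{def-uniform} guarantees $\|T-A_\epsilon\|<\epsilon$, so for every $u \in M_{A_\epsilon}$ the reverse triangle inequality gives
$$\|Tu\| \;\geq\; \|A_\epsilon u\| - \|T-A_\epsilon\| \;>\; 1 - \epsilon \;>\; 1 - \epsilon_0 \;=\; \alpha.$$
Hence $u$ cannot lie in $C_\delta$, i.e.\ $u \in \bigcup_{x \in M_T} B(x,\delta)$, which is precisely the claimed inclusion.

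The whole argument is essentially a compactness-free contrapositive: the hypothesis converts ``far from $M_T$'' into ``$\|T\cdot\|$ bounded strictly below $1$'', and the definition of uniform $\epsilon-$BPB approximation transfers norm-attainment of $A_\epsilon$ into approximate norm-attainment of $T$. I do not expect any real obstacle here; the only points requiring care are verifying that $C_\delta$ is closed and handling the trivial case $C_\delta = \emptyset$, both of which are minor. The cleanliness of the proof reflects that the hypothesis has been formulated precisely to make this kind of localization possible.
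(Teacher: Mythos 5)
Your proof is correct and follows essentially the same route as the paper: take $C=S_{\mathbb{X}}\setminus\cup_{x\in M_T}B(x,\delta)$, use the hypothesis to get a gap $\sup_{z\in C}\|Tz\|<1$, and conclude via $\|Tu\|\geq\|A_\epsilon u\|-\|T-A_\epsilon\|>1-\epsilon$ for $u\in M_{A_\epsilon}$. The only cosmetic difference is that the paper picks $\epsilon_0<\min\{\mu,\delta\}$ with $\sup_{z\in C}\|Tz\|<1-\mu$ rather than your sharper $\epsilon_0=1-\alpha$, and you additionally (and correctly) handle the trivial case $C=\emptyset$.
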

\begin{proof}
	Let $\delta>0$ be given. Suppose $C=S_\mathbb{X}\setminus \cup_{x\in M_T}B(x,\delta).$ Then clearly $d(M_T,C)>0.$ So by hypothesis, $\sup\{\|Tz\|:z\in C\}<1.$ There exists $\mu>0$ such that $\sup\{\|Tz\|:z\in C\}<1-\mu.$ Choose $0<\epsilon_0<\min\{\mu,\delta\}.$ Suppose $0<\epsilon<\epsilon_0$ and $A_{\epsilon}$ is a uniform $\epsilon-$BPB approximation of $T.$ Let $z\in M_{A_\epsilon}.$ Then 
	\begin{eqnarray*}
		|\|Tz\|-\|A_\epsilon z\||&\leq & \|Tz-A_\epsilon z\|\leq \|T-A_\epsilon\|<\epsilon\\
		\Rightarrow |\|Tz\|-1|&<&\epsilon<\mu\\
		\Rightarrow 1-\|Tz\|&<& \mu\\
		\Rightarrow 1-\mu&<& \|Tz\|.
	\end{eqnarray*}
	This clearly implies that $z\notin C,$ i.e., $z\in \cup_{x\in M_T}B(x,\delta).$ Thus, $M_{A_\epsilon}\subseteq \cup_{x\in M_T}B(x,\delta).$ This completes the proof of the theorem.   
\end{proof}

\begin{cor}\label{cor-maepsilon}
	Let $\mathbb{X},\mathbb{Y}$ be Banach spaces such that $dim(\mathbb{X})<\infty.$ Let $T\in S_{\mathbb{L}(\mathbb{X},\mathbb{Y})}.$ Then given any $\delta>0,$ there exists $\epsilon_0>0$ such that for $0<\epsilon<\epsilon_0$ if $A_{\epsilon}$ is a uniform $\epsilon-$BPB approximation of $T,$ then 
	$$M_{A_\epsilon}\subseteq \cup_{x\in M_T}B(x,\delta).$$
\end{cor}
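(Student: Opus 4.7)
The plan is to derive this corollary as an immediate consequence of Theorem \ref{th-ball}, so the work amounts to checking that the hypotheses of that theorem are automatically satisfied when $\mathbb{X}$ is finite-dimensional. There are two things to verify: that $M_T \neq \emptyset$, and that $\sup\{\|Tz\|:z\in C\}<1$ for every closed subset $C$ of $S_\mathbb{X}$ with $d(M_T,C)>0$.

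First I would note that since $\dim(\mathbb{X})<\infty$, the unit sphere $S_\mathbb{X}$ is compact. The map $z\mapsto\|Tz\|$ is continuous on $S_\mathbb{X}$, so it attains its supremum, which equals $\|T\|=1$. Hence $M_T\neq \emptyset$, as required.

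Next, let $C\subseteq S_\mathbb{X}$ be closed with $d(M_T,C)>0$. Being a closed subset of the compact set $S_\mathbb{X}$, $C$ is itself compact, so $z\mapsto \|Tz\|$ attains its supremum on $C$ at some point $z_0\in C$. The hypothesis $d(M_T,C)>0$ forces $z_0\notin M_T$, so $\|Tz_0\|<\|T\|=1$, and therefore $\sup\{\|Tz\|:z\in C\}=\|Tz_0\|<1$. With both hypotheses verified, Theorem \ref{th-ball} applies directly and yields the required conclusion.

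Since both verifications are essentially exercises in exploiting compactness of $S_\mathbb{X}$, there is no serious obstacle here; the only point worth emphasizing in the writeup is that the strict inequality $\|Tz_0\|<1$ must come from $z_0\notin M_T$ together with the fact that the supremum is actually attained (which in turn relies on the compactness afforded by the finite-dimensional hypothesis).
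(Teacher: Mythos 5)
Your proposal is correct and follows essentially the same route as the paper: both deduce the corollary from Theorem \ref{th-ball} by using compactness of $S_\mathbb{X}$ to get $M_T \neq \emptyset$ and to show that $\sup\{\|Tz\|:z\in C\}<1$ for closed $C$ with $d(M_T,C)>0$. Your writeup simply makes explicit the step the paper leaves implicit, namely that the supremum over the compact set $C$ is attained at a point outside $M_T$.
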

\begin{proof}
	Since $dim(\mathbb{X})<\infty,~M_T\neq \emptyset.$ Suppose that $C$ is a closed subset of $S_{\mathbb{X}}$ such that $d(M_T,C)>0.$ Clearly, $C$ is a compact set since $S_{\mathbb{X}}$ is compact. Therefore, $\sup\{\|Tz\|:z\in C\}<1.$ Now, the result follows from Theorem \ref{th-ball}
\end{proof}

We now prove that exact preservation of the norm attainment set of an operator is possible under uniform $\epsilon-$BPB approximations, provided that the domain space is a finite-dimensional polyhedral Banach space.

\begin{theorem}
	Let $\mathbb{X}$ be a finite-dimensional polyhedral Banach space and let $\mathbb{Y}$ be a Banach space. Let $T\in S_{\mathbb{L}(\mathbb{X},\mathbb{Y})}.$ Then there exists $\epsilon_0>0$ such that for any $0<\epsilon<\epsilon_0,$ if $A_\epsilon$ is a uniform $\epsilon-$BPB approximation of $T,$ then $M_T=M_{A_\epsilon}.$ 	
\end{theorem}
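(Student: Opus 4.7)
The plan is to establish the two inclusions $M_{A_\epsilon}\subseteq M_T$ and $M_T\subseteq M_{A_\epsilon}$ separately, both resting on the structural fact that in a finite-dimensional polyhedral Banach space the norm attainment set of any norm-one operator is a union of (proper) faces of $B_\mathbb{X}$. I would first record this fact: if $S\in S_{\mathbb{L}(\mathbb{X},\mathbb{Y})}$ and $x\in M_S$ lies in the relative interior of a face $F$ of $B_\mathbb{X}$, then choosing $\phi\in S_{\mathbb{Y}^*}$ with $\phi(Sx)=1$ gives $\phi\circ S\in S_{\mathbb{X}^*}$ attaining its norm at a relatively interior point of $F$, so by linearity $\phi\circ S\equiv 1$ on $F$, forcing $F\subseteq M_S$.

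For the inclusion $M_{A_\epsilon}\subseteq M_T$, I would invoke Corollary \ref{cor-maepsilon}: given any $\delta>0$ there exists $\epsilon^{(1)}(\delta)>0$ such that $M_{A_\epsilon}\subseteq \bigcup_{x\in M_T}B(x,\delta)$ for all $0<\epsilon<\epsilon^{(1)}(\delta)$. Since $B_\mathbb{X}$ has only finitely many proper faces, the collection $\mathcal{G}$ of those faces $G$ with $G\not\subseteq M_T$ is finite, and each such $G$ is compact with $G\setminus M_T\neq\emptyset$, so $d_G:=\sup_{y\in G}d(y,M_T)>0$. Choosing $\delta<\min_{G\in\mathcal{G}}d_G$ ensures that no face in $\mathcal{G}$ can be contained in the $\delta$-neighborhood of $M_T$. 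Since $M_{A_\epsilon}$ is itself a union of faces of $B_\mathbb{X}$, each of its constituent faces must lie in $M_T$.

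For the reverse inclusion $M_T\subseteq M_{A_\epsilon}$, I would write $M_T=\bigcup_{i=1}^{k}F_i$ as the union of its maximal faces, and for each $i$ pick a distinguished point $x_i\in\text{int}_r F_i$. The local polyhedral structure at $x_i$ yields $\eta_i>0$ such that any $u\in S_\mathbb{X}$ with $\|u-x_i\|<\eta_i$ lies in the relative interior of a face $G$ of $B_\mathbb{X}$ containing $F_i$; indeed, any face not containing $F_i$ fails to contain $x_i$ (if $x_i\in F_i\cap G$ with $x_i\in\text{int}_r F_i$ and $F_i\cap G$ a face of $B_\mathbb{X}$, then $F_i\cap G=F_i$, so $F_i\subseteq G$), and by finiteness such faces stay a uniform positive distance from $x_i$. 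Since $x_i\in M_T$, the uniform $\epsilon$-BPB property produces $u_i\in M_{A_\epsilon}$ with $\|u_i-x_i\|<\epsilon$; for $\epsilon<\min_i\eta_i$, the minimal face $G_i$ containing $u_i$ satisfies $F_i\subseteq G_i$, and because $M_{A_\epsilon}$ is a union of faces, $G_i\subseteq M_{A_\epsilon}$. Hence $F_i\subseteq M_{A_\epsilon}$ for each $i$, giving $M_T\subseteq M_{A_\epsilon}$.

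Setting $\epsilon_0$ to be the minimum of the thresholds produced above completes the proof. The main obstacle I anticipate is obtaining the correct uniform polytope-geometric estimates: ensuring that a sufficiently thin tubular neighborhood of $M_T$ cannot engulf any face of $B_\mathbb{X}$ not already in $M_T$, and that a sufficiently small neighborhood of each chosen $x_i$ meets only faces containing $F_i$. Both are natural consequences of polytope combinatorics, but each requires careful extraction of a uniform constant from the finite collection of faces.
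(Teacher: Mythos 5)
Your proof is correct and follows essentially the same route as the paper: one inclusion via Corollary \ref{cor-maepsilon} together with the finiteness of the face lattice, the other via the uniform $\epsilon$-BPB definition applied at distinguished relative-interior points, with $\epsilon_0$ extracted as a minimum of finitely many positive polytope-geometric constants. The only real difference is presentational: you prove explicitly (via a supporting functional $\phi\circ S$) that the norm attainment set is a union of faces, a fact the paper uses as an ``easy to observe'' step, and for the inclusion $M_{A_\epsilon}\subseteq M_T$ you measure $\sup_{y\in G}d(y,M_T)$ over faces $G\not\subseteq M_T$ rather than the paper's pairwise distances $d(x_i,F_j)$; both yield the same uniform threshold.
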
		
\begin{proof}
	$\mathbb{X}$ being finite-dimensional polyhedral Banach space, the number of all faces of $B_{\mathbb{X}}$ is finite. Let $\{F_i:1\leq i\leq n\}$ be the collection of all faces of $B_{\mathbb{X}}.$ For each $1\leq i\leq n,$ choose $x_i\in \text{int}_r(F_i).$ Choose
	\begin{eqnarray}\label{delta}
	0<\delta<\frac{1}{2}\min \{d(x_i,F_j):1\leq i,j\leq n, x_i\notin F_j\}.
	\end{eqnarray}
	Using Corollary \ref{cor-maepsilon}, we say that for this $\delta>0,$ there exists $\mu>0$ such that for each $0<\epsilon<\mu,$ if $A_\epsilon$ if a uniform $\epsilon-$BPB approximation of $T,$ then the following holds:
	\begin{eqnarray}\label{eq2}
	M_{A_\epsilon}\subseteq \cup_{z\in M_T}B(z,\delta).
	\end{eqnarray}
	Choose $0<\epsilon_0<\min\{\mu,\delta\}.$ Let $0<\epsilon<\epsilon_0.$ Suppose that $A_\epsilon$ is a uniform $\epsilon-$BPB approximation of $T$ and $\delta(\epsilon)$ is the corresponding constant. We show that $M_T=M_{A_\epsilon}$ in the following two steps.\\
	
	\textbf{Step 1:} $M_T\subseteq M_{A_\epsilon}.$\\
	Let $x\in M_T.$ Then there exists a face $F_i$ of $B_{\mathbb{X}}$ such that $x\in F_i\subseteq M_T.$ Clearly, $x_i\in M_T.$ So $\|Tx_i\|>1-\delta(\epsilon).$  Hence, there exists $z\in M_{A_\epsilon}$ such that $\|z-x_i\|<\epsilon.$ Now, there exists a face $F_j$ of $B_\mathbb{X}$ such that $z\in \text{int}_r(F_j).$ Since $z\in M_{A_\epsilon},$ we have $F_j\subseteq M_{A_\epsilon}.$ If $x_i\notin F_j,$ then from (\ref{delta}) we get $\|x_i-z\|\geq d(x_i,F_j)>2\delta>2\epsilon,$ a contradiction. Therefore, $x_i\in F_j,$ i.e., $x_i\in M_{A_\epsilon}.$ Moreover, $x_i\in \text{int}_r(F_i).$ Thus, $F_i\subseteq M_{A_\epsilon},$ i.e., $x\in M_{A_\epsilon}.$ This completes the proof of Step 1.\\
	
	\textbf{Step 2:} $M_{A_\epsilon}\subseteq M_T.$\\
	Let $x\in M_{A_\epsilon}.$ Then there exists a face $F_i$ of $B_{\mathbb{X}}$ such that $x\in F_i\subseteq M_{A_\epsilon}.$ Clearly, $x_i\in M_{A_\epsilon}.$ Using (\ref{eq2}), we get $z\in M_T$ such that $x_i\in B(z,\delta).$ Now, there exists a face $F_j$ of $B_\mathbb{X}$ such that $z\in \text{int}_r(F_j).$ Then clearly, $F_j\subseteq M_T.$ If $x_i\notin F_j,$ then from (\ref{delta}) we have $\|x_i-z\|\geq d(x_i,F_j)>2\delta,$ a contradiction. Therefore, $x_i\in F_j,$ i.e., $x_i\in M_T.$ Moreover, $x_i\in \text{int}_r(F_i).$ Thus, $F_i\subseteq M_T,$ i.e., $x\in M_T.$ This completes the proof of Step 2.
	
	Combining Step 1 and Step 2, we get $M_T=M_{A_\epsilon}.$ This completes the proof of the theorem.
\end{proof}	

We make note of the following proposition which should be compared with Theorem \ref{th-notextreme}.

\begin{prop}\label{th-linextreme}
	Let $\mathbb{X},\mathbb{Y}$ be Banach spaces such that there exist only finitely many extreme contractions in $\mathbb{L}(\mathbb{X},\mathbb{Y}).$ Let $T\in S_{\mathbb{L}(\mathbb{X},\mathbb{Y})}$ be such that $T$ is not an extreme contraction. Then there exists $\epsilon_0>0$ such that for any $0<\epsilon<\epsilon_0,$ if $A_\epsilon$ is a uniform $\epsilon-$BPB approximation of $T,$ then $A_\epsilon$ is not an extreme contraction. 
\end{prop}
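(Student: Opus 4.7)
The plan is to adapt the argument used for the implication $(iii)\Rightarrow(i)$ in Theorem \ref{th-funec}, where an analogous statement is obtained for functionals on a finite-dimensional polyhedral Banach space. The essential input there was only the finiteness of the collection of extreme contractions together with the norm-closeness $\|T-A_\epsilon\|<\epsilon$ enforced by Definition \ref{def-uniform}. Both ingredients are available in the present setting.

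More precisely, I would proceed by contradiction via an explicit choice of $\epsilon_0$. Let $\mathcal{E}\subset S_{\mathbb{L}(\mathbb{X},\mathbb{Y})}$ denote the (by hypothesis, finite) set of all extreme contractions of $\mathbb{L}(\mathbb{X},\mathbb{Y}).$ Since $T\notin\mathcal{E}$ and $\mathcal{E}$ is finite, the quantity
$$\epsilon_0:=\min\{\|T-E\|:E\in\mathcal{E}\}$$
is strictly positive. Now fix any $0<\epsilon<\epsilon_0$ and let $A_\epsilon$ be a uniform $\epsilon$-BPB approximation of $T.$ By Definition \ref{def-uniform}, we have $\|T-A_\epsilon\|<\epsilon<\epsilon_0.$ If $A_\epsilon$ were an extreme contraction, then $A_\epsilon\in\mathcal{E}$ and hence $\|T-A_\epsilon\|\geq\epsilon_0,$ contradicting the previous estimate. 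Therefore $A_\epsilon$ is not an extreme contraction, as desired.

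There is no real obstacle here; the proof is a direct translation of the finiteness-plus-approximation technique already deployed in Theorem \ref{th-funec}. The only point worth emphasising is that we do not need to construct $A_\epsilon$ explicitly (as was done in Theorem \ref{th-notextreme} for the analogous positive existence statement); we only need the fact that \emph{any} uniform $\epsilon$-BPB approximation is forced to sit in a small operator-norm neighbourhood of $T,$ which by choice of $\epsilon_0$ misses the finite set $\mathcal{E}$ of extreme contractions entirely.
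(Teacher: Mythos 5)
Your proof is correct and is essentially identical to the paper's own argument: the paper also sets $\epsilon_0=\inf\{\|T-A\|:A~\text{is an extreme contraction}\}$, notes this is positive by finiteness of the set of extreme contractions and the fact that $T$ is not one of them, and concludes from $\|T-A_\epsilon\|<\epsilon<\epsilon_0$ that $A_\epsilon$ cannot be an extreme contraction. No differences worth noting.
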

\begin{proof}
	Suppose $\epsilon_0=\inf\{\|T-A\|:A\text{~is~an~extreme~contraction~in~}\mathbb{L}(\mathbb{X},\mathbb{Y})\}.$ Since the number of extreme contractions in $\mathbb{L}(\mathbb{X},\mathbb{Y})$ is finite, $\epsilon_0>0.$ Suppose $0<\epsilon<\epsilon_0$ and  $A_\epsilon$ is a uniform $\epsilon-$BPB approximation of $T.$ Then $\|T-A_{\epsilon}\|<\epsilon<\epsilon_0$ clearly implies that $A_{\epsilon}$ is not an extreme contraction. This completes the proof of the proposition.
\end{proof}

\begin{remark}\label{remark-nex}
	Let $\mathbb{X},\mathbb{Y}$ be finite-dimensional polyhedral Banach spaces. Let $T\in S_{\mathbb{L}(\mathbb{X},\mathbb{Y})}$ be such that $T$ is not an extreme contraction. Then there exists $\epsilon_0>0$ such that for any $0<\epsilon<\epsilon_0,$ if $A_\epsilon$ is a uniform $\epsilon-$BPB approximation of $T,$ then $A_\epsilon$ is not an extreme contraction. 
\end{remark}

We now introduce a property of a Banach space that is useful in studying uniform $\epsilon-$BPB approximations of operators.

\begin{definition}
	We say that a Banach space $\mathbb{X}$ satisfies Property $(P),$ if there exists $r_0>0$ such that given any $A\in S_{\mathbb{L}(\mathbb{X},\mathbb{X})}\setminus \text{Iso}(\mathbb{X},\mathbb{X}),$ there exists $x_A\in S_\mathbb{X}$ such that $B(x_A,r_0)\cap M_A=\emptyset.$
\end{definition}

We next demonstrate the importance of Property (P) in studying uniform $\epsilon-$BPB approximations of operators.

\begin{theorem}\label{th-propertyP}
	Let $\mathbb{X}$ be a Banach space such that $\text{Iso}(\mathbb{X},\mathbb{X})$ is a finite set and $\mathbb{X}$ satisfies Property $(P).$ Let $T\in \text{Iso}(\mathbb{X},\mathbb{X}).$  Then there exists $\epsilon_0>0$ such that for each $0<\epsilon<\epsilon_0,$ $T$ is the only uniform $\epsilon-$BPB approximation of $T.$ 
\end{theorem}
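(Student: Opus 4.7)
The plan is to split according to whether the putative non-trivial uniform $\epsilon$-BPB approximation $A$ of $T$ is itself an isometry or not, and in each case derive a contradiction for $\epsilon$ below a suitable threshold. The critical observation that drives the argument is that $T \in \text{Iso}(\mathbb{X},\mathbb{X})$ forces $M_T = S_{\mathbb{X}}$, so the hypothesis $\|Tx_0\| > 1-\delta(\epsilon)$ in Definition \ref{def-uniform} is trivially satisfied by \emph{every} $x_0 \in S_{\mathbb{X}}$, no matter how small $\delta(\epsilon)$ is.

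First, let $r_0 > 0$ be the constant associated to Property $(P)$. Since $\text{Iso}(\mathbb{X},\mathbb{X})$ is finite, the quantity
\[
\eta := \min\{\|T - S\| : S \in \text{Iso}(\mathbb{X},\mathbb{X}),\ S \neq T\}
\]
is strictly positive (interpreted as $+\infty$ if no such $S$ exists). I would then set $\epsilon_0 := \min\{r_0, \eta\}$ and fix an arbitrary $0 < \epsilon < \epsilon_0$. Suppose, for contradiction, that $A \in S_{\mathbb{L}(\mathbb{X},\mathbb{X})}$ is a non-trivial uniform $\epsilon$-BPB approximation of $T$, so $A \neq T$ and $\|T - A\| < \epsilon$.

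In the first case, if $A \in \text{Iso}(\mathbb{X},\mathbb{X})$, then $A \neq T$ forces $\|T - A\| \geq \eta \geq \epsilon_0 > \epsilon$, contradicting $\|T - A\| < \epsilon$. In the second case, $A \notin \text{Iso}(\mathbb{X},\mathbb{X})$, so Property $(P)$ supplies a point $x_A \in S_{\mathbb{X}}$ with $B(x_A, r_0) \cap M_A = \emptyset$. Because $T$ is an isometry, $\|T x_A\| = 1 > 1 - \delta(\epsilon)$, so by the defining property of uniform $\epsilon$-BPB approximation there exists $u_0 \in M_A$ with $\|u_0 - x_A\| < \epsilon < r_0$. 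This places $u_0$ in $B(x_A, r_0) \cap M_A$, contradicting the choice of $x_A$. Either way we reach a contradiction, so $T$ is the only uniform $\epsilon$-BPB approximation of itself on $(0, \epsilon_0)$.

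There is no real obstacle beyond bookkeeping: both the finiteness of $\text{Iso}(\mathbb{X},\mathbb{X})$ and Property $(P)$ are engineered precisely to block the two possible avenues by which a non-trivial approximation could exist. The only subtlety worth emphasising in the write-up is the ``uniform'' aspect, namely that the bound $r_0$ from Property $(P)$ does not depend on $A$, which is what allows the single threshold $\epsilon_0$ to work against all competing $A$ at once.
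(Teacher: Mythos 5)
Your proposal is correct and follows essentially the same argument as the paper: rule out non-isometric approximations via Property $(P)$ and the point $x_A$ with $B(x_A,r_0)\cap M_A=\emptyset$, then rule out distinct isometric approximations via the positive minimum distance among the finitely many isometries, with $\epsilon_0$ taken below both thresholds. The only cosmetic difference is that the paper writes the isometry gap as an infimum and chooses $\epsilon_0$ strictly below $\min\{c,r_0\}$, which changes nothing.
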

\begin{proof}
	Let $c=\inf\{\|T-V\|:V\in\text{Iso}(\mathbb{X},\mathbb{X}), T\neq V\}.$ Then clearly, $c>0.$ Since $\mathbb{X}$ satisfies Property $(P),$ there exists $r_0>0$ such that given any $A\in S_{\mathbb{L}(\mathbb{X},\mathbb{X})}\setminus \text{Iso}(\mathbb{X},\mathbb{X}),$ there exists $x_A\in S_\mathbb{X}$ such that $B(x_A,r_0)\cap M_A=\emptyset.$ Choose $0<\epsilon_0<\min\{c,r_0\}.$ Let $0<\epsilon<\epsilon_0.$ Suppose that $A_\epsilon$ is a uniform $\epsilon-$BPB approximation of $T$ and the corresponding constant is $\delta(\epsilon).$ If possible, let $A_\epsilon\in S_{\mathbb{L}(\mathbb{X},\mathbb{X})}\setminus \text{Iso}(\mathbb{X},\mathbb{X}).$ Then there exists $x_{A_\epsilon}=x,$ (say) such that $B(x,r_0)\cap M_{A_\epsilon}=\emptyset.$ Since $T$ is an isometry, $x\in M_T,$ i.e., $\|Tx\|>1-\delta(\epsilon).$ Therefore, there exists $y\in M_{A_\epsilon}$ such that $\|y-x\|<\epsilon<r_0.$ This implies that $y\in B(x,r_0)\cap M_{A_\epsilon},$ which is a contradiction. Hence, $A_\epsilon\in \text{Iso}(\mathbb{X},\mathbb{X}).$ Now, $\|T-A_\epsilon\|<\epsilon<c$ clearly implies that $T=A_\epsilon.$ Thus, $T$ is the only uniform $\epsilon-$BPB approximation of $T.$ This completes the proof of the theorem.
\end{proof}

Our next result proves the existence of a large family of Banach spaces having Property (P). Later on, in due course of time, we will deal with other types of spaces also having Property (P).

\begin{theorem}\label{th-polyP}
	Let $\mathbb{X}$ be a finite-dimensional polyhedral Banach space. Then $\mathbb{X}$ satisfies Property $(P).$
\end{theorem}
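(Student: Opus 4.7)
The plan is to exploit the polyhedral structure of $B_\mathbb{X}$ to show that the norm-attainment set $M_A$ of any norm-one operator $A\in \mathbb{L}(\mathbb{X},\mathbb{X})$ can only be one of finitely many subsets of $S_\mathbb{X}$, and then to extract the uniform constant $r_0$ directly from this finiteness.

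The central ingredient is the geometric lemma that for every $A \in S_{\mathbb{L}(\mathbb{X}, \mathbb{X})}$, the set $M_A$ is a union of closed faces of $B_\mathbb{X}$. I would prove this by a direct convexity argument. Suppose $x \in M_A$ and let $F$ be the unique face of $B_\mathbb{X}$ with $x \in \text{int}_r(F)$. Writing $x = \sum \lambda_i v_i$ with $\lambda_i > 0$ and any $y \in F$ as $y = \sum \mu_i v_i$ in terms of the extreme points $v_1,\ldots,v_m$ of $F$, for all sufficiently small $\epsilon > 0$ the vector $z_\epsilon := (1+\epsilon)x - \epsilon y = \sum ((1+\epsilon)\lambda_i - \epsilon \mu_i) v_i$ still belongs to $F \subseteq S_\mathbb{X}$. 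The identity $x = \tfrac{1}{1+\epsilon} z_\epsilon + \tfrac{\epsilon}{1+\epsilon} y$ together with $\|Ax\| = 1$ and $\|Az_\epsilon\|,\|Ay\| \leq 1$ forces $\|Ay\| = 1$ via the triangle inequality, so $y \in M_A$ and hence $F \subseteq M_A$.

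Once the lemma is established, let $\mathcal{M}$ denote the collection of all unions of closed faces of $B_\mathbb{X}$ that are proper subsets of $S_\mathbb{X}$. The polyhedrality of $B_\mathbb{X}$ ensures that its face lattice is finite, so $\mathcal{M}$ is a finite set. For each $M \in \mathcal{M}$, compactness of $S_\mathbb{X}$ together with the non-emptiness and relative openness of $S_\mathbb{X} \setminus M$ yields some $x_M \in S_\mathbb{X}$ with $r_M := d(x_M, M) > 0$. Setting $r_0 := \min\{r_M : M \in \mathcal{M}\} > 0$ gives a uniform constant. To close the argument, I would observe that $M_A = S_\mathbb{X}$ means $\|Ax\|=\|x\|$ for every $x$, forcing $A \in \text{Iso}(\mathbb{X},\mathbb{X})$ by finite-dimensionality; contrapositively, for any $A \in S_{\mathbb{L}(\mathbb{X},\mathbb{X})} \setminus \text{Iso}(\mathbb{X},\mathbb{X})$ the lemma gives $M_A \in \mathcal{M}$, and choosing $x_A := x_{M_A}$ yields $B(x_A, r_0) \cap M_A = \emptyset$ as required.

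The main obstacle is verifying the lemma, specifically the passage from a single relative-interior point of a face lying in $M_A$ to the entire face being contained in $M_A$; the slight delicacy there is that $\epsilon > 0$ must be chosen small enough for $z_\epsilon$ to remain in $F$, which is where the hypothesis $\lambda_i > 0$ is essential. Once this face-invariance of $M_A$ is in hand, the uniformity of $r_0$ is a formal consequence of the finite cardinality of the face lattice of $B_\mathbb{X}$, requiring no further analytic work.
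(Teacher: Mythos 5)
Your proposal is correct and follows essentially the same route as the paper: both arguments rest on the key fact that $M_A$ is a union of closed faces of $B_\mathbb{X}$ (equivalently, that a relative-interior point of a face lying in $M_A$ forces the whole face into $M_A$) combined with the finiteness of the face lattice to extract a uniform $r_0$. Your convexity argument with $z_\epsilon=(1+\epsilon)x-\epsilon y$ supplies the detail the paper dismisses as ``easy to observe,'' and the only organizational difference is that you enumerate all possible proper unions of faces to define $r_0$, whereas the paper fixes one relative-interior point per facet and takes $r_0$ smaller than its distance to every face not containing it --- a cosmetic rather than substantive distinction.
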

\begin{proof}
	Clearly, $B_{\mathbb{X}}$ has finitely many facets say, $M_1,M_2,\ldots,M_k.$ Choose $x_i\in \text{int}_r(M_i)$ for $1\leq i\leq k.$ Choose $$0<r_0<\min\{d(x_i,F):F \text{~is~a~face~of~} B_{\mathbb{X}},x_i\notin F, 1\leq i\leq k\}.$$
	Let $A\in S_{\mathbb{L}(\mathbb{X},\mathbb{X})}\setminus \text{Iso}(\mathbb{X},\mathbb{X}).$ If $x_i\in M_A$ for some $1\leq i\leq k,$ then it is easy to observe that $M_i\subseteq M_A.$ Since $A$ is not an isometry, clearly $x_i\notin M_A$ for some $1\leq  i\leq k.$ We claim that $B(x_i,r_0)\cap M_A=\emptyset.$  If possible, let $y\in B(x_i,r_0)\cap M_A.$ Then $y\in \text{int}_r(F)$ for some face $F$ of $B_{\mathbb{X}}.$ Now, $y\in M_A$ gives that $F\subseteq M_A.$  If $x_i\notin F,$ then $\|x_i-y\|\geq d(x_i,F)>r_0,$ a contradiction. Therefore, $x_i\in F.$ Thus, we get  $x_i\in M_A,$ which is again a contradiction. Hence, $B(x_i,r_0)\cap M_A=\emptyset.$ This completes the proof of the theorem.
\end{proof}

The above two theorems immediately yield the following corollary.

\begin{cor}\label{cor-isometry}
	Let $\mathbb{X}$ be a finite-dimensional polyhedral Banach space. Let $T\in \mathbb{L}(\mathbb{X},\mathbb{X})$ be an isometry. Then there exists $\epsilon_0>0$ such that for each $0<\epsilon<\epsilon_0,$ $T$ is the only uniform $\epsilon-$BPB approximation of $T.$ 
\end{cor}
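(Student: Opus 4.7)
The plan is to deduce Corollary \ref{cor-isometry} by combining Theorem \ref{th-polyP} and Theorem \ref{th-propertyP}, after verifying the hypothesis that $\text{Iso}(\mathbb{X},\mathbb{X})$ is a finite set. The structure is straightforward: once the finiteness of the isometry group is in hand, everything needed is already established.

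First, I would invoke Theorem \ref{th-polyP} directly to conclude that the finite-dimensional polyhedral Banach space $\mathbb{X}$ satisfies Property $(P).$ This takes care of one of the two hypotheses of Theorem \ref{th-propertyP}.

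Second, I would show that $\text{Iso}(\mathbb{X},\mathbb{X})$ is finite. The argument runs as follows. Let $E$ denote the set of extreme points of $B_\mathbb{X}.$ Since $\mathbb{X}$ is polyhedral and finite-dimensional, $E$ is a finite set. Any surjective linear isometry $U \in \text{Iso}(\mathbb{X},\mathbb{X})$ maps $B_\mathbb{X}$ onto itself and hence permutes $E.$ As $\mathbb{X}$ is finite-dimensional, $B_\mathbb{X}$ is the convex hull of $E$ and so $E$ spans $\mathbb{X};$ in particular, a linear map on $\mathbb{X}$ is completely determined by its values on $E.$ Therefore the restriction map $U \mapsto U|_E$ embeds $\text{Iso}(\mathbb{X},\mathbb{X})$ into the (finite) symmetric group on $E,$ and the isometry group is finite.

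Finally, with both hypotheses verified, Theorem \ref{th-propertyP} applies to yield an $\epsilon_0 > 0$ such that for every $0 < \epsilon < \epsilon_0,$ $T$ is the only uniform $\epsilon-$BPB approximation of itself. The only step where there is anything to check is the finiteness of $\text{Iso}(\mathbb{X},\mathbb{X}),$ and this is a standard consequence of polyhedrality; everything else is a direct citation of results already established in the paper.
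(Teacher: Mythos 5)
Your proposal is correct and follows essentially the same route as the paper: verify Property $(P)$ via Theorem \ref{th-polyP}, establish finiteness of $\text{Iso}(\mathbb{X},\mathbb{X})$ by noting that isometries permute the finitely many extreme points of $B_\mathbb{X}$ (the paper states this more tersely, while you supply the justification that $E$ spans $\mathbb{X}$ so the permutation determines the isometry), and then apply Theorem \ref{th-propertyP}.
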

\begin{proof}
	Suppose $U$ is an arbitrary isometry on $\mathbb{X}.$ Then $U$ maps extreme points of $B_{\mathbb{X}}$ to extreme points of $B_{\mathbb{X}}.$ Since $\mathbb{X}$ is a finite-dimensional polyhedral Banach space, $B_{\mathbb{X}}$ has only finitely many extreme points. Thus, there are only finitely many distinct isometries on $\mathbb{X}.$ Now, the result follows from Theorem \ref{th-polyP} and Theorem \ref{th-propertyP}.
\end{proof}

Our next two results further illustrate the role of the norm attainment set of an operator in the study of uniform $\epsilon-$BPB approximations.

\begin{prop}
	Let $\mathbb{X},\mathbb{Y}$ be Banach spaces. Suppose $T\in S_{\mathbb{L}(\mathbb{X},\mathbb{Y})}$ is such that $M_T$ is a non-empty discrete set. Then there exists $\epsilon_0>0$ such that for $0<\epsilon<\epsilon_0$ if $A_\epsilon$ is a non-trivial uniform $\epsilon-$BPB approximation of $T,$ then $|M_{A_\epsilon}|\geq |M_T|.$ 
\end{prop}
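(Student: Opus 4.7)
The plan is to use the defining injection provided by the uniform $\epsilon-$BPB property: for every $x \in M_T$ the condition $\|Tx\| = 1 > 1 - \delta(\epsilon)$ produces a partner $u_x \in M_{A_\epsilon}$ with $\|u_x - x\| < \epsilon$, and provided $\epsilon$ is small compared to the ``separation'' of $M_T$, distinct $x$'s will yield distinct $u_x$'s.

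First I would quantify the discreteness of $M_T$ by setting
$$r_0 := \inf\{\|x-y\| : x,y \in M_T,\ x \neq y\},$$
interpreting the discreteness hypothesis as $r_0 > 0$ (if $|M_T| = 1$ the claim is trivial once $M_{A_\epsilon} \neq \emptyset$, which is supplied by the BPB definition applied to any $x \in M_T$). Choose $\epsilon_0 = r_0/2$. Now fix $0 < \epsilon < \epsilon_0$, and suppose $A_\epsilon$ is a non-trivial uniform $\epsilon-$BPB approximation of $T$ with associated constant $\delta(\epsilon) > 0$ from Definition \ref{def-uniform}.

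For each $x \in M_T$, we have $\|Tx\| = 1 > 1 - \delta(\epsilon)$, so Definition \ref{def-uniform} (applied to $x_0 = x$) guarantees some $u_x \in M_{A_\epsilon}$ with $\|u_x - x\| < \epsilon$. Choosing one such $u_x$ for every $x$ defines a map $\Phi : M_T \to M_{A_\epsilon}$, $\Phi(x) = u_x$. For distinct $x,y \in M_T$ the triangle inequality gives
$$\|u_x - u_y\| \ \geq\ \|x-y\| - \|u_x - x\| - \|y - u_y\| \ >\ r_0 - 2\epsilon \ >\ 0,$$
so $u_x \neq u_y$. Hence $\Phi$ is injective and $|M_{A_\epsilon}| \geq |M_T|$.

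The only real subtlety, and the step on which I expect this proof to rest, is correctly reading the word \emph{discrete} in the hypothesis: the argument above needs the \emph{uniform} discreteness $r_0 > 0$, not merely the topological condition that every point of $M_T$ be isolated in $S_\mathbb{X}$. In the infinite-dimensional setting the two notions can diverge (isolated points may cluster arbitrarily close to one another in $S_\mathbb{X}$), so a single $\epsilon_0$ would not suffice under the weaker reading. Under the intended uniform reading, the argument reduces, as above, to a one-line triangle inequality once the BPB definition is unpacked.
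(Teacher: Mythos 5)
Your proof is correct and is essentially the same as the paper's: the paper likewise takes $c=\inf\{\|x-y\|:x,y\in M_T,\ x\neq y\}>0$, chooses $\epsilon<c/2$, assigns to each $x\in M_T$ a point $z_x\in M_{A_\epsilon}\cap B(x,\epsilon)$ via the BPB definition, and concludes injectivity from the disjointness of the balls $B(x,\epsilon)$ (your triangle-inequality computation is the same fact). Your closing caveat about uniform versus merely topological discreteness is well taken --- the paper also silently passes from ``closed discrete'' to a positive infimum of pairwise distances --- but it does not change the argument, which matches the paper's.
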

\begin{proof}
	Since $M_T$ is a closed discrete set, there exists a real number $c>0$ such that $\inf\{\|x-y\|:x,y\in M_T,x\neq y\}=c.$ Choose $0<\epsilon<\frac{c}{2}.$ Then $B(x,\epsilon)\cap B(y,\epsilon)=\emptyset,$ if $x,y\in M_T$ and $x\neq y.$ Let $A_\epsilon$ be a non-trivial uniform $\epsilon-$BPB approximation of $T$ and the corresponding constant is $\delta(\epsilon).$ Then for each $x\in M_T,~\|Tx\|>1-\delta(\epsilon).$ Therefore, there exits $z\in M_{A_\epsilon}$ such that $\|z-x\|<\epsilon.$ Thus, for $x,y\in M_T$ and $x\neq y,$ there exists $z_x,z_y\in M_{A_\epsilon}$ such that $z_x\in B(x,\epsilon)$ and $z_y\in B(y,\epsilon).$ Now, $B(x,\epsilon)\cap B(y,\epsilon)=\emptyset$ implies that $z_x\neq z_y.$ Therefore, $|M_T|\leq |M_{A_\epsilon}|.$
\end{proof}

\begin{prop}\label{prop-dist}
	Let $\mathbb{X},\mathbb{Y}$ be Banach spaces. Let $T\in S_{\mathbb{L}(\mathbb{X},\mathbb{Y})}$ be such that $M_T\neq \emptyset.$ Suppose that for each $\epsilon>0,$ $T$ admits a non-trivial uniform $\epsilon-$BPB approximation $A_{\epsilon}$ such that $M_{A_\epsilon}=M_T$. If $Z$ is a subspace of $\mathbb{X}$ with $d(M_T,Z)>0,$ then $\|T\|_Z<1.$
\end{prop}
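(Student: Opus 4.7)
The plan is to argue by contradiction. Since $\|T\|=1$ and $Z$ is a subspace of $\mathbb{X}$, we automatically have $\|T\|_Z\le 1$, so the only way the conclusion can fail is if $\|T\|_Z=1$. I would assume this and derive a contradiction by exploiting the gap $d:=d(M_T,Z)>0$ against the uniform $\epsilon$-BPB hypothesis.

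More concretely, the first step is to translate $\|T\|_Z=1$ into the statement that for every $\eta>0$ there is some $z\in S_Z$ with $\|Tz\|>1-\eta$. The second step is to pick $\epsilon>0$ with $\epsilon<d$ and invoke the hypothesis: by assumption there exists a non-trivial uniform $\epsilon$-BPB approximation $A_\epsilon$ of $T$ with $M_{A_\epsilon}=M_T$, and there is an associated constant $\delta(\epsilon)>0$ from Definition \ref{def-uniform}. The third step is to apply Step~1 with $\eta=\delta(\epsilon)$: this produces $z\in S_Z$ with $\|Tz\|>1-\delta(\epsilon)$, so by the uniform $\epsilon$-BPB property applied to the pair $(T,A_\epsilon)$ there exists $u\in M_{A_\epsilon}=M_T$ with $\|u-z\|<\epsilon$. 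The fourth step is to observe that since $u\in M_T$ and $z\in Z$, we must have $\|u-z\|\ge d(M_T,Z)=d>\epsilon$, which contradicts $\|u-z\|<\epsilon$. Hence the assumption $\|T\|_Z=1$ is untenable and $\|T\|_Z<1$ as desired.

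I do not expect any substantial obstacle here: the proof is essentially a direct unpacking of Definition \ref{def-uniform} together with the hypothesis $M_{A_\epsilon}=M_T$. The only point requiring mild care is confirming that the choice of $\epsilon$ is legitimate, namely that $\epsilon$ can be taken strictly smaller than the positive quantity $d(M_T,Z)$ while still producing an approximation $A_\epsilon$ via the hypothesis (this is fine because the hypothesis grants such an $A_\epsilon$ for every $\epsilon>0$). One may also note at the end that the argument in fact gives the quantitative bound $\|T\|_Z\le 1-\delta(\epsilon)$ for any $\epsilon<d(M_T,Z)$, which is a slightly stronger conclusion than the statement requires.
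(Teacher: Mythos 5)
Your proof is correct and follows essentially the same route as the paper: assume $\|T\|_Z=1$, take $\epsilon<d(M_T,Z)$, produce $z\in S_Z$ with $\|Tz\|>1-\delta(\epsilon)$, and use $M_{A_\epsilon}=M_T$ to contradict the distance gap. The paper phrases the first step via a norming sequence in $S_Z$, but this is the same argument.
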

\begin{proof}
	If possible, suppose that $\|T\|_Z=1.$ Then there exists a sequence $\{x_n\}\subseteq S_Z$ such that $\|Tx_n\|\to 1.$ Choose $0<\epsilon<d(M_T,Z).$ Let $\delta(\epsilon)$ be the constant of non-trivial uniform $\epsilon-$BPB approximation $A_\epsilon$ of $T$ such that $M_{A_\epsilon}=M_T.$ Choose a natural number $k$ such that $\|Tx_k\|>1-\delta(\epsilon).$ Then there exists $u_k\in M_{A_\epsilon}$ such that $\|u_k-x_k\|<\epsilon.$ Now, $d(M_T,Z)=d(M_{A_\epsilon},Z)\leq \|u_k-x_k\|<\epsilon.$ This contradicts the choice of $\epsilon.$ Therefore, $\|T\|_Z<1.$ This completes the proof of the proposition. 
\end{proof}

In the next two theorems, we further study the preservation of the operator norm attainment set under uniform $\epsilon-$BPB approximations, provided certain additional conditions are satisfied.

\begin{theorem}
	Let $\mathbb{X}$ be a reflexive, Kadets-Klee Banach space and let $\mathbb{Y}$ be a Banach space. Let $T\in \mathbb{K}(\mathbb{X},\mathbb{Y})$ be a norm one operator. Let $span \{M_T\}=X_1.$ If there exists a subspace $X_2$ of $\mathbb{X}$ such that $\mathbb{X}=X_1\oplus X_2,$ $\|T\|_{X_2}>0$ and $X_1\perp_B X_2,$ then for every $\epsilon>0,$ $T$ admits a non-trivial uniform $\epsilon-$BPB approximation $A_\epsilon$ such that $M_{A_\epsilon}=M_T.$ 
\end{theorem}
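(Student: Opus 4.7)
The plan is to define $A_\epsilon$ by leaving the action of $T$ on $X_1$ unchanged and dampening its action on $X_2$ by a small factor $1-\mu$. Concretely, since $\mathbb{X} = X_1 \oplus X_2,$ each $x \in \mathbb{X}$ has a unique decomposition $x = x_1 + x_2$ with $x_i \in X_i,$ and the hypothesis $X_1 \perp_B X_2$ yields $\|x_1\| \leq \|x_1 + x_2\| = \|x\|,$ so the associated projection $P_1 \colon x \mapsto x_1$ has norm at most one. Setting $\alpha := \|T\|_{X_2} > 0,$ I would pick $\mu \in (0, 1)$ with $2\mu\alpha < \epsilon$ and define
\[
A_\epsilon x := T\bigl(x_1 + (1-\mu) x_2\bigr).
\]

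The first main step is to establish $\|A_\epsilon\| = 1$ and $M_{A_\epsilon} = M_T$ simultaneously. The crux is the rewriting
\[
y := x_1 + (1-\mu)x_2 = (1-\mu)x + \mu x_1,
\]
which, together with $\|x_1\| \leq \|x\|,$ gives $\|y\| \leq (1-\mu)\|x\| + \mu\|x_1\| \leq \|x\|.$ Hence $\|A_\epsilon x\| = \|Ty\| \leq \|x\|,$ so $\|A_\epsilon\| \leq 1;$ equality follows because for $x \in M_T \subseteq X_1$ we have $x_2 = 0,$ $y = x,$ and $\|A_\epsilon x\| = \|Tx\| = 1,$ which simultaneously yields $M_T \subseteq M_{A_\epsilon}.$ For the reverse inclusion, if $x \in M_{A_\epsilon}$ then $\|Ty\| = 1$ forces $\|y\| = 1$ and $y \in M_T \subseteq X_1;$ consequently $(1-\mu) x_2 = y - x_1 \in X_1 \cap X_2 = \{0\},$ so $x_2 = 0$ and $x = y \in M_T.$

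The second step is quick: $(A_\epsilon - T)x = -\mu T x_2$ gives $\|A_\epsilon - T\| \leq \mu \alpha \|P_2\| \leq 2\mu\alpha < \epsilon$ (using $\|x_2\| \leq \|x\| + \|x_1\| \leq 2\|x\|$), and $\alpha > 0$ forces $A_\epsilon \neq T.$ To verify the uniform $\epsilon$-BPB approximation property I would invoke Theorem \ref{th-appnorm}: since $\mathbb{X}$ is reflexive Kadets-Klee and $T$ is compact, there exists $\delta(\epsilon) > 0$ with $M_T(\delta(\epsilon)) \subseteq \bigcup_{x \in M_T} B(x, \epsilon);$ the already-established equality $M_{A_\epsilon} = M_T$ then supplies, for any $x_0 \in S_\mathbb{X}$ with $\|T x_0\| > 1 - \delta(\epsilon),$ a point $u_0 \in M_{A_\epsilon}$ within distance $\epsilon$ of $x_0.$

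The main obstacle is finding the right shape of $A_\epsilon$ so that three conditions hold at once: the norm equals exactly one, the norm attainment set coincides with $M_T,$ and the operator is genuinely close to but distinct from $T.$ The decisive observation is the convex identity $y = (1-\mu)x + \mu x_1,$ which is what allows the qualitative hypothesis $X_1 \perp_B X_2$ to be converted into the sharp quantitative inequality $\|y\| \leq \|x\|;$ without this convexity trick it is not at all clear that dampening $T$ on $X_2$ preserves the operator norm, since the decomposition is not in any sense orthogonal in the image.
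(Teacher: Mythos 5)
Your proof is correct and follows essentially the same route as the paper: the paper defines $A_n(x_1+x_2)=Tx_1+(1-\tfrac1n)Tx_2$, uses the same convex rewriting (applied to $Tx$ in the codomain rather than to $x$ in the domain, which is equivalent) together with $x_1\perp_B x_2$ to get $\|A_n\|\leq 1$, and then invokes Theorem \ref{th-appnorm} exactly as you do. If anything, your verification of $M_{A_\epsilon}\subseteq M_T$ via $y\in M_T\subseteq X_1$ and $X_1\cap X_2=\{0\}$ treats the mixed vectors $x_1+x_2$ more explicitly than the paper's version, which only records the cases $x_2\in S_{X_2}$ and $x_1\in S_{X_1}\setminus M_T$.
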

\begin{proof}
	Since $\mathbb{X}=X_1\oplus X_2,$ for each $x\in \mathbb{X},$ there exists $x_1\in X_1$ and $x_2\in X_2$ such that $x=x_1+x_2.$ For each $n\in \mathbb{N},$ define $A_n:\mathbb{X}\to \mathbb{Y}$ as follows:
	\[A_n(x_1+x_2)=Tx_1+(1-\frac{1}{n})Tx_2,~\text{where~}x_1\in X_1,x_2\in X_2.\]
	Now, 
	\begin{eqnarray*}
	\|A_n(x_1+x_2)\|&=&\|Tx_1+(1-\frac{1}{n})Tx_2\|\\
	                &=&\|(1-\frac{1}{n})(Tx_1+Tx_2)+\frac{1}{n}Tx_1\|\\
	                &\leq& (1-\frac{1}{n})\|T(x_1+x_2)\|+\frac{1}{n}\|Tx_1\|\\
	                &\leq& (1-\frac{1}{n})\|T(x_1+x_2)\|+\frac{1}{n}\|T\|\|x_1\|\\
	                &\leq& (1-\frac{1}{n})\|T(x_1+x_2)\|+\frac{1}{n}\|x_1+x_2\|,~(\text{since}~x_1\perp_Bx_2)\\
	                &\leq& (1-\frac{1}{n})\|x_1+x_2\|+\frac{1}{n}\|x_1+x_2\|\\
	                &=&\|x_1+x_2\|.
	\end{eqnarray*}
    Thus, $\|A_n\|\leq 1.$ If $x_1\in M_T,$ then $\|A_nx_1\|=\|Tx_1\|=1.$ So $\|A_n\|=1$ and $M_T\subseteq M_{A_n}.$ For $x_2\in S_{X_2},~\|A_nx_2\|=(1-\frac{1}{n})\|Tx_2\|<1$ and for $x_1\in S_{X_1}\setminus M_T,~\|A_nx_1\|=\|Tx_1\|<1.$ Thus, $M_T=M_{A_n}.$ Now, $\|(T-A_n)(x_1+x_2)\|=\frac{1}{n}\|Tx_2\|\leq \frac{1}{n}\|x_2\|\leq\frac{2}{n}\|x_1+x_2\|.$ Hence, $\|T-A_n\|\leq \frac{2}{n}$ gives that $T-A_n\to 0.$ Moreover, $\|T\|_{X_2}>0$ gives that there exists $x_2\in X_2$ such that $Tx_2\neq 0.$ Therefore, $A_n\neq T$ for each $n>1.$ For $\epsilon>0,$ there exists $k\in \mathbb{N}\setminus\{1\}$ such that $\|T-A_k\|<\epsilon.$ Since $\mathbb{X}$ is reflexive, Kadets-Klee Banach space and $T$ is compact operator, by Theorem \ref{th-appnorm}, we say that there exists $\delta(\epsilon)>0$ such that $M_T(\delta(\epsilon))\subseteq\cup_{x\in M_T}B(x,\epsilon).$ Now, proceeding similarly as in Theorem \ref{th-notextreme}, it can be shown that $A_k$ is a non-trivial uniform $\epsilon-$BPB approximation of $T.$ This completes the proof of the theorem.
\end{proof}

\begin{theorem}
	Let $\mathbb{X},\mathbb{Y}$ be Banach spaces. Let $T\in S_{\mathbb{L}(\mathbb{X},\mathbb{Y})}$ be such that $M_T=\{\pm x_0\}.$ Suppose $span\{x_0\}$ is an L-summand of $\mathbb{X}$ and $P$ is the L-projection with range$(P)=span\{x_0\}.$ Suppose $0<\|T\|_{\ker(P)}<1.$ Then for each $\epsilon>0,$ $T$ admits a non-trivial uniform $\epsilon-$BPB approximation $A_\epsilon$ such that $M_{A_\epsilon}=M_T.$  
\end{theorem}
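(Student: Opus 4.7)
The plan is to mimic the construction from the preceding two theorems but exploit the L-projection decomposition, which is strong enough to make the argument work without invoking Theorem~\ref{th-appnorm} (so we do not need reflexivity, Kadets-Klee, or compactness). Write $c=\|T\|_{\ker(P)}$, so $0<c<1$. Every $x\in\mathbb{X}$ decomposes uniquely as $x=\alpha x_0+y$ with $\alpha\in\mathbb{R}$ and $y\in\ker(P)$, and the $L$-projection property together with $\|x_0\|=1$ gives $\|x\|=|\alpha|+\|y\|$. Define
\[
A_n(\alpha x_0+y)=\alpha\,Tx_0+\Bigl(1-\tfrac{1}{n}\Bigr)Ty.
\]

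First I would verify the basic properties of $A_n$. The chain
$\|A_n(\alpha x_0+y)\|\le|\alpha|\|Tx_0\|+(1-\tfrac1n)\|Ty\|\le|\alpha|+(1-\tfrac1n)c\|y\|\le|\alpha|+\|y\|=\|\alpha x_0+y\|$
shows $\|A_n\|\le 1$, and since $A_n(\pm x_0)=\pm Tx_0$ with $\|Tx_0\|=1$, we get $\|A_n\|=1$ and $\{\pm x_0\}\subseteq M_{A_n}$. For the reverse inclusion, if $x=\alpha x_0+y\in M_{A_n}$ with $\|x\|=1$, then equality in the chain above forces $\|y\|\le(1-\tfrac1n)c\|y\|$, hence $\|y\|=0$ and $x=\pm x_0$. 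Thus $M_{A_n}=M_T=\{\pm x_0\}$. Next, $(T-A_n)(\alpha x_0+y)=\tfrac{1}{n}Ty$, so $\|T-A_n\|\le c/n$; and because $\|T\|_{\ker(P)}>0$ picks out some $y\in\ker(P)$ with $Ty\neq 0$, we have $A_n\neq T$ for every $n$.

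The crucial point is the uniform BPB estimate, which I would obtain directly from the $L$-decomposition rather than from Theorem~\ref{th-appnorm}. For any $x=\alpha x_0+y\in S_{\mathbb{X}}$,
\[
\|Tx\|\le|\alpha|\|Tx_0\|+\|Ty\|\le|\alpha|+c\|y\|=|\alpha|+c(1-|\alpha|)=c+(1-c)|\alpha|.
\]
Hence if $\|Tx\|>1-\delta$ with $\delta<1-c$, then $|\alpha|>1-\delta/(1-c)$. Writing $u_0=\mathrm{sgn}(\alpha)\,x_0\in M_{A_n}$, we obtain
\[
\|x-u_0\|=\bigl\|(|\alpha|-1)\mathrm{sgn}(\alpha)x_0+y\bigr\|\le(1-|\alpha|)+\|y\|=2(1-|\alpha|)<\frac{2\delta}{1-c}.
\]

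Putting everything together, given $\epsilon>0$, choose $n$ so large that $c/n<\epsilon$ and set $A_\epsilon=A_n$. Then $\|T-A_\epsilon\|<\epsilon$, $A_\epsilon\neq T$, and $M_{A_\epsilon}=M_T$. Define $\delta(\epsilon)=\min\{(1-c)\epsilon/2,\ (1-c)/2\}>0$; the estimate above shows that whenever $x\in S_{\mathbb{X}}$ satisfies $\|Tx\|>1-\delta(\epsilon)$ there exists $u_0\in\{\pm x_0\}=M_{A_\epsilon}$ with $\|u_0-x\|<\epsilon$. Hence $A_\epsilon$ is a non-trivial uniform $\epsilon$-BPB approximation of $T$ with $M_{A_\epsilon}=M_T$. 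The one place demanding care is the bookkeeping showing $M_{A_n}\subseteq\{\pm x_0\}$: it uses the strict inequality $(1-\tfrac1n)c<1$, which is where the hypothesis $\|T\|_{\ker(P)}<1$ enters essentially, and the $L$-summand identity $\|x\|=|\alpha|+\|y\|$, which replaces the reflexivity-plus-Kadets-Klee machinery used earlier.
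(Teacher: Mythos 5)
Your proof is correct, and it uses the same approximating operators $A_n(\alpha x_0+h)=\alpha Tx_0+(1-\tfrac1n)Th$ as the paper; the difference lies entirely in how the uniform BPB estimate is obtained. The paper first proves the claim $M_T(\delta(\epsilon))\subseteq B(x_0,\epsilon)\cup B(-x_0,\epsilon)$ by contradiction, extracting from a norming sequence a subsequence converging to $\pm x_0$ by the argument of \cite[Th. 3.1]{MPRS} (which exploits the $L$-summand structure sequentially), and then finishes ``as in Theorem \ref{th-notextreme}.'' You instead read the estimate off the $L$-decomposition directly: from $\|x\|=|\alpha|+\|y\|$ you get $\|Tx\|\le c+(1-c)|\alpha|$ with $c=\|T\|_{\ker(P)}$, hence an explicit $\delta(\epsilon)=\min\{(1-c)\epsilon/2,(1-c)/2\}$ and the exact identity $\|x-\mathrm{sgn}(\alpha)x_0\|=2(1-|\alpha|)$. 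This buys a fully quantitative, self-contained proof with no appeal to sequential or compactness-type arguments, and it also spells out the verifications the paper labels ``easy to observe'' ($\|A_n\|=1$ and $M_{A_n}=\{\pm x_0\}$, the latter correctly hinging on the strict inequality $(1-\tfrac1n)c<1$). The paper's route, by contrast, stays closer to the template of Theorem \ref{th-appnorm} and its variants, at the cost of an external reference and non-explicit constants. One small point handled correctly but worth keeping visible: your choice $\delta(\epsilon)\le(1-c)/2$ is what guarantees $\alpha\neq 0$, so that $\mathrm{sgn}(\alpha)$ is well defined.
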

\begin{proof}
	We first claim that for $\epsilon>0,$ there exists $\delta(\epsilon)>0$ such that $M_T(\delta(\epsilon))\subseteq B(x_0,\epsilon)\cup B(-x_0,\epsilon).$ If it is not true, then for each $n\in \mathbb{N},$ there exists $x_n\in M_T(\frac{1}{n})$ such that $x_n\notin B(x_0,\epsilon)\cup B(-x_0,\epsilon).$ Clearly, $\|Tx_n\|>1-\frac{1}{n}$ gives that $\|Tx_n\|\to 1.$ Now, proceeding similarly, as in the proof of \cite[Th. 3.1]{MPRS}, we can conclude that $\{x_n\}$ has a convergent subsequence converging to $ax_0,$ where $|a|=1.$ This contradicts that $x_n\notin B(x_0,\epsilon)\cup B(-x_0,\epsilon)$ for each $n.$ This proves our claim. Observe that each $z\in \mathbb{X}$ can be written as $z=\alpha x_0+h$ for some $\alpha \in \mathbb{R}$ and $h\in \ker(P).$ Now, for each $n\in \mathbb{N}$ define $A_n:\mathbb{X}\to\mathbb{Y}$ as follows:
	$$A_n(\alpha x_0+h)=\alpha Tx_0+(1-\frac{1}{n})Th, ~\text{where}~\alpha \in \mathbb{R},h\in \ker(P).$$
	It is easy to observe that $\|A_n\|=1,~M_{A_n}=M_T=\{\pm x_0\}$ and $A_n\to T.$ Now, $\|T\|_{\ker(P)}>0$ gives that there exists $h\in \ker(P)$ such that $Tx\neq 0$ and so $A_nx\neq Tx,$ i.e., $A_n\neq T$ for all $n>1.$ Now, proceeding similarly as in the proof of Theorem \ref{th-notextreme}, it can be shown that $T$ admits a non-trivial uniform $\epsilon-$BPB approximation $A_\epsilon$ of $T$ such that $M_{A_\epsilon}=M_T.$ 
\end{proof}

The norm attainment set of an operator is a key factor in determining its smoothness. In this spirit, we next present the following three results highlighting the connection between smoothness of an operator and uniform $\epsilon-$BPB approximations.

\begin{theorem}\label{th-sm}
	Let $\mathbb{X}$ be a Banach space and let $\mathbb{Y}$ be a smooth Banach space. Let $T\in S_{\mathbb{L}(\mathbb{X},\mathbb{Y})}$ be such that $M_T\neq \emptyset.$  Suppose that for each $\epsilon>0,$ $T$ admits a non-trivial uniform $\epsilon-$BPB approximation which is smooth. Then $T$ is smooth. 
\end{theorem}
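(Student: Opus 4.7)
The plan is to argue by contrapositive: assume $T$ is not smooth and derive a contradiction with the existence of smooth non-trivial uniform $\epsilon-$BPB approximations of $T$ for sufficiently small $\epsilon > 0$.

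The pivotal tool I would invoke is the following characterization, valid when $\mathbb{Y}$ is smooth: a norm-attaining operator $S \in S_{\mathbb{L}(\mathbb{X},\mathbb{Y})}$ is smooth if and only if $M_S$ is a single antipodal pair $\{\pm z\}$. The forward direction, which I will use crucially, is established via the elementary support functionals: for each $x \in M_S$, smoothness of $\mathbb{Y}$ gives a unique $y^*_{Sx} \in J(Sx)$, and then $\Phi_x(V) := y^*_{Sx}(Vx)$ defines an element of $J(S)$; distinct antipodal pairs of $x$ in $M_S$ produce distinct such $\Phi_x$, so smoothness of $S$ forces $M_S = \{\pm z\}$.

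Applying this forward direction to each smooth $A_\epsilon$ produces $M_{A_\epsilon} = \{\pm y_\epsilon\}$ (and $M_{A_\epsilon}$ is non-empty because the uniform $\epsilon-$BPB property, applied to any $x \in M_T$ where $\|Tx\| = 1 > 1 - \delta(\epsilon)$, yields a nearby element of $M_{A_\epsilon}$). Now, suppose $T$ is not smooth; the characterization's reverse direction (used via its contrapositive) would give distinct $x_1, x_2 \in M_T$ with $x_1 \neq \pm x_2$. Setting $d := \min\{\|x_1-x_2\|,\, \|x_1+x_2\|\} > 0$ and choosing $\epsilon \in (0, d/2)$, the uniform $\epsilon-$BPB property applied at both $x_1$ and $x_2$ yields $u_1, u_2 \in \{\pm y_\epsilon\}$ with $\|u_i - x_i\| < \epsilon$. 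A case analysis on the sign choices, combined with the triangle inequality, yields either $\|x_1 - x_2\| < 2\epsilon < d$ or $\|x_1 + x_2\| < 2\epsilon < d$, contradicting the definition of $d$.

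The main obstacle is the reverse direction of the characterization, i.e., that $M_T = \{\pm x_0\}$ (with $\mathbb{Y}$ smooth) implies $T$ is smooth. In general Banach spaces without compactness, $J(T)$ could contain non-elementary support functionals even when $M_T$ is a single antipodal pair, so this direction typically relies on a compactness-type assumption or on extracting information from the BPB hypothesis (compare \cite[Th. 4.2]{PSG}). A cleaner alternative that avoids this obstacle would be to pass to weak-$*$ limits of the unique support functionals $\psi_\epsilon \in J(A_\epsilon)$, exploit $|\psi_\epsilon(T) - 1| \leq \|T - A_\epsilon\| < \epsilon \to 0$ to place cluster points in $J(T)$, and use the rigidity forced by $M_T = \{\pm x_0\}$ to identify all such cluster points with a single $\Phi_{x_0}$, thereby showing directly that $J(T)$ is a singleton.
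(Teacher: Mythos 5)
Your first step --- using the smooth approximants $A_\epsilon$ (whose norm attainment sets are single antipodal pairs $\{\pm y_\epsilon\}$, since $\mathbb{Y}$ is smooth) together with the triangle inequality to rule out two points $x_1,x_2\in M_T$ with $x_1\neq\pm x_2$ --- is sound and is essentially the paper's opening move, which establishes $M_T=\{\pm x_0\}$. The gap is exactly where you suspect it is, and it is fatal to the argument as written: the step ``suppose $T$ is not smooth; the reverse direction's contrapositive gives distinct $x_1,x_2\in M_T$ with $x_1\neq\pm x_2$'' relies on the implication ``$M_T=\{\pm x_0\}$ and $\mathbb{Y}$ smooth $\Rightarrow$ $T$ smooth,'' which is false for general Banach spaces (no compactness or reflexivity is assumed here). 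A non-smooth $T$ can perfectly well have $M_T=\{\pm x_0\}$; the extra support functionals in $J(T)$ then come from norming sequences that stay bounded away from $\pm x_0$, not from extra norm-attaining points. So your contradiction argument only re-proves $M_T=\{\pm x_0\}$ and never reaches smoothness. The proposed repair via weak-$*$ cluster points of the unique $\psi_\epsilon\in J(A_\epsilon)$ does not close the gap either: it shows that a particular family of elements of $J(T)$ (the cluster points) coincide with $\Phi_{x_0}$, but gives no control over the remaining elements of $J(T)$, so it cannot show $J(T)$ is a singleton.

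The missing idea, which is the substance of the paper's proof, is to extract from the BPB hypothesis a statement about \emph{all norming sequences} of $T$, not just about $M_T$. Taking $\epsilon=\frac1k$ with constant $\delta(\frac1k)$ and smooth approximant $A_k$ with $M_{A_k}=\{\pm y_k\}$: any norming sequence $\{x_n\}$ eventually satisfies $\|Tx_{n_k}\|>1-\delta(\frac1k)$, hence $x_{n_k}$ is within $\frac1k$ of $\pm y_k$; since $x_0$ is also within $\frac1k$ of $\pm y_k$, one gets $\|x_{n_k}\mp x_0\|<\frac2k$, so every norming sequence has a subsequence converging to $ax_0$ with $|a|=1$. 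This norming-sequence condition, together with $M_T=\{\pm x_0\}$ and smoothness of $Tx_0$ in $\mathbb{Y}$, is precisely the hypothesis of the characterization of operator smoothness in \cite[Th. 3.4]{SPMR}, which is what actually delivers the conclusion. Without that additional condition (or some citation supplying an equivalent), your proof cannot be completed.
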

\begin{proof}
Since $M_T\neq \emptyset,$ let $\pm x_0\in M_T.$ Following the same method as in \cite[Prop. 2.7]{Sa}, it can be shown that $M_T=\{\pm x_0\}.$ We claim that if $\{x_n\}$ is any norming sequence for $T$ then $\{x_n\}$ has a convergent subsequence converging to $ax_0,$ where $|a|=1.$ Let for each $k\in \mathbb{N},$ $\delta(\frac{1}{k})$ be the constant of non-trivial uniform $\frac{1}{k}-$BPB approximation say $A_k$, corresponding to the value $\epsilon=\frac{1}{k}.$ Clearly, $M_{A_k}\neq \emptyset.$ By hypothesis, $A_k$ is smooth and so by \cite[Th. 3.3]{SPMR}, $M_{A_k}=\{\pm y_k\},$ say. Now, for each $k\in \mathbb{N},$ there exists $x_{n_k}$ such that $\|Tx_{n_k}\|>1-\delta(\frac{1}{k}).$ Then either $\|y_k-x_{n_k}\|<\frac{1}{k}$ or $\|y_k+x_{n_k}\|<\frac{1}{k}.$ Also $\|Tx_0\|>1-\delta(\frac{1}{k})$ gives that either $\|y_k-x_0\|<\frac{1}{k}$ or $\|y_k+x_0\|<\frac{1}{k}.$ First suppose that $\|y_k-x_{n_k}\|<\frac{1}{k}$ and $\|y_k-x_0\|<\frac{1}{k}.$ Then $\|x_{n_k}-x_0\|\leq \|x_{n_k}-y_k\|+\|y_k-x_0\|<\frac{2}{k}.$ Also if we suppose that $\|y_k-x_{n_k}\|<\frac{1}{k}$ and $\|y_k+x_0\|<\frac{1}{k},$ then we have, $\|x_{n_k}+x_0\|\leq \|x_{n_k}-y_k\|+\|y_k+x_0\|<\frac{2}{k}.$ Similarly, other cases imply that either $\|x_{n_k}-x_0\|<\frac{2}{k}$ or $\|x_{n_k}+x_0\|<\frac{2}{k}.$ Thus, $\{x_{n_k}\}$ has a subsequence converging to $a x_0,$ where $|a|=1.$ This proves our claim. Now, by \cite[Th. 3.4]{SPMR}	we can conclude that $T$ is smooth.	
\end{proof}

\begin{theorem}\label{th-smooth}
	Let $\mathbb{X}$ be a reflexive, Kadets-Klee Banach space and let $\mathbb{Y}$ be a smooth Banach space such that $dim(\mathbb{Y})>1$. Let $T\in \mathbb{K}(\mathbb{X},\mathbb{Y})$ be such that $\|T\|=1.$ Then $T$ is smooth if and only if for every $\epsilon>0,$ $T$ admits a non-trivial uniform $\epsilon-$BPB approximation which is also smooth. 
\end{theorem}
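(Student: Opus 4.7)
The plan is to prove the two implications separately. The reverse implication is immediate from Theorem~\ref{th-sm}: if every $\epsilon > 0$ admits a smooth non-trivial uniform $\epsilon$-BPB approximation of $T$, the hypothesis of Theorem~\ref{th-sm} is satisfied and $T$ is smooth. All the substance therefore lies in the forward direction, where I must exhibit, for each $\epsilon > 0$, a smooth operator $A_\epsilon \neq T$ that is a uniform $\epsilon$-BPB approximation of $T$.

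For the forward direction, my plan is first to extract structural data from smoothness of $T$. Since $T$ is a smooth compact operator between a reflexive Kadets-Klee space and the smooth space $\mathbb{Y}$, the characterization of smooth compact operators (cf.\ \cite[Th.~3.3]{SPMR}) yields $M_T = \{\pm x_0\}$ for a single pair $\pm x_0 \in S_\mathbb{X}$, and $Tx_0$ is automatically smooth because $\mathbb{Y}$ is smooth. Choose $f \in J(x_0)$ by Hahn-Banach and define the rank one operator $S x = f(x)\, T x_0$, which satisfies $\|S\| = 1$ and $S x_0 = T x_0$. For $n \geq 2$, set $A_n = (1 - \tfrac{1}{n}) T + \tfrac{1}{n} S$.

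The verification I plan to carry out is then: $A_n$ is compact (sum of compacts); the elementary bound $\|A_n x\| \leq (1 - \tfrac{1}{n})\|Tx\| + \tfrac{1}{n}|f(x)| \leq 1$ gives $\|A_n\| \leq 1$, with equality forcing both $\|Tx\| = 1$ and $|f(x)| = 1$, hence $x \in M_T = \{\pm x_0\}$; combined with $A_n(\pm x_0) = \pm Tx_0$, this yields $\|A_n\| = 1$ and $M_{A_n} = \{\pm x_0\}$. Since $A_n x_0 = T x_0$ is smooth in $\mathbb{Y}$, applying \cite[Th.~3.3]{SPMR} in the reverse direction shows that $A_n$ is smooth. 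The norm estimate $\|A_n - T\| = \tfrac{1}{n}\|S - T\| \leq \tfrac{2}{n}$ lets me choose $n$ large so that $\|A_n - T\| < \epsilon$, and the uniform $\epsilon$-BPB condition for $A_n$ follows from Theorem~\ref{th-appnorm} applied to $T$ together with $\{\pm x_0\} \subseteq M_{A_n}$.

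The final subtlety is non-triviality. If $T$ is not rank one, then $A_n \neq T$ automatically because $S$ is rank one. The degenerate case where $T$ itself is rank one must be handled separately: there one has $Tx = g(x)w$, and for an appropriate choice of $f \in J(x_0)$ we may have $S = T$, in which case I fall back on Theorem~\ref{th-rank1} to obtain a rank one non-trivial approximation $A_\epsilon x = f(x) u$ with $u \in S_\mathbb{Y}$ close to $Tx_0$, and verify smoothness of $A_\epsilon$ from $M_{A_\epsilon} = M_f = M_T = \{\pm x_0\}$ together with smoothness of $A_\epsilon x_0 = u$ in $\mathbb{Y}$. The main obstacle I anticipate is the clean identification $M_{A_n} = \{\pm x_0\}$ (rather than some larger set contained in $M_f$), which hinges precisely on the equality analysis in the triangle inequality noted above and on $M_T$ being a two-point set.
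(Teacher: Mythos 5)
Your proposal is correct and follows essentially the same route as the paper: the reverse implication via Theorem~\ref{th-sm}, the rank-one case via Theorem~\ref{th-rank1}, and for $rank(T)>1$ your operator $A_n=(1-\tfrac1n)T+\tfrac1n S$ with $Sx=f(x)Tx_0$, $f\in J(x_0)$, is exactly the paper's $A_n(\alpha x_0+h)=\alpha Tx_0+(1-\tfrac1n)Th$ written with $H=\ker f$, and your equality analysis forcing $M_{A_n}=\{\pm x_0\}$ mirrors the paper's. The only cosmetic difference is that you verify the uniform $\epsilon$-BPB condition directly from Theorem~\ref{th-appnorm} rather than citing \cite[Th. 2.6]{Sa}, which is harmless.
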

\begin{proof}
	The sufficient part follows from Theorem \ref{th-sm}. We only prove the necessary part of the theorem. If $rank(T)=1,$ then the result follows from Theorem \ref{th-rank1}. Let $rank(T)>1.$ Since $T$ is smooth, we have $M_T=\{\pm x_0\}$ for some $x_0\in S_{\mathbb{X}}.$ Let $H$ be a hyperspace such that $x_0\perp_B H.$ From \cite[Th. 2.6]{Sa} it follows that there exists a natural number $n$ such that $A_n\in S_{\mathbb{K}(\mathbb{X},\mathbb{Y})}$ is a non-trivial uniform $\epsilon-$BPB approximation of $T,$ where $A_n$ is defined as follows:
	 $$A_n(\alpha x_0+h)=\alpha Tx_0+(1-\frac{1}{n})Th,~\text{ where~} \alpha \in \mathbb{R}, h\in H.$$
	   We only show that $M_{A_n}=\{\pm x_0\}.$ Let $z\in M_{A_n}.$ Then $z=\alpha x_0+h,$ for some $\alpha \in \mathbb{R}$ and $h\in H.$ Now, $1=\|z\|=\|\alpha x_0+h\|\geq \|\alpha x_0\|=|\alpha|$ and 
	\begin{eqnarray*}
	1=\|A_nz\|&=&\|\alpha Tx_0+(1-\frac{1}{n})Th\|\\
	&=&\|(1-\frac{1}{n})T(\alpha x_0+h)+\frac{1}{n}\alpha Tx_0\|\\
	&\leq & (1-\frac{1}{n})\|T(\alpha x_0+h)\|+\frac{1}{n}|\alpha|\|Tx_0\|\\
	&\leq& (1-\frac{1}{n})\|Tz\|+\frac{1}{n}|\alpha|\\
	&\leq &(1-\frac{1}{n})+\frac{1}{n}\\
	&\leq & 1.
	\end{eqnarray*}
This implies that $\|Tz\|=1,$ i.e., $z\in M_T=\{\pm x_0\}.$ Thus, $M_{A_n}=\{\pm x_0\}.$ Since $\mathbb{Y}$ is smooth, $A_nx_0$ is smooth. Hence, by \cite[Th. 4.1]{PSG} $A_n$ is smooth. This completes the proof of the theorem.
\end{proof}

\begin{theorem}
	Let $\mathbb{X}, \mathbb{Y}$ be Banach spaces such that $dim(\mathbb{Y})>1$. Let $T\in S_{\mathbb{L}(\mathbb{X},\mathbb{Y})}$ be such that $M_T=\{\pm x_0\}.$ Suppose that $\sup\{\|Tx\|:x\in C\}<1$ for all closed subset $C$ of $S_{\mathbb{X}}$ with $d(\pm x_0, C)>0.$ Then for each $\epsilon>0,$ $T$ admits a non-trivial uniform $\epsilon-$BPB approximation $A_{\epsilon}$ such that $M_{A_\epsilon}=M_T.$
\end{theorem}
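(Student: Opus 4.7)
The strategy is to mirror the construction used in the preceding L-summand theorem, replacing the L-projection by a supporting functional $f\in J(x_0)$, and to handle the degenerate rank-one case separately via the perturbation introduced in Theorem~\ref{th-rank1}.

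The first step is to extract from the hypothesis the Bishop--Phelps--Bollob\'as type conclusion $M_T(\delta(\epsilon))\subseteq B(x_0,\epsilon)\cup B(-x_0,\epsilon)$, which will play here the role that Theorem~\ref{th-appnorm} played in earlier arguments. This is immediate: the set $C_\epsilon:=S_\mathbb{X}\setminus (B(x_0,\epsilon)\cup B(-x_0,\epsilon))$ is closed (if nonempty) with $d(\pm x_0,C_\epsilon)\geq \epsilon>0$, so $\alpha:=\sup\{\|Tz\|:z\in C_\epsilon\}<1$ by hypothesis; setting $\delta(\epsilon):=1-\alpha$ yields the claim, since any $z$ with $\|Tz\|>1-\delta(\epsilon)$ must fail to lie in $C_\epsilon$.

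The second step constructs $A_\epsilon$ by splitting on the rank of $T$. If $T$ has rank one, write $Tx=g(x)v$ for some $g\in S_{\mathbb{X}^*}$ and $v\in S_\mathbb{Y}$; then $M_g=M_T=\{\pm x_0\}$. Using $\dim\mathbb{Y}>1$ together with the scalars being real, pick $u\in S_\mathbb{Y}$ with $0<\|u-v\|<\epsilon/2$, and set $A_\epsilon x:=g(x)u$. The verification is identical to that in Theorem~\ref{th-rank1}, with the first step above replacing the invocation of Theorem~\ref{th-appnorm}. If instead $T$ has rank at least two, pick any $f\in J(x_0)$ (nonempty by Hahn--Banach) and define $A_n x:=(1-\tfrac{1}{n})Tx+\tfrac{1}{n}f(x)Tx_0$ for $n\geq 2$. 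A routine triangle-inequality estimate yields $\|A_n\|\leq 1$, and $A_n x_0=Tx_0$ upgrades this to $\|A_n\|=1$ with $\pm x_0\in M_{A_n}$; for any $z\in M_{A_n}$ the same estimate becomes an equality, forcing $\|Tz\|=|f(z)|=1$ and hence $z\in M_T=\{\pm x_0\}$, so $M_{A_n}=M_T$. Since $f(\cdot)Tx_0$ is rank one while $T$ is not, $A_n-T=\tfrac{1}{n}(f(\cdot)Tx_0-T)$ is nonzero with $\|A_n-T\|\leq 2/n$; choosing $n>2/\epsilon$ and combining with the first step produces the required $A_\epsilon$.

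The main subtlety is the case split itself: the higher-rank construction $A_n$ coincides with $T$ precisely when $T=f(\cdot)Tx_0$, i.e., when $T$ is itself rank one with image spanned by $Tx_0$, and this is exactly the scenario absorbed by the rank-one branch via Theorem~\ref{th-rank1}. Once this separation is noted, both halves are straightforward verifications.
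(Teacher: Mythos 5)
Your proof is correct and follows essentially the same route as the paper: the same extraction of $M_T(\delta(\epsilon))\subseteq B(x_0,\epsilon)\cup B(-x_0,\epsilon)$ from the hypothesis, the same rank-one branch via Theorem~\ref{th-rank1}, and in the higher-rank case your operator $A_nx=(1-\tfrac1n)Tx+\tfrac1nf(x)Tx_0$ is literally the paper's $A_n(\alpha x_0+h)=\alpha Tx_0+(1-\tfrac1n)Th$ once one takes $H=\ker f$ for $f\in J(x_0)$ and notes $\alpha=f(x)$. The only difference is presentational: you carry out explicitly the norm and norm-attainment verifications that the paper delegates to \cite[Th. 2.6]{Sa} and Theorem~\ref{th-smooth}.
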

\begin{proof}
	Let $\epsilon>0$ be given. Let $H$ be a hyperspace of $\mathbb{X}$ such that $x_0\perp_B H.$ We first show that there exists $\delta(\epsilon)>0$ such that $M_T(\delta(\epsilon))\subseteq B(x_0,\epsilon)\cup B(-x_0,\epsilon).$ Let $C=S_{\mathbb{X}}\setminus \{B(x_0,\epsilon)\cup B(-x_0,\epsilon)\}.$ Then $C$ is a closed subset of $S_{\mathbb{X}}$ with $d(\pm x_0,C)>0.$ Therefore, $\sup\{\|Tx\|:x\in C\}<1.$ Choose $\delta>0$ such that $\sup\{\|Tx\|:x\in C\}<1-\delta.$ Hence, if $z\in M_T(\delta),$ then $\|Tz\|>1-\delta,$ i.e., $z\notin C.$ Thus, $z\in B(x_0,\epsilon)\cup B(-x_0,\epsilon).$ Let $\delta(\epsilon)=\delta.$ Then $M_T(\delta(\epsilon))\subseteq B(x_0,\epsilon)\cup B(-x_0,\epsilon).$ Now, if $T$ is rank one operator, then proceeding similarly as in Theorem \ref{th-rank1}, we can show that $T$ admits a non-trivial uniform $\epsilon-$BPB approximation $A_\epsilon$ such that $M_{A_\epsilon}=M_T.$  If rank$(T)>1,$ then proceeding similarly as in \cite[Th. 2.6]{Sa} it can be shown that there exits a natural number $n$ such that $A_n\in S_{\mathbb{L}(\mathbb{X},\mathbb{Y})}$ is a non-trivial uniform $\epsilon-$BPB approximation of $T,$ where $A_n$ is defined as follows:
	$$A_n(\alpha x_0+h)=\alpha Tx_0+(1-\frac{1}{n})Th,~\text{ where~} \alpha \in \mathbb{R}, h\in H.$$
	Proceeding similarly as in Theorem \ref{th-smooth}, it can be shown that $M_{A_n}=\{\pm x_0\}.$ This completes the proof of the theorem.
\end{proof}

As the final result of this section, we obtain an improvement of \cite[Th. 2.10]{Sa}.
\begin{theorem}
	Let $\mathbb{X}$ be a reflexive, smooth Banach space. Let $\mathcal{F}$ be the class of all norm one smooth operators in $\mathbb{K}(\mathbb{X},\mathbb{X}).$ Then the pair $(\mathbb{X},\mathbb{X})$ does not have the uniform sBPBp with respect to $\mathcal{F}.$  
\end{theorem}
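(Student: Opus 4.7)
The plan is to proceed by contradiction. Suppose $(\mathbb{X},\mathbb{X})$ has the uniform sBPBp with respect to $\mathcal{F},$ with associated modulus $\eta(\cdot).$ Fix $\epsilon_0\in (0,1)$ and set $\eta_0=\eta(\epsilon_0)>0.$ Every $T\in\mathcal{F}$ is a norm one smooth compact operator on the smooth space $\mathbb{X},$ so by the same reasoning as in the proof of Theorem \ref{th-smooth} (combined with \cite[Th.~3.3]{SPMR} and \cite[Th.~4.2]{PSG}), one has $M_T=\{\pm z_T\}$ for a unique $z_T\in S_{\mathbb{X}}.$ The assumed uniform sBPBp then says: whenever $T\in\mathcal{F}$ and $w\in S_{\mathbb{X}}$ satisfy $\|Tw\|>1-\eta_0,$ we must have $w\in B(z_T,\epsilon_0)\cup B(-z_T,\epsilon_0).$

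To force a contradiction, the strategy is to exhibit, for each $n\in\mathbb{N},$ a smooth compact norm one operator $T_n$ and a unit vector $w_n$ with $\|T_n w_n\|>1-\frac{1}{n}$ but $\|w_n-z_{T_n}\|\geq\epsilon_0$ and $\|w_n+z_{T_n}\|\geq\epsilon_0.$ Taking $n$ so large that $\frac{1}{n}<\eta_0$ then contradicts the existence of a single uniform modulus $\eta_0.$ The construction: pick $x_0,y_0\in S_{\mathbb{X}}$ with $\|y_0\pm x_0\|\geq 2\epsilon_0$ (possible as $\dim\mathbb{X}\geq 2$) and let $f_0\in S_{\mathbb{X}^*}$ be the unique supporting functional at $x_0,$ which exists by smoothness of $\mathbb{X}.$ For each $n,$ I plan to build $T_n\in S_{\mathbb{K}(\mathbb{X},\mathbb{X})}$ with $M_{T_n}=\{\pm x_0\}$ and $\|T_n y_0\|>1-\frac{1}{n},$ and then set $w_n=y_0.$ In a Hilbert space (or $\ell_p$), the required $T_n$ is simply the diagonal operator with singular values $1,1-\frac{1}{n},\frac{1}{n^2},\ldots$ In the general reflexive smooth setting, $T_n$ will take the form $T_n=f_0\otimes x_0+(1-\frac{1}{n})K_n,$ where $K_n\in\mathbb{K}(\mathbb{X},\mathbb{X})$ is a compact perturbation annihilating $x_0$ and pushing $y_0-f_0(y_0)x_0$ near the unit sphere, in the spirit of the perturbation used in the proof of Theorem \ref{th-smooth}. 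Once $M_{T_n}=\{\pm x_0\}$ is confirmed, smoothness of $T_n$ as an operator follows from smoothness of $T_n x_0=x_0$ in $\mathbb{X}$ via \cite[Th.~4.1]{PSG}.

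The main obstacle will be verifying all four desiderata simultaneously, namely $\|T_n\|=1,$ compactness of $T_n,$ the exact equality $M_{T_n}=\{\pm x_0\},$ and $\|T_n y_0\|>1-\frac{1}{n},$ in an abstract reflexive smooth Banach space. Compactness and the operator norm bound $\|T_n\|\leq 1$ are easy from the form of $T_n;$ the delicate point is to prevent the compact perturbation $K_n$ from creating new norm-attaining directions. Here one exploits the Birkhoff--James orthogonality $x_0\perp_B\ker(f_0)$ (a consequence of $f_0$ being the supporting functional of $x_0$) together with reflexivity to ensure that the supremum of $\|T_n z\|$ over $z\in S_{\mathbb{X}}\setminus\{\pm x_0\}$ is strictly less than $1,$ while still approaching $1$ along the specific direction $y_0$ as $n\to\infty.$ This is where the hypotheses of reflexivity and smoothness are both crucial, and once the construction is in place the contradiction is immediate.
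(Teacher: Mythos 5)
Your overall strategy coincides with the paper's: produce a sequence of norm-one smooth operators $T_n$, all with norm attainment set $\{\pm x_0\}$, together with a fixed unit vector $y_0$ bounded away from $\pm x_0$ on which $\|T_ny_0\|\to 1$; this defeats any single modulus $\eta(\epsilon)$ and the contradiction is then immediate. The gap is that the construction carrying this strategy is never actually produced, and the tool you propose for the one genuinely delicate step is too weak. You take $x_0$ to be an arbitrary smooth point with supporting functional $f_0$ and plan to use $x_0\perp_B\ker(f_0)$ to conclude $M_{T_n}=\{\pm x_0\}$ exactly. But plain Birkhoff--James orthogonality only yields the non-strict inequality $\|\alpha x_0+h\|\geq|\alpha|$, so the resulting estimate for $\|T_nz\|$ is a chain of inequalities that may be saturated at points $z=\alpha x_0+h$ with $h\neq 0$. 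If that happens, either $M_{T_n}$ is strictly larger than $\{\pm x_0\}$ (so $T_n$ need not be smooth and falls out of $\mathcal{F}$), or $T_n$ attains its norm near $y_0$ and the contradiction evaporates. What is needed is the strict inequality $\|\alpha x_0+h\|>\|\alpha x_0\|$ for $h\neq 0$, i.e.\ strong Birkhoff--James orthogonality $x_0\perp_{SB}H_0$, and this is exactly what the paper secures by choosing $x_0$ to be an \emph{exposed} point of $B_{\mathbb{X}}$: reflexivity guarantees extreme points of $B_{\mathbb{X}}$, and Klee's theorem \cite{K} gives exposed points dense in them. With that choice the operator $A_n(\alpha x_0+h)=\alpha x_0+(1-\tfrac1n)h$ satisfies $\|A_n\|=1$ and $M_{A_n}=\{\pm x_0\}$ by a two-line computation, any $h_0\in H_0\cap S_{\mathbb{X}}$ has $\|A_nh_0\|=1-\tfrac1n\to 1$ while $\|x_0\pm h_0\|>1>\epsilon$, and the proof closes. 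Until you specify $K_n$ and establish $M_{T_n}=\{\pm x_0\}$ by a strict-orthogonality argument of this kind, your text is a plan rather than a proof.

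A secondary remark: you correctly insist that the $T_n$ belong to $\mathbb{K}(\mathbb{X},\mathbb{X})$, and your ansatz $T_n=f_0\otimes x_0+(1-\tfrac1n)K_n$ with $K_n$ compact would deliver that, whereas the paper's $A_n=(1-\tfrac1n)I+\tfrac1n\,f_0\otimes x_0$ is compact only when $\dim\mathbb{X}<\infty$. So in the infinite-dimensional case your route, if completed, would actually repair a point the paper glosses over; but as written neither the existence of a suitable $K_n$ nor the identification of $M_{T_n}$ is established.
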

\begin{proof}
	Since $\mathbb{X}$ is a reflexive, smooth Banach space, set of all extreme points of $B_{\mathbb{X}}$ is non-empty. Again the set of all exposed points is dense in the set of all extreme points \cite{K}. Suppose $x_0$ is an exposed point of $B_{\mathbb{X}}.$ Then there exists a hyperspace $H_0$ of $\mathbb{X}$ such that $x_0\perp_{SB}H_0.$ Now, for each $n \in \mathbb{N},$ define $A_n:\mathbb{X}\to\mathbb{X}$ by
	 $$A_n(\alpha x_0+h)=\alpha x_0+(1-\frac{1}{n})h,~\text {where}~ \alpha \in \mathbb{R}, h\in H_0.$$ 
	 Observe that if $\alpha \neq 0,h\neq 0,$ then $\|\alpha x_0+h\|>\|\alpha x_0\|,$ since $x_0\perp_{SB}h.$ If $\alpha x_0+h\in S_{\mathbb{X}},$ then
	 \begin{eqnarray*}
	  \|A_n(\alpha x_0+h)\|&=&\|\alpha x_0+(1-\frac{1}{n})h\|	\\
	                      &=& \|(1-\frac{1}{n})(\alpha x_0+h)+\frac{1}{n}\alpha x_0\|\\
	                      &\leq& (1-\frac{1}{n})\|\alpha x_0+h\|+\frac{1}{n}\|\alpha x_0\|\\
	                      &\leq& (1-\frac{1}{n})\|\alpha x_0+h\|+\frac{1}{n}\|\alpha x_0+h\|,~(\text{equality~holds~iff~}h=0)\\	
	                      &=&\|\alpha x_0+h\|.
	 \end{eqnarray*}
     Therefore, for each $n,$ $\|A_n\|=1$ and $M_{A_n}=\{\pm x_0\}.$ Since $\mathbb{X}$ is reflexive, smooth Banach space, by \cite[Th. 4.1]{PSG}, $A_n$ is smooth in $\mathbb{K}(\mathbb{X},\mathbb{X})$ for each $n,$ i.e., $A_n\in \mathcal{F}.$ Suppose $h_0\in H_0\cap S_{\mathbb{X}}.$ Clearly, $\|x_0+h_0\|> 1$ and $\|x_0-h_0\|> 1.$ If possible, suppose the pair $(\mathbb{X},\mathbb{X})$ has uniform sBPBp with respect to $\mathcal{F}.$ Choose $\epsilon\in (0,1).$ Let $\eta(\epsilon)$ be the constant of uniform sBPBp of $(\mathbb{X},\mathbb{X})$ with respect to $\mathcal{F}.$ Since $\|A_nh_0\|\to 1,$ there exists $k\in \mathbb{N}$ such that $\|A_kh_0\|>1-\eta(\epsilon).$ Since $M_{A_k}=\{\pm x_0\},$ either $\|x_0-h_0\|<\epsilon$ or $\|x_0+h_0\|<\epsilon,$ which contradicts the choice of $\epsilon.$ Therefore, the pair $(\mathbb{X},\mathbb{X})$ does not have the uniform sBPBp with respect to $\mathcal{F}.$ 
\end{proof}

 We end this section with an open question that we could not answer. In view of the results obtained here, it is apparent that an answer to the following question will be useful in having a better understanding of the uniform $\epsilon-$BPB approximations of operators as well as the norm attainment set of an operator.\\
 
 \textbf{Question:}
 Is it true that each finite-dimensional Banach space $\mathbb{X}$ satisfies Property $(P)?$ What about the infinite-dimensional case? \\

\section{Applications of uniform $\epsilon-$BPB approximation on some particular Banach spaces}

In this section we deal with operators on some well studied Banach spaces like $\ell_{\infty}^{n}$ and $\ell_{1}^{n},$ in view of the preservation of the norm attainment set under uniform $\epsilon-$BPB approximations. We note that in case of norm one operators which are not extreme contractions, the situation is completely understood by virtue of Theorem \ref{th-notextreme}. Therefore, we restrict ourselves mostly to the case of extreme contractions. We begin with extreme contractions in $\mathbb{L}(\ell_{\infty}^n, \ell_{\infty}^n ).$ First we require the following lemma which can be proved by using \cite[Lemma 2.8]{Sh} and the fact that on a finite-dimensional Banach space, an operator is an extreme contraction if and only if the adjoint of the operator is an extreme contraction.
\begin{lemma}\label{lemma-infinity}
	Let $\mathbb{X}=\ell_{\infty}^n$ and $T\in S_{\mathbb{L}(\mathbb{X},\mathbb{X})}.$  If $T$ is an extreme contraction, then each row of corresponding matrix with respect to standard ordered basis contains exactly one non-zero element from $S=\{1, -1\}$ and remaining elements in the same row are all zero.
\end{lemma}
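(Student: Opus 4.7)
The plan is to reduce the statement to its dual counterpart on $\ell_1^n$ via the adjoint operation. Concretely, I would first observe that $(\ell_\infty^n)^*$ is isometrically isomorphic to $\ell_1^n$, with the standard ordered basis of $\ell_1^n$ being precisely the dual basis of the standard ordered basis of $\ell_\infty^n$ under the canonical pairing. Consequently, for any $T \in S_{\mathbb{L}(\ell_\infty^n,\ell_\infty^n)}$, the adjoint $T^*$ is a norm one operator on $\ell_1^n$, and its matrix representation with respect to the standard ordered basis of $\ell_1^n$ is exactly the transpose of the matrix of $T$ with respect to the standard ordered basis of $\ell_\infty^n$.

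Next, I would appeal to the quoted finite-dimensional duality principle for extreme contractions, namely that $T$ is an extreme contraction in $\mathbb{L}(\ell_\infty^n,\ell_\infty^n)$ if and only if $T^*$ is an extreme contraction in $\mathbb{L}(\ell_1^n,\ell_1^n)$. This transports the hypothesis on $T$ to the corresponding hypothesis on $T^*$ acting on $\ell_1^n$.

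Now I would apply \cite[Lemma 2.8]{Sh}, which (by its placement in Shapiro's work on extreme contractions of $\ell_1^n$) characterizes the extreme contractions of $\ell_1^n$ as precisely those operators whose matrix, with respect to the standard ordered basis, has exactly one non-zero entry in each \emph{column}, with that non-zero entry belonging to $\{1,-1\}$. Applying this to $T^*$ yields the column condition on the matrix of $T^*$.

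The final step is merely bookkeeping: transposing back, the column condition on the matrix of $T^*$ is identical to the row condition on the matrix of $T$ claimed in the statement. The only subtlety to watch is the alignment of basis conventions — since the standard bases of $\ell_\infty^n$ and $\ell_1^n$ are biorthogonal, the identity "matrix of $T^*$ equals transpose of matrix of $T$" holds without any rescaling, so no complications intrude. I do not anticipate a genuine obstacle here; the content of the lemma is entirely borrowed from \cite[Lemma 2.8]{Sh} together with the standard adjoint-extreme-contraction duality.
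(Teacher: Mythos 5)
Your proposal is correct and follows essentially the same route as the paper, which likewise derives the row condition from \cite[Lemma 2.8]{Sh} via the standard fact that on finite-dimensional spaces $T$ is an extreme contraction if and only if $T^*$ is, together with the identification $(\ell_\infty^n)^*\cong \ell_1^n$ and the transpose relation between the matrices of $T$ and $T^*$. (One trivial slip: the reference \cite{Sh} is to M.~Sharir, not Shapiro.)
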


We are now ready to present the desired result in $\mathbb{L}(\ell_{\infty}^n, \ell_{\infty}^n).$

\begin{theorem}\label{infinity-contraction}
	Suppose $\mathbb{X}=\ell_{\infty}^n$ and $T\in S_{\mathbb{L}(\mathbb{X},\mathbb{X})}$ is an extreme contraction which is not an isometry. Then $T$ admits a non-trivial uniform $\epsilon-$BPB approximation $A_{\epsilon}$ with $M_T=M_{A_{\epsilon}},$  for each $\epsilon>0.$
\end{theorem}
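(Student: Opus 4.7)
The plan is to produce $A_\epsilon$ by modifying $T$ in a single row, exploiting the pigeonhole structure forced by $T$ not being an isometry, in such a way that $M_{A_\epsilon}$ equals $M_T$ exactly and the norm is preserved. By Lemma \ref{lemma-infinity}, the matrix of $T$ in the standard basis has, in each row $i$, a single nonzero entry $\eta_i \in \{+1,-1\}$ at some column $\sigma(i)$, where $\sigma:\{1,\ldots,n\}\to\{1,\ldots,n\}$. Since $T$ is not an isometry, $\sigma$ is not a bijection, so by pigeonhole there exist $i_1 \neq i_2$ with $\sigma(i_1)=\sigma(i_2)=:j_1$ and a column index $k \notin \sigma(\{1,\ldots,n\})$.

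Fix $\delta \in (0,\epsilon/2)$ and let $A_\epsilon$ agree with $T$ in all rows except row $i_1$; in row $i_1$, place the entry $(1-\delta)\eta_{i_1}$ at column $j_1$ and $\delta$ at column $k$. Each row of $A_\epsilon$, viewed as a linear functional on $\ell_\infty^n$, has dual norm equal to the $\ell_1$-norm of its entries, which remains $1$. Hence $\|A_\epsilon\|=1$. Moreover $\|A_\epsilon-T\|$ equals the $\ell_1$-norm of the difference in row $i_1$, namely $\delta+\delta=2\delta<\epsilon$, and $A_\epsilon\neq T$.

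To check $M_{A_\epsilon}=M_T$, observe first that $\|Tx\|_\infty=\max_{j\in\sigma([n])}|x_j|$ for $x\in S_\mathbb{X}$. For the forward inclusion, given $x\in M_T$ pick $j$ with $|x_j|=1$: if $j\neq j_1$, take any $i$ with $\sigma(i)=j$, which forces $i\neq i_1$, so $(A_\epsilon x)_i=(Tx)_i$ has modulus $1$; if $j=j_1$, the backup row $i_2$ satisfies $(A_\epsilon x)_{i_2}=\eta_{i_2}x_{j_1}$ with modulus $1$. Either way $x\in M_{A_\epsilon}$. Conversely, if $x\in M_{A_\epsilon}$ with $|(A_\epsilon x)_i|=1$ for some $i\neq i_1$, then $|(Tx)_i|=1$ and $x\in M_T$. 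If $i=i_1$, then
\[
1=|(1-\delta)\eta_{i_1}x_{j_1}+\delta x_k|\leq (1-\delta)|x_{j_1}|+\delta|x_k|\leq 1,
\]
which forces $|x_{j_1}|=|x_k|=1$; since $j_1=\sigma(i_1)\in\sigma([n])$, again $x\in M_T$.

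Finally, since $\ell_\infty^n$ is finite-dimensional, hence reflexive and trivially Kadets-Klee, and $T$ is compact, Theorem \ref{th-appnorm} yields $\delta(\epsilon)>0$ with $M_T(\delta(\epsilon))\subseteq\bigcup_{x\in M_T}B(x,\epsilon)$. Because $M_{A_\epsilon}=M_T$, any $x_0\in S_\mathbb{X}$ with $\|Tx_0\|>1-\delta(\epsilon)$ is within $\epsilon$ of some $u_0\in M_{A_\epsilon}$, confirming that $A_\epsilon$ is a non-trivial uniform $\epsilon$-BPB approximation of $T$. The main subtlety is arranging the single-row perturbation so that all three requirements $\|A_\epsilon\|=1$, $M_{A_\epsilon}=M_T$, and $A_\epsilon\neq T$ hold simultaneously; the non-injectivity and the resulting non-surjectivity of $\sigma$ (both automatic for non-isometric extreme contractions in $\ell_\infty^n$) provide exactly the spare row $i_2$ and spare column $k$ that make the construction work.
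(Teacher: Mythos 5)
Your proof is correct and follows essentially the same route as the paper: invoke Lemma \ref{lemma-infinity}, use the non-isometry hypothesis to find a column of $T$ hit by two rows, perturb a single one of those rows so that the norm and the norm attainment set are unchanged, and finish with Theorem \ref{th-appnorm}. The only difference is cosmetic: the paper simply shrinks the duplicated entry to $1-\frac{\epsilon}{2}$ (so that row can no longer attain the norm, which the spare row compensates for), whereas you redistribute the mass $\delta$ into a column outside the range of $\sigma$ to keep the row's $\ell_1$-norm equal to $1$, at the cost of the extra equality-case check in the backward inclusion.
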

\begin{proof}
	Let $\epsilon>0$ be fixed.
	Let $ T =\big (t_{ij}\big)_{n\times n} $ be the matrix representation of $ T $ with respect to the standard ordered basis of $ \ell_{\infty}^{n}.$
	Since $T$ is an extreme contraction, using Lemma \ref{lemma-infinity}, we say that each row of $T$ contains exactly one non-zero element from $S=\{1,-1\}$ and remaining elements in the same row are zero. Now, since $T$ is not an isometry, it is easy to see that one of the columns of $T$ contains at least two non-zero elements from $S=\{1, -1\}.$ Without loss of generality, we may and do assume that $t_{11}$ and $t_{21}$ are non-zero. 
	We define 
	$ A_\epsilon =\big (a_{ij}\big)_{n\times n} ,$ where  
	\[ a_{ij} =  \begin{cases}
	t_{ij},& (i,j) \neq (1,1)\\
	t_{11}-\frac{\epsilon}{2},&(i,j) = (1,1) ~\& ~t_{11}=1\\
	t_{11}+\frac{\epsilon}{2},&(i,j) = (1,1) ~\& ~t_{11}=-1.
	\end{cases} \] 
	Clearly, $A_\epsilon\neq T.$
	By a simple calculation, it is easy to see that $\|A_{\epsilon}\|=1,$  $M_T=M_{A_{\epsilon}}$ and $\|T-A_\epsilon\|=\frac{\epsilon}{2}<\epsilon.$ 	Since $\mathbb{X}$ is finite-dimensional, it is reflexive, Kadets-Klee. Thus, by Theorem \ref{th-appnorm}, there exists $\delta(\epsilon)>0$ such that $ M_T(\delta(\epsilon))\subset \bigcup_{z\in M_T} B(z,\epsilon).$ Now, choose $x\in S_{\mathbb{X}}$ such that $\|Tx\|>1-\delta(\epsilon).$ Then there exists $z\in M_T$ such that $x\in B(z,\epsilon),$ i.e., $z\in M_{A_\epsilon}$ and $\|z-x\|<\epsilon.$ Thus, $A_\epsilon$ is a non-trivial uniform $\epsilon-$BPB approximation of $T.$ This completes the proof of the theorem.
\end{proof}

As an application of the above theorem, we characterize the isometries in $\mathbb{L}(\ell_{\infty}^n, \ell_{\infty}^n)$ in terms of uniform $\epsilon-$BPB approximations.

\begin{cor}\label{cor-infinity}
	Let $\mathbb{X}=\ell_{\infty}^n$ and $T\in S_{\mathbb{L}(\mathbb{X},\mathbb{X})}.$  Then $T$ is an isometry if and only if there exists $\epsilon_0>0$ such that for each $0<\epsilon<\epsilon_0,$ $T$ is the only uniform $\epsilon-$BPB approximation of $T.$ Moreover, the pair $(\ell_{\infty}^n,\ell_{\infty}^n),~(n\in \mathbb{N})$ has non-trivial norm preserving uniform BPB approximation property. 
\end{cor}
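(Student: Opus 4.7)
The plan is to observe that everything needed has essentially been established in the preceding theorems, so the corollary is a matter of assembling the pieces correctly. Since $\ell_\infty^n$ is a finite-dimensional polyhedral Banach space, Corollary \ref{cor-isometry} applies directly: if $T \in S_{\mathbb{L}(\mathbb{X},\mathbb{X})}$ is an isometry, then there is some $\epsilon_0 > 0$ such that $T$ is the only uniform $\epsilon$-BPB approximation of itself for all $0 < \epsilon < \epsilon_0$. This handles the forward implication of the biconditional with essentially no further work.

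For the converse, I would prove the contrapositive: if $T \in S_{\mathbb{L}(\mathbb{X},\mathbb{X})}$ is not an isometry, then for every $\epsilon > 0$, $T$ admits a non-trivial uniform $\epsilon$-BPB approximation. The natural split is into two cases. If $T$ is not an extreme contraction, then since $\mathbb{X} = \ell_\infty^n$ is finite-dimensional (hence reflexive, Kadets-Klee, with $T$ automatically compact), Theorem \ref{th-notextreme} produces a non-trivial uniform $\epsilon$-BPB approximation $A_\epsilon$ with $M_{A_\epsilon} = M_T$. If instead $T$ is an extreme contraction but not an isometry, then Theorem \ref{infinity-contraction} provides exactly such an approximation $A_\epsilon$, again with the equality $M_{A_\epsilon} = M_T$. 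In either sub-case, a non-trivial uniform $\epsilon$-BPB approximation exists for every $\epsilon > 0$, which is the negation of the conclusion and completes the biconditional.

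For the "moreover" clause, I note that in both sub-cases of the preceding paragraph, the constructed approximation $A_\epsilon$ satisfies $M_{A_\epsilon} = M_T$. This is exactly Definition \ref{def-pair}, so the pair $(\ell_\infty^n, \ell_\infty^n)$ has the non-trivial norm preserving uniform BPB approximation property. The only subtle point is that Definition \ref{def-pair} requires this for every $T \in S_{\mathbb{L}(\mathbb{X},\mathbb{X})} \setminus \mathrm{Iso}(\mathbb{X},\mathbb{X})$, and this is exactly what the case analysis covers.

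There is no real obstacle here; the result is essentially a packaging corollary that bundles Corollary \ref{cor-isometry}, Theorem \ref{th-notextreme}, and Theorem \ref{infinity-contraction} into a clean characterization. The only thing one needs to be careful about is ensuring the case split (extreme contraction vs.\ not) is exhaustive and that in each case the approximation preserves the norm attainment set, which is precisely what the earlier theorems deliver.
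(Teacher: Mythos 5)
Your proposal is correct and follows exactly the same route as the paper's own proof: the isometry direction via Corollary \ref{cor-isometry}, the converse by splitting into the extreme-contraction case (Theorem \ref{infinity-contraction}) and the non-extreme-contraction case (Theorem \ref{th-notextreme}), and the ``moreover'' clause from the fact that both constructions preserve $M_T$, as required by Definition \ref{def-pair}. No gaps.
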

\begin{proof}
	The necessary part of the corollary follows from Corollary \ref{cor-isometry}. For the sufficient part, suppose that $T$ is not an isometry. If $T$ is an extreme contraction, then the result follows from Theorem \ref{infinity-contraction}. If $T$ is not an extreme contraction, then the result follows from Theorem \ref{th-notextreme}.\\
	 The rest of the proof now follows directly from Definition \ref{def-pair}.
\end{proof}

We now obtain some results on uniform $\epsilon-$BPB approximations in $\mathbb{L}(\ell_1^n, \ell_1^n),$ in the same spirit as above. We refer to  \cite[Lemma 2.8]{Sh} for the proof of the following lemma, which we require for our purpose.

\begin{lemma}\label{one}
	Let $\mathbb{X}=\ell_{1}^n$ and $T\in S_{\mathbb{L}(\mathbb{X},\mathbb{X})}.$ If $T$ is an extreme contraction, then each column of corresponding matrix with respect to standard ordered basis contains exactly one non-zero element from $S=\{1, -1\}$ and remaining elements in the same column are all zero.
\end{lemma}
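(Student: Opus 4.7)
The plan is to reduce the statement about $\ell_1^n$ to the already-available statement about $\ell_\infty^n$ (namely Lemma \ref{lemma-infinity}) by passing to adjoints. Since $\dim \mathbb{X} < \infty$, an operator $T \in \mathbb{L}(\mathbb{X},\mathbb{X})$ is an extreme contraction if and only if its adjoint $T^* \in \mathbb{L}(\mathbb{X}^*, \mathbb{X}^*)$ is an extreme contraction; this is the duality principle that the authors invoke immediately before Lemma \ref{lemma-infinity}. Combined with the isometric identification $(\ell_1^n)^* = \ell_\infty^n$ given by the standard dual pairing, this converts a hypothesis about an extreme contraction on $\ell_1^n$ into a hypothesis about an extreme contraction on $\ell_\infty^n$, where the structural conclusion is known.

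First I would fix $T \in S_{\mathbb{L}(\ell_1^n, \ell_1^n)}$ which is extreme and form its adjoint $T^* \in S_{\mathbb{L}(\ell_\infty^n, \ell_\infty^n)}$; by the duality noted above, $T^*$ is extreme. Applying Lemma \ref{lemma-infinity} to $T^*$, with respect to the standard ordered basis of $\ell_\infty^n$ (which is precisely the basis dual to the standard basis of $\ell_1^n$), yields that each row of the matrix representation of $T^*$ contains exactly one nonzero entry, lying in $\{1,-1\}$, with all remaining entries in that row equal to zero.

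Finally I would translate this back to $T$ by using the standard fact that the matrix of $T^*$ with respect to the dual basis is the transpose of the matrix of $T$ with respect to the original basis. Under transposition the rows of $[T^*]$ become the columns of $[T]$, so the row-structure conclusion above is exactly the column-structure conclusion stated in the lemma: each column of the matrix of $T$ with respect to the standard ordered basis of $\ell_1^n$ contains exactly one nonzero entry from $\{1,-1\}$ and the remaining entries in that column vanish.

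The argument is essentially bookkeeping; the only point requiring care is to ensure the dual-basis identification and the transpose relationship are set up correctly, since the substantive geometric content has already been absorbed into Lemma \ref{lemma-infinity}. There is no analytic obstacle. (In fact this duality route is precisely why the authors cite \cite[Lemma 2.8]{Sh} for both $\ell_\infty^n$ and $\ell_1^n$: once either is established, the other follows by adjoining.)
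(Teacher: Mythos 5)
Your argument is mathematically correct: in finite dimensions $T$ is an extreme contraction if and only if $T^*$ is, $(\ell_1^n)^*$ identifies isometrically with $\ell_\infty^n$ under the standard pairing, the matrix of $T^*$ in the dual basis is the transpose of the matrix of $T$, and transposition exchanges rows and columns, so Lemma \ref{lemma-infinity} applied to $T^*$ yields exactly the stated column condition for $T$. However, the route is genuinely different from the paper's, and in fact runs the paper's own logic backwards. The paper proves Lemma \ref{one} by direct citation to \cite[Lemma 2.8]{Sh}, which is a statement about the $\ell_1$ setting; it is Lemma \ref{lemma-infinity} (the $\ell_\infty^n$ version) that the paper obtains \emph{from} that $\ell_1^n$ result via the adjoint/transpose duality you describe. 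So your parenthetical closing remark has the direction of the adjoining reversed: the $\ell_1^n$ statement is the primitive one here, and the $\ell_\infty^n$ statement is the derived one. Your proof is still valid as written, because Lemma \ref{lemma-infinity} is stated and established earlier in the paper and may legitimately be used; but if one unwinds the dependencies, your chain is Sharir $\Rightarrow$ Lemma \ref{lemma-infinity} $\Rightarrow$ Lemma \ref{one}, i.e.\ a round trip through duality back to the statement Sharir essentially provides directly. What your approach buys is a self-contained derivation inside the paper (modulo Lemma \ref{lemma-infinity} and the duality principle, both already on record); what the paper's approach buys is brevity and an accurate attribution of where the substantive content lives.
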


Our next two results are analogous to Theorem \ref{infinity-contraction} and Corollary \ref{cor-infinity}, respectively.

\begin{theorem}\label{th-lone}
	Suppose $\mathbb{X}=\ell_{1}^n$ and $T\in S_{\mathbb{L}(\mathbb{X},\mathbb{X})}$ is an extreme contraction which is not an isometry. Then $T$ admits a non-trivial uniform $\epsilon-$BPB approximation $A_{\epsilon}$ with $M_T=M_{A_{\epsilon}},$  for each $\epsilon >0.$  
\end{theorem}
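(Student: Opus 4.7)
The plan is to mirror the strategy of Theorem \ref{infinity-contraction} by performing an explicit surgery on the matrix of $T$. By Lemma \ref{one}, the matrix $(t_{ij})$ of $T$ with respect to the standard basis has exactly one nonzero entry, lying in $\{-1,1\}$, in each column. Let $\pi:\{1,\ldots,n\}\to\{1,\ldots,n\}$ send column $j$ to the row index of its nonzero entry. Then $T$ is an isometry iff $\pi$ is a bijection iff $T$ is a signed permutation matrix; since $T$ is not an isometry and $\sum_{i}|\pi^{-1}(i)|=n$, there must exist a row $i_0$ with $\pi^{-1}(i_0)=\emptyset$ (a zero row) and, after relabeling, a row (which we take to be row $1$) with $\{1,2\}\subseteq\pi^{-1}(1)$, so that $t_{1,1}$ and $t_{1,2}$ are both in $\{-1,1\}$.

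I would then define $A_\epsilon=(a_{ij})$ by splitting the $(1,2)$-entry across the empty row: set $a_{1,2}=t_{1,2}(1-\epsilon/4)$, $a_{i_0,2}=t_{1,2}(\epsilon/4)$, and $a_{ij}=t_{ij}$ otherwise. Each column of $A_\epsilon$ still has $\ell_1$-norm $1$, so $\|A_\epsilon\|=\max_{j}\|A_\epsilon e_j\|_1=1$; clearly $A_\epsilon\ne T$; and $T-A_\epsilon$ has only column $2$ nonzero, with absolute column sum $\epsilon/4+\epsilon/4=\epsilon/2$, so $\|T-A_\epsilon\|=\epsilon/2<\epsilon$.

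The main technical step, which is also where I expect the only real obstacle to lie, is verifying $M_T=M_{A_\epsilon}$. For any $B\in\{T,A_\epsilon\}$ and any $x\in S_{\ell_1^n}$, the standard estimate
\[
\|Bx\|_1\le\sum_{i}\sum_{j}|b_{ij}||x_j|=\sum_{j}|x_j|\|Be_j\|_1=1
\]
is an equality exactly when, for each row $i$, the summands $b_{ij}x_j$ over $j$ in the nonzero support of row $i$ all share a common sign. For rows $i\ne 1,i_0$, the supports and signs of $A_\epsilon$ agree with those of $T$; row $i_0$ of $A_\epsilon$ contributes only the single term $a_{i_0,2}x_2$, which trivially satisfies the sign condition; and row $1$ of $A_\epsilon$ has the same nonzero support as row $1$ of $T$ with signs unchanged, since the scaling factor $1-\epsilon/4>0$ does not affect the sign of the modified entry. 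Hence the equality conditions for $T$ and $A_\epsilon$ coincide, yielding $M_T=M_{A_\epsilon}$.

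To conclude, I would invoke Theorem \ref{th-appnorm}: since $\ell_1^n$ is finite-dimensional and hence reflexive and Kadets-Klee, and $T\in\mathbb{K}(\ell_1^n,\ell_1^n)$, there exists $\delta(\epsilon)>0$ with $M_T(\delta(\epsilon))\subseteq\bigcup_{z\in M_T}B(z,\epsilon)$. Thus for any $x\in S_{\ell_1^n}$ with $\|Tx\|_1>1-\delta(\epsilon)$, one obtains $z\in M_T=M_{A_\epsilon}$ satisfying $\|z-x\|_1<\epsilon$, and combined with $\|T-A_\epsilon\|<\epsilon$, this shows that $A_\epsilon$ is a non-trivial uniform $\epsilon$-BPB approximation of $T$ with $M_T=M_{A_\epsilon}$, completing the proof.
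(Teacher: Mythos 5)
Your proposal is correct and follows essentially the same route as the paper: the same surgery of splitting, across the zero row, the mass of one column whose nonzero entry sits in a row hit by at least two columns, followed by the same appeal to Theorem \ref{th-appnorm}; your explicit equality-case analysis of the row-wise triangle inequality is in fact a more detailed justification of the paper's ``by simple calculation'' claim that $M_T=M_{A_\epsilon}$. (Both your construction and the paper's tacitly assume $\epsilon$ small enough that $1-\epsilon/4>0$, which is harmless since a uniform $\epsilon'$-BPB approximation for $\epsilon'<\epsilon$ is also one for $\epsilon$.)
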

\begin{proof}
	Let $\epsilon>0$ be fixed.
	Let $ T =\big (t_{ij}\big)_{n\times n} $ be the matrix representation of $ T $ with respect to the standard ordered basis of $ \ell_{1}^{n}.$ Since $T$ is an extreme contraction, using Lemma \ref{one}, we say that each column of $T$ contains exactly one non-zero element from $S=\{1,-1\}$ and the remaining elements in the same column are all zero. Now, since $T$ is not an isometry, it is easy to see that one of the rows of $T$ contains at least two non-zero elements from $S=\{1, -1\}.$ Without loss of generality, we may and do assume that $t_{11}$  and $t_{12}$ are non-zero. Using Lemma \ref{one} again, it is easy to observe that at least one row of $T$ is zero. Without loss of generality, we assume that  $t_{2j}=0$ for all $1\leq j\leq n.$\\
	We define 
	$ A_\epsilon =\big (a_{ij}\big)_{n\times n} ,$ where  
	\[ a_{ij} =  \begin{cases}
	t_{ij},& (i,j) \neq (1,1) , (2,1)\\
	t_{11}-\frac{\epsilon}{4},& (i,j) = (1,1) ~\& ~t_{11}=1\\
	t_{11}+\frac{\epsilon}{4},& (i,j) = (1,1) ~\& ~t_{11}=-1\\
	\frac{\epsilon}{4}, & (i,j) = (2,1).
	\end{cases} \] 
	Clearly, $A_\epsilon\neq T$ and $\|A_\epsilon-T\|<\epsilon.$ Now, by simple calculation, it is easy to see that $\|A_{\epsilon}\|=1$ and $M_T=M_{A_{\epsilon}}.$  Rest of the proof follows using similar arguments as in Theorem \ref{infinity-contraction}.
\end{proof}

\begin{cor}\label{cor-l1}
	Let $\mathbb{X}=\ell_{1}^n$ and $T\in S_{\mathbb{L}(\mathbb{X},\mathbb{X})}.$ Then $T$ is an isometry if and only if there exists $\epsilon_0>0$ such that for each $0<\epsilon<\epsilon_0,$ $T$ is the only uniform $\epsilon-$BPB approximation of $T.$ Moreover, the pair $(\ell_{1}^n,\ell_{1}^n),~(n\in \mathbb{N})$ has non-trivial norm preserving uniform BPB approximation property.
\end{cor}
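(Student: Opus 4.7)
The plan is to mimic exactly the proof of Corollary \ref{cor-infinity}, since the result for $\ell_1^n$ is the direct analogue of the one for $\ell_\infty^n$ and all the relevant building blocks are already in place. First, for the necessary direction, I would invoke Corollary \ref{cor-isometry}: since $\ell_1^n$ is a finite-dimensional polyhedral Banach space, every isometry $T \in \mathbb{L}(\ell_1^n, \ell_1^n)$ admits some $\epsilon_0>0$ such that for $0<\epsilon<\epsilon_0$, $T$ is the only uniform $\epsilon$-BPB approximation of itself.

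For the sufficient direction, I would argue by contrapositive: assume $T$ is not an isometry and exhibit, for every $\epsilon>0$, a non-trivial uniform $\epsilon$-BPB approximation of $T$. Here the argument splits into two cases. If $T$ is an extreme contraction (but not an isometry), then Theorem \ref{th-lone} directly supplies a non-trivial $A_\epsilon$ with $M_{A_\epsilon}=M_T$. If $T$ is not an extreme contraction, then since $\ell_1^n$ is reflexive and Kadets–Klee (being finite-dimensional), Theorem \ref{th-notextreme} again produces a non-trivial uniform $\epsilon$-BPB approximation $A_\epsilon$ with $M_{A_\epsilon}=M_T$. In either case, $T$ fails to be the unique uniform $\epsilon$-BPB approximation of itself for arbitrarily small $\epsilon$, contradicting the hypothesis. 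This proves the equivalence.

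The ``moreover'' statement is essentially automatic: given any $T \in S_{\mathbb{L}(\ell_1^n,\ell_1^n)} \setminus \mathrm{Iso}(\ell_1^n,\ell_1^n)$ and any $\epsilon>0$, the very same two cases (extreme versus non-extreme contraction) together with Theorem \ref{th-lone} and Theorem \ref{th-notextreme} yield a non-trivial uniform $\epsilon$-BPB approximation $A_\epsilon$ of $T$ satisfying $M_{A_\epsilon}=M_T$. By Definition \ref{def-pair}, this is precisely the non-trivial norm preserving uniform BPB approximation property for the pair $(\ell_1^n,\ell_1^n)$.

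There is no real obstacle here since all the heavy lifting is done by the earlier results; the one point that requires a tiny bit of care is noting that the dichotomy ``extreme or not extreme contraction'' exhausts all norm-one operators, and that Theorem \ref{th-notextreme} applies without the hypothesis of $T$ being compact in finite dimensions. Both are immediate, so the proof is essentially a short case analysis referencing the already-established results.
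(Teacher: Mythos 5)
Your proposal is correct and follows essentially the same route as the paper: the necessary direction via Corollary \ref{cor-isometry}, the sufficient direction via the dichotomy between extreme contractions (Theorem \ref{th-lone}) and non-extreme contractions (Theorem \ref{th-notextreme}), and the ``moreover'' part via Definition \ref{def-pair}. Your added remarks about the dichotomy being exhaustive and compactness being automatic in finite dimensions are accurate.
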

\begin{proof}
	The necessary part of the corollary follows from Corollary \ref{cor-isometry}. For the sufficient part, suppose that $T$ is not an isometry. If $T$ is an extreme contraction, then the result follows from Theorem \ref{th-lone}. If $T$ is not an extreme contraction, then the result follows from Theorem \ref{th-notextreme}.\\
	 The rest of the proof now follows directly from Definition \ref{def-pair}.
\end{proof}

For $n \geq 4,$ extreme contractions in $\mathbb{L}(\ell_{\infty}^n, \ell_1^n)$ are yet to be completely identified. We note that the case $n=2$ has already been covered in Theorem \ref{infinity-contraction}, since $\ell_{\infty}^2$ is isometrically isomorphic to $\ell_1^2.$ We would like to study the preservation of operator norm attainment for extreme contractions in $\mathbb{L}(\ell_{\infty}^3, \ell_1^3)$ under uniform $\epsilon-$BPB approximations. To this end, we begin with a useful remark from \cite{BM}.

\begin{remark}
	We say that two $n\times n$ matrices $A$ and $B$ are equivalent if there exist diagonal matrices $D_1,D_2$ with unimodular diagonal entries and permutation matrices $P_1,P_2$ such that $B=D_1P_1AP_2D_2.$ 
\end{remark}

Now, the promised theorem in $\mathbb{L}(\ell_{\infty}^3, \ell_1^3)$:

\begin{theorem}\label{th-infinity1}
	Suppose $\mathbb{X}=\ell_{\infty}^3$ and $\mathbb{Y}=\ell_{1}^3.$ Let $T\in S_{\mathbb{L}(\mathbb{X},\mathbb{Y})}$ be an extreme contraction. Then $T$ admits a non-trivial uniform $\epsilon-$BPB approximation $A_{\epsilon}$ with $M_T=M_{A_{\epsilon}},$  for each $\epsilon>0.$
\end{theorem}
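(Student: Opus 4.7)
The plan is to combine the classification of extreme contractions in $\mathbb{L}(\ell_\infty^3, \ell_1^3)$ up to the equivalence of the Remark from \cite{BM} with an explicit entry-perturbation construction, in the same spirit as Theorems \ref{infinity-contraction} and \ref{th-lone}.

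First, I would observe that the equivalence $B = D_1 P_1 A P_2 D_2$ is nothing but pre/post-composition of $A$ with isometries of $\ell_\infty^3$ and $\ell_1^3$ respectively. Such compositions preserve the operator norm, the property of being an extreme contraction, the distance $\|T - A\|,$ and they map norm attainment sets bijectively to norm attainment sets. Consequently, if $A_\epsilon$ is a non-trivial uniform $\epsilon$-BPB approximation of $T$ with $M_T = M_{A_\epsilon},$ then the corresponding perturbation of any equivalent representative gives a non-trivial uniform $\epsilon$-BPB approximation with the analogous property. It therefore suffices to verify the theorem for a single representative from each equivalence class.

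Next, I would appeal to the classification in \cite{BM}, which provides a finite, explicit list of equivalence classes of extreme contractions in $\mathbb{L}(\ell_\infty^3,\ell_1^3).$ For each representative $T = (t_{ij}),$ I would construct $A_\epsilon = (a_{ij})$ by modifying a single carefully chosen entry $t_{i_0 j_0}$ by a quantity of order $\epsilon/2,$ so that $\|A_\epsilon\| = 1,$ $M_T = M_{A_\epsilon},$ and $\|T - A_\epsilon\| < \epsilon$ with $A_\epsilon \neq T.$ For the rank-one representatives this construction is essentially the one used in Theorem \ref{infinity-contraction}; for the higher-rank representatives, the argument hinges on selecting an entry whose value does not saturate the identity $\|Tx\|_1 = \sum_i |\sum_j t_{ij} x_j|$ at any $x \in M_T \cap \{-1,+1\}^3,$ so that a small perturbation of it can neither destroy a norm-attaining vertex nor create a new one.

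Finally, since $\ell_\infty^3$ is finite-dimensional, hence reflexive and Kadets-Klee, Theorem \ref{th-appnorm} furnishes, for the fixed $\epsilon > 0,$ a constant $\delta(\epsilon) > 0$ with $M_T(\delta(\epsilon)) \subseteq \bigcup_{z \in M_T} B(z, \epsilon).$ Combined with $M_{A_\epsilon} = M_T,$ this immediately establishes the uniform $\epsilon$-BPB approximation condition, closing the proof as at the end of Theorem \ref{infinity-contraction}. The principal obstacle is the middle step: for the higher-rank equivalence classes one must perform a delicate case analysis, computing for each representative the exact set of vertices of the cube $B_{\ell_\infty^3}$ at which the norm is attained, identifying a "slack" entry, and verifying that the proposed perturbation preserves both $\|T\| = 1$ and the full set of norm-attaining vertices. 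The care required per class is comparable to that of Theorems \ref{infinity-contraction} and \ref{th-lone}, but must now be repeated for each of the finitely many equivalence classes produced by \cite{BM}.
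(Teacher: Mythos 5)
Your overall architecture (reduce to one representative per equivalence class via signed-permutation isometries, perturb that representative, then close with Theorem \ref{th-appnorm}) matches the paper, which handles the $18$ rank-one classes via Theorem \ref{th-rank1} and the $72$ remaining classes by an explicit perturbation of $T=\frac12\begin{pmatrix}1&1&0\\1&-1&0\\0&0&0\end{pmatrix}$. However, your central construction --- ``modifying a single carefully chosen entry $t_{i_0j_0}$ by a quantity of order $\epsilon/2$'' --- cannot work for operators into $\ell_1^3$, and this is a genuine gap rather than a detail to be checked class by class. The reason is that $\|A x\|_1=\sum_i|\sum_j a_{ij}x_j|$ sums contributions over \emph{all} rows, so there is no slack entry: for the rank-one representative $T=e_{11}$, increasing any zero entry to $c\neq 0$ forces $\|A\|\geq 1+|c|$ (choose the signs of $x$ to align the new term with the old), while shrinking $t_{11}$ forces $\|A\|<1$; for the rank-two representative one has $\|Tx\|_1=|{\textstyle\frac{x_1+x_2}{2}}|+|{\textstyle\frac{x_1-x_2}{2}}|=\max(|x_1|,|x_2|)$, so \emph{every} vertex of the cube lies in $M_T$ and saturates the identity, and a short check of all nine entries shows each single-entry change either pushes $\|A\|$ above $1$ at some vertex or drops it below $1$ at every vertex. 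The criterion you propose (an entry not saturating the norm identity at any norm-attaining vertex) is therefore vacuous here.

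The fix requires a \emph{coordinated} multi-entry perturbation that preserves $\|A_\epsilon x\|_1$ identically as a function of $x$. For the rank-one class the paper does this through Theorem \ref{th-rank1}: writing $Tx=f(x)w$, it moves $w$ within $S_{\ell_1^3}$, which redistributes mass inside a column (e.g.\ $t_{11}=1\mapsto 1-\frac{\epsilon}{8}$ together with $t_{21}=0\mapsto\frac{\epsilon}{8}$), keeping $\|A_\epsilon x\|_1=|f(x)|$. For the rank-two class the paper modifies four entries,
$A_\epsilon=\begin{pmatrix}\frac12-\frac{\epsilon}{8}&\frac12-\frac{\epsilon}{8}&0\\ \frac12&-\frac12&0\\ \frac{\epsilon}{8}&\frac{\epsilon}{8}&0\end{pmatrix},$
splitting the first row into two parallel rows so that the first and third rows together contribute $(\frac12-\frac{\epsilon}{8})|x_1+x_2|+\frac{\epsilon}{8}|x_1+x_2|=\frac12|x_1+x_2|$, whence $\|A_\epsilon x\|_1=\|Tx\|_1$ for all $x$ and both $\|A_\epsilon\|=1$ and $M_{A_\epsilon}=M_T$ follow at once. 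Your reduction step and your concluding appeal to Theorem \ref{th-appnorm} are correct and coincide with the paper; it is only the perturbation mechanism that must be replaced.
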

\begin{proof}
	Let $\epsilon>0$ be fixed. From \cite{L} it follows that $\mathbb{L}(\mathbb{X},\mathbb{Y})$ has $90$ extreme contractions. A detailed calculation reveals that (see also \cite[Th. 3]{BM}) out of these $90$ extreme contractions, $18$ extreme contractions are equivalent to $\begin{pmatrix}
	1 &  0 \\ 
	0 &  0 
	\end{pmatrix}\oplus 0$ and $72$ extreme contractions are equivalent to $\begin{pmatrix}
	\frac{1}{2} &  \frac{1}{2} \\ 
	\frac{1}{2} &  -\frac{1}{2}
	\end{pmatrix}\oplus 0.$ Therefore, upto equivalence, 
	either of the following two cases holds :\\
	$\textbf{Case 1.}$  $T$ is of the form $\begin{pmatrix}
	1 &  0 & 0\\ 
	0 &  0 & 0\\
	0 & 0 & 0
	\end{pmatrix},$\\
	$\textbf{Case 2.}$  $T$ is of the form 	$\begin{pmatrix}
	\frac{1}{2} &  \frac{1}{2} & 0\\ 
	\frac{1}{2} &  -\frac{1}{2} & 0\\
	0 & 0 & 0
	\end{pmatrix}.$
	
	We note that in $\textbf{Case 1},$ $T$ is a rank one operator. Therefore, using Theorem \ref{th-rank1}, we get non-trivial uniform $\epsilon-$BPB approximation $A_\epsilon$ such that $M_T=M_{A_\epsilon}.$\\ For $\textbf{Case 2},$ there are $72$ such extreme contractions. We prove the theorem for one such operator, namely, 	$T=\begin{pmatrix}
	\frac{1}{2} &  \frac{1}{2} & 0\\ 
	\frac{1}{2} &  -\frac{1}{2} & 0\\
	0 & 0 & 0
	\end{pmatrix}.$ We can prove the same for remaining extreme contractions by same technique.
	We define 
		$A_\epsilon=\begin{pmatrix}
	\frac{1}{2}-\frac{\epsilon}{8} & ~ \frac{1}{2}-\frac{\epsilon}{8}~ & 0\\ 
	\frac{1}{2} & ~ -\frac{1}{2}~ & 0\\
	\frac{\epsilon}{8}~ & \frac{\epsilon}{8}~ & 0
	\end{pmatrix}.$
	Clearly, $T\neq A_{\epsilon},$ and it is easy to verify  that $\|A_{\epsilon}\|=1,$  $M_T=M_{A_{\epsilon}}$ and $\|T-A_\epsilon\|=\frac{\epsilon}{2}<\epsilon.$ 
	Rest of the proof follows using similar arguments as in Theorem \ref{infinity-contraction}. 
\end{proof}

The proof of the following corollary is now obvious in light of Theorem \ref{th-notextreme} and Theorem \ref{th-infinity1}, and is therefore omitted.

\begin{cor}\label{cor-linfinityl1}
	The pair $(\ell_{\infty}^3,\ell_{1}^3),$ has non-trivial norm preserving uniform BPB approximation property. 
\end{cor}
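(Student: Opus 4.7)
The plan is to verify the conditions of Definition \ref{def-pair} by a straightforward dichotomy on $T \in S_{\mathbb{L}(\ell_{\infty}^3, \ell_1^3)}$, with the heavy lifting already done by Theorem \ref{th-notextreme} and Theorem \ref{th-infinity1}. The only preliminary observation I would record is that $\ell_{\infty}^3$ and $\ell_1^3$ are not isometrically isomorphic (their unit balls have different numbers of extreme points, namely $8$ and $6$), so $\mathrm{Iso}(\ell_\infty^3,\ell_1^3) = \emptyset$. Consequently, $S_{\mathbb{L}(\ell_\infty^3,\ell_1^3)} \setminus \mathrm{Iso}(\ell_\infty^3,\ell_1^3) = S_{\mathbb{L}(\ell_\infty^3,\ell_1^3)}$ and the hypothesis of Definition \ref{def-pair} simplifies to arbitrary $T$ in the unit sphere.

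Next, fix $T \in S_{\mathbb{L}(\ell_\infty^3, \ell_1^3)}$ and $\epsilon > 0$. Since both spaces are finite-dimensional, $\ell_\infty^3$ is reflexive and Kadets-Klee (trivially, as the norm topology and weak topology coincide), and $T$ is automatically compact. I would now split into two cases according to whether $T$ is an extreme contraction.

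If $T$ is not an extreme contraction, I would invoke Theorem \ref{th-notextreme} applied to $\mathbb{X} = \ell_\infty^3$ and $\mathbb{Y} = \ell_1^3$, which produces a non-trivial uniform $\epsilon$-BPB approximation $A_\epsilon$ with $M_{A_\epsilon} = M_T$. If instead $T$ is an extreme contraction, I would apply Theorem \ref{th-infinity1} directly (its hypothesis is exactly that $T$ is an extreme contraction in $S_{\mathbb{L}(\ell_\infty^3, \ell_1^3)}$) to obtain the desired $A_\epsilon$ with $M_T = M_{A_\epsilon}$. In either branch, the conclusion of Definition \ref{def-pair} is met, so the pair $(\ell_\infty^3, \ell_1^3)$ has the non-trivial norm preserving uniform BPB approximation property.

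There is no genuine obstacle here; the corollary is purely a packaging of Theorem \ref{th-notextreme} and Theorem \ref{th-infinity1}, which is precisely why the authors omit the proof. The only subtlety worth double-checking is the emptiness of $\mathrm{Iso}(\ell_\infty^3,\ell_1^3)$, so that Definition \ref{def-pair} does not vacuously impose a restriction we have not addressed.
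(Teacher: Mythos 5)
Your proposal is correct and follows exactly the route the paper intends: the authors explicitly state that the corollary is immediate from Theorem \ref{th-notextreme} and Theorem \ref{th-infinity1} and omit the proof, and your case split on whether $T$ is an extreme contraction is precisely that argument. Your additional observation that $\mathrm{Iso}(\ell_\infty^3,\ell_1^3)=\emptyset$ (via the extreme point counts $8$ versus $6$) is a worthwhile clarification of why every $T\in S_{\mathbb{L}(\ell_\infty^3,\ell_1^3)}$ must be covered, but it does not change the substance of the argument.
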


We are now interested in studying the case of $\ell_p^2$ spaces in the same spirit. The next result supplies us with a large class of two-dimensional Banach spaces with Property (P).

\begin{theorem}\label{th-propertyplp}
	The Banach space $\ell_p^2~(p>2,p\in \mathbb{N})$ satisfies Property $(P).$ 
\end{theorem}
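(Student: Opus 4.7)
The plan is to argue by contradiction, using finite-dimensional compactness together with a first-order perturbation analysis afforded by the smoothness of $\ell_p^2$.

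Suppose Property $(P)$ fails for $\mathbb{X}=\ell_p^2$. Then we can find a sequence $A_n\in S_{\mathbb{L}(\mathbb{X},\mathbb{X})}\setminus\text{Iso}(\mathbb{X},\mathbb{X})$ such that $M_{A_n}$ is $\tfrac{1}{n}$-dense in $S_\mathbb{X}$. Since $\dim\mathbb{X}<\infty$, $S_{\mathbb{L}(\mathbb{X},\mathbb{X})}$ is compact; passing to a subsequence, $A_n\to U$. For any $x_0\in S_\mathbb{X}$, choosing $y_n\in M_{A_n}$ with $y_n\to x_0$ and taking the limit in $\|A_n y_n\|=1$ gives $\|Ux_0\|=1$, so $U\in\text{Iso}(\mathbb{X},\mathbb{X})$. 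Write $A_n=U+\epsilon_n E_n$ with $\epsilon_n=\|A_n-U\|\to 0^+$ and $\|E_n\|=1$, and extract a further subsequence with $E_n\to E$, $\|E\|=1$.

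Smoothness of $\ell_p^2$ ($p>1$) makes $J(y)=\{f_y\}$ a singleton for each $y\in S_\mathbb{X}$ and furnishes the uniform first-order expansion
\[\|A_n x\|=1+\epsilon_n f_{Ux}(E_n x)+O(\epsilon_n^2),\qquad x\in S_\mathbb{X}.\]
The bound $\|A_n x\|\leq 1$ gives $f_{Ux}(E_n x)\leq O(\epsilon_n)$ while $x\in M_{A_n}$ forces $f_{Ux}(E_n x)=O(\epsilon_n)$. Given any $x_0\in S_\mathbb{X}$, density of $M_{A_n}$ yields $y_n\in M_{A_n}$ with $y_n\to x_0$, and the joint continuity of $(x,B)\mapsto f_{Ux}(Bx)$ combined with $E_n\to E$ and $\epsilon_n\to 0$ gives $f_{Ux_0}(E x_0)=0$. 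Hence $f_{Ux}(E x)=0$ for every $x\in S_\mathbb{X}$.

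Set $F=U^{-1}E$, so $\|F\|=1$. Since $U$ is an isometry, $f_{Ux}=f_x\circ U^{-1}$, and the identity becomes $f_x(Fx)=0$ for every $x\in S_\mathbb{X}$. Using the explicit formula $f_x(y)=\sum_{i=1}^{2}|x_i|^{p-1}\operatorname{sgn}(x_i)\,y_i$ on $\ell_p^2$, the choices $x=e_1$ and $x=e_2$ force $F_{11}=F_{22}=0$. Plugging $x=(t,(1-t^p)^{1/p})$ for $t\in(0,1)$ and dividing by $t(1-t^p)^{1/p}$ yields
\[F_{12}\,t^{p-2}+F_{21}\,(1-t^p)^{(p-2)/p}=0\quad\text{for all }t\in(0,1).\]
Since $p>2$, sending $t\to 0^+$ forces $F_{21}=0$ and $t\to 1^-$ forces $F_{12}=0$. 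Thus $F=0$, contradicting $\|F\|=1$.

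The hardest step is expected to be making the first-order expansion uniform in $x\in S_\mathbb{X}$ and converting the density of $M_{A_n}$ into the clean pointwise identity $f_{Ux}(E x)=0$ via joint continuity of the duality map $y\mapsto f_y$. The hypothesis $p>2$ is decisive in the final algebraic step: for $p=2$ the exponent $p-2$ vanishes, antisymmetric $F$ survive, and indeed $\ell_2^2$ possesses a continuous rotation group that is incompatible with Property $(P)$ as stated.
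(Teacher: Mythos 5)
Your argument is correct in substance but follows a genuinely different route from the paper. The paper's proof is short and quantitative: it invokes \cite[Th. 2.8]{S}, which bounds the cardinality of the norm attainment set of any norm-one non-isometry on $\ell_p^2$ by $2(8p-5)$, and then observes that finitely many balls of a sufficiently small uniform radius $r_0$ cannot cover the unit circle, so some point of $S_{\ell_p^2}$ is $r_0$-far from $M_A$. Your proof replaces this external cardinality bound with a soft compactness-and-perturbation argument: negating Property $(P)$ produces non-isometries $A_n\to U\in\text{Iso}(\mathbb{X},\mathbb{X})$ with increasingly dense norm attainment sets, and the first-order expansion of the norm along the perturbation direction forces the identity $f_x(Fx)=0$ on all of $S_{\mathbb{X}}$, which for $p\neq 2$ kills $F$. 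What the paper's approach buys is an explicit $r_0$ and brevity; what yours buys is self-containedness (no appeal to the $2(8p-5)$ bound, whose proof is where the hypothesis $p\in\mathbb{N}$ actually enters) and greater generality, since your final algebraic step works for every real $p\in(1,\infty)\setminus\{2\}$. The two obligations you flag --- uniformity in $x\in S_{\mathbb{X}}$ of the expansion (a uniform $o(\epsilon_n)$ suffices; you do not need the full $O(\epsilon_n^2)$) and continuity of the duality map --- are standard in this finite-dimensional uniformly smooth setting, so the gap there is only expository. One factual correction: your closing remark that the rotation group of $\ell_2^2$ is ``incompatible with Property $(P)$'' is wrong. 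Property $(P)$ quantifies only over \emph{non-isometries}, and Proposition \ref{th-propertyphil} of this paper shows that every Hilbert space satisfies Property $(P)$ (with $r_0=1$). What fails at $p=2$ is merely the last step of your particular argument (antisymmetric $F$ survive), not the conclusion.
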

\begin{proof}
 From  \cite[Th. 2.8]{S}, we get for any $A\in S_{\mathbb{L}(\ell_p^2,\ell_p^2)}\setminus \text{Iso}(\ell_p^2,\ell_p^2),$ $|M_A|\leq 2(8p-5).$ Since $S_{\ell_p^2}$ has a finite length, we can choose $r_0>0$ such that for arbitrary $x_i\in S_{\ell_p^2}, ~1\leq i\leq 2(8p-5),$ $S_{\ell_p^2}\setminus \cup_{1\leq i\leq 2(8p-5)}B(x_i,r_0)\neq \emptyset.$ Suppose that $M_A=\{\pm x_1,\pm x_2,\ldots,\pm x_k\}.$ Then $2k\leq 2(8p-5).$ Clearly, $S_{\ell_p^2}\setminus \cup_{1\leq i\leq k}B(\pm x_i,r_0)\neq \emptyset.$ Choose $z\in S_{\ell_p^2}\setminus \cup_{1\leq i\leq k}B(\pm x_i,r_0).$ Then we have $B(z,r_0)\cap M_A=\emptyset.$ Thus, the space $\ell_p^2$ satisfies Property $(P).$
\end{proof}

In the next proposition, we characterize the norm attainment set and the minimum norm attainment set of a particular operator that turns out to be useful in our study. 

 \begin{prop}\label{Clarkson} 
	Let $ \mathbb{X} = \ell_{p}^{2}(p>1)  $. Let $ T \in \mathbb{L}(\mathbb{X},\mathbb{X}) $ be such that $ T=
	\begin{pmatrix}
	1 & 1 \\ 
	\rule{0em}{3ex}1 &  -1
	\end{pmatrix}.$ Then the followings hold true:\\
	$(i)$  $M_T=\{\pm(1,0), \pm(1,0)\}$ if $1<p<2.$\\
	$(ii)$ $M_T=S_{\mathbb{X}}$ if $p=2.$\\
	$(iii)$ $ M_T=\{\pm(\frac{1}{2^{1/p}}, \frac{1}{2^{1/p}}),\pm(\frac{1}{2^{1/p}}, -\frac{1}{2^{1/p}})\}$ if $p>2.$  
\end{prop}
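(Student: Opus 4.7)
The plan is to reduce the computation of $\|Tx\|_p^p$ to a symmetric expression in $|x_1|, |x_2|$ and then invoke Clarkson's inequalities in their two regimes (which motivates the label of the proposition). Writing $Tx = (x_1+x_2, x_1-x_2)$ gives
$$\|Tx\|_p^p = |x_1+x_2|^p + |x_1-x_2|^p = (|x_1|+|x_2|)^p + \bigl||x_1|-|x_2|\bigr|^p,$$
where the second equality follows by a simple case analysis on the signs of $x_1, x_2$. So it suffices to maximize $(a+b)^p + (a-b)^p$ over $a \geq b \geq 0$ with $a^p + b^p = 1$ and then restore signs at the end.

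For $(ii)$, the parallelogram identity yields $(a+b)^2 + (a-b)^2 = 2(a^2+b^2) = 2$ for every admissible $(a,b)$, so $\|Tx\|_2 = \sqrt{2}$ throughout $S_{\mathbb{X}}$ and $M_T = S_{\mathbb{X}}$. For $(iii)$, Clarkson's inequality $(a+b)^p + (a-b)^p \leq 2^{p-1}(a^p+b^p)$ (for $p \geq 2$) gives $\|Tx\|_p^p \leq 2^{p-1}$ with equality iff $a = b$; coupled with $a^p + b^p = 1$ this forces $a = b = 2^{-1/p}$, so restoring signs yields $M_T = \{\pm(2^{-1/p}, 2^{-1/p}), \pm(2^{-1/p}, -2^{-1/p})\}$. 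For $(i)$, the complementary inequality for $1 < p < 2$, namely $(a+b)^p + (a-b)^p \leq 2(a^p+b^p) = 2$ with equality iff $ab = 0$, gives $\|T\| = 2^{1/p}$, attained precisely when one coordinate vanishes; restoring signs we read the statement as $M_T = \{\pm(1,0), \pm(0,1)\}$ (the duplication in the displayed set is evidently a typo).

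The main technical point is the small-$p$ inequality used in $(i)$; if one prefers a self-contained derivation, fix $a \geq b \geq 0$ and set
$$F(a,b) := (a+b)^p + (a-b)^p - 2a^p - 2b^p.$$
Then $\partial_a F = p\bigl[(a+b)^{p-1} + (a-b)^{p-1} - 2a^{p-1}\bigr]$. For $1 < p < 2$ the function $x \mapsto x^{p-1}$ is concave on $[0,\infty)$ (its second derivative $(p-1)(p-2)x^{p-3}$ is negative), and $a$ is the midpoint of $a+b$ and $a-b$, so the bracket is $\leq 0$; hence $F$ is nonincreasing in $a$ on $\{a \geq b\}$, and $F(a,b) \leq F(b,b) = (2^p-4)b^p \leq 0$, strictly negative whenever $b > 0$. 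This gives both the inequality and the equality condition $b = 0$, completing the argument.
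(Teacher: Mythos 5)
Your proposal is correct and follows essentially the same route as the paper, which simply observes that $T/\sqrt{2}$ is an isometry for $p=2$ and cites Clarkson's inequality for the remaining cases. You supply the details the paper leaves implicit --- the reduction to $(|x_1|+|x_2|)^p+\bigl||x_1|-|x_2|\bigr|^p$, the equality cases of the two Clarkson-type inequalities, and a self-contained concavity proof for $1<p<2$ --- and you correctly identify that the displayed set in $(i)$ should read $\{\pm(1,0),\pm(0,1)\}$.
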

\begin{proof}
	For $p=2,$ $ T$ is a scalar multiple of an isometry. Therefore, $(ii)$ holds true.\\
	Rest of the proof follows from the Clarkson inequality \cite{C}.
\end{proof}

In \cite[Th. 2.9]{Sa}, it was proved that given any isometry $T\in \mathbb{L}(\ell_p^2,\ell_p^2),$ where $p\in \mathbb{N}\setminus\{1,2\},$ $T$ is the only uniform $\epsilon-$BPB approximation of $T,$ for sufficiently small $\epsilon>0.$ Although the result stands, the proof of it, as given in \cite{Sa}, contains a subtle logical gap. In the next theorem, we would like to cover this gap and also to obtain a quantitative version of the said result. To this end, let us first consider the following definition.   

\begin{definition}
	Let $\|.\|_p$ denotes the usual $\ell_p$ norm on $\mathbb{R}^2.$ Let $C$ be a closed rectifiable curve in $\mathbb{R}^2.$ Given any two points $x,y\in C,$ let $s(x,y)$ denote the Euclidean arc length of the shorter arc between $x$ and $y,$ along the curve $C.$ Let $\epsilon>0$ be given. A real number $\delta(=\delta(\epsilon))>0$ is said to be an $\epsilon-$arc length constant of $C$ corresponding to $\|.\|_p,$ if given any two points $x,y\in C,~s(x,y)\geq \epsilon$ implies that $\|x-y\|_p\geq \delta.$
\end{definition}

We are now ready to present the promised quantitative version of \cite[Th. 2.9]{Sa}.

\begin{theorem}
	Let $\mathbb{X}=\ell_p^2,$ where $p\in \mathbb{N}\setminus \{1,2\}.$ Let $T\in \mathbb{L}(\mathbb{X},\mathbb{X})$ be an isometry. Then there exists $\epsilon_0>0$ such that for each $0<\epsilon<\epsilon_0,~ T$ is the only uniform $\epsilon-$BPB approximation of $T.$ Moreover, we can choose $\epsilon_0=\min\{2^{\frac{p-1}{p}},\delta_1\},$ where $\delta_1=\delta(\frac{L}{2(16p-9)})$ is an $\frac{L}{2(16p-9)}-$arc length constant of the curve $C=\{(x,y)\in \mathbb{R}^2:|x|^p+|y|^p=1\},$ and $L$ is the length of the curve $C$ with respect to $\|.\|_2.$  
\end{theorem}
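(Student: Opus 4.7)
The plan is to invoke the framework of Theorem~\ref{th-propertyP} with its two abstract constants made explicit. First I would note that for $p\in \mathbb{N}\setminus\{1,2\}$, the isometry group $\text{Iso}(\ell_p^2,\ell_p^2)$ consists of the eight signed permutation matrices (Banach--Lamperti), hence is finite, and by Theorem~\ref{th-propertyplp} the space $\ell_p^2$ satisfies Property $(P)$. So the qualitative existence of some $\epsilon_0>0$ follows directly from Theorem~\ref{th-propertyP}; the real task is to show that one may take $\epsilon_0=\min\{2^{(p-1)/p},\delta_1\}$.

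For the ``distance between distinct isometries'' ingredient of the proof of Theorem~\ref{th-propertyP}, using the group structure of $\text{Iso}(\ell_p^2,\ell_p^2)$ I only need to compute $\|I-V\|$ as $V$ ranges over the seven non-identity isometries. For sign changes and coordinate swaps a short calculation gives $\|I-V\|=2$, while the genuinely smaller value is achieved only by the $90^\circ$ rotations $V=\begin{pmatrix}0&\pm 1\\\mp 1&0\end{pmatrix}$: here $\|(I-V)(x,y)\|_p^p=|x-y|^p+|x+y|^p\le 2^{p-1}(|x|^p+|y|^p)$ by the Clarkson inequality for $p\ge 2$, with equality at $x=\pm y$. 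Hence $\inf\{\|T-V\|:V\in \text{Iso}(\ell_p^2,\ell_p^2),\,V\ne T\}=2^{(p-1)/p}$.

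For the Property~$(P)$ ingredient I would replace the bare existence of a radius $r_0$ by the following explicit covering argument. Suppose $0<\epsilon<\epsilon_0$ and that $A_\epsilon\in S_{\mathbb{L}(\ell_p^2,\ell_p^2)}$ is a uniform $\epsilon$-BPB approximation of $T$ with associated constant $\delta(\epsilon)$. Since $T$ is an isometry, $M_T=S_{\ell_p^2}$, and for every $x\in S_{\ell_p^2}$ one has $\|Tx\|=1>1-\delta(\epsilon)$, yielding $u_x\in M_{A_\epsilon}$ with $\|u_x-x\|_p<\epsilon$; equivalently, $S_{\ell_p^2}\subseteq \bigcup_{u\in M_{A_\epsilon}} B(u,\epsilon)$. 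If $A_\epsilon$ were not an isometry, then by \cite[Th.~2.8]{S} we would have $|M_{A_\epsilon}|\le 2(8p-5)=16p-10$. The contrapositive of the definition of $\delta_1$ turns $\|y-u\|_p<\epsilon<\delta_1$ into $s(y,u)<L/(2(16p-9))$, so each $B(u,\epsilon)\cap S_{\ell_p^2}$ lies inside an arc of Euclidean length strictly less than $L/(16p-9)$. Summing over $M_{A_\epsilon}$ covers an arc length strictly less than $(16p-10)\cdot L/(16p-9)<L$, contradicting the covering of $S_{\ell_p^2}$. Hence $A_\epsilon$ is an isometry, and then $\|T-A_\epsilon\|<\epsilon<2^{(p-1)/p}$ combined with the previous paragraph forces $A_\epsilon=T$.

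The main technical hurdles are the Clarkson-based computation pinning the minimum isometry gap down to exactly $2^{(p-1)/p}$ (and checking that every other pair gives a strictly larger distance), together with the strict count $(16p-10)<(16p-9)$ that makes the covering estimate strict; once these are in place, the rest of the argument is a transparent re-run of the proof of Theorem~\ref{th-propertyP} with the constants replaced by their explicit values.
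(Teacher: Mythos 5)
Your proposal is correct and follows essentially the same route as the paper: both compute the minimal gap $2^{\frac{p-1}{p}}$ between distinct isometries (the paper via Proposition~\ref{Clarkson}, you via Clarkson's inequality after reducing to $\|I-V\|$ by the group structure), both invoke the bound $|M_{A_\epsilon}|\leq 2(8p-5)$ for non-isometries, and both use the arc-length constant $\delta_1$ to reach a contradiction with $M_T=S_{\mathbb{X}}$. The only (cosmetic) difference is the final step: the paper uses a pigeonhole on a partition of $C$ into $16p-9$ equal arcs to produce an explicit point $z$ with $B(z,\delta_1)\cap M_{A_\epsilon}=\emptyset$, whereas you run the equivalent covering estimate $(16p-10)\cdot\frac{L}{16p-9}<L$ directly.
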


\begin{proof}
	From \cite[Th. 4.2]{CL} (see also \cite[Th. 2.11]{CSS}), we get\\
	
	$\text{Iso}(\mathbb{X},\mathbb{X})=\bigg\{ \pm\begin{pmatrix}
	1 & 0 \\ 
	0 &  1
	\end{pmatrix} ,\pm \begin{pmatrix}
	-1 & 0 \\ 
	0 &  1
	\end{pmatrix} ,\pm\begin{pmatrix}
	0 & 1 \\ 
	1 &  0
	\end{pmatrix} ,\pm \begin{pmatrix}
	0 & -1 \\ 
	1 &  0
	\end{pmatrix}  \bigg\}.$\\
	Let $T_1, T_2\in \text{Iso}(\mathbb{X},\mathbb{X})$ such that $T_1\neq T_2.$ Then by a simple calculation and using Proposition \ref{Clarkson}, it is easy to see that $ \|T_1-T_2\|= 2^{\frac{p-1}{p}}~~\text{or}~~ 2.$ Thus, $ min\big\{\|T_1-T_2\|:T_1, T_2\in \text{Iso}(\mathbb{X},\mathbb{X}),T_1\neq T_2\big\}= 2^{\frac{p-1}{p}}.$
 Let $\epsilon_0=\min\{2^{\frac{p-1}{p}},\delta_1\},$ where $\delta_1$ is as mentioned in the theorem. Let $0<\epsilon<\epsilon_0.$ If possible, let $A_\epsilon$ be a non-trivial uniform $\epsilon-$BPB approximation of $T$ and the corresponding constant is $\delta(\epsilon).$ Then $0<\|T-A_\epsilon\|<\epsilon<2^{\frac{p-1}{p}}$ implies that $A_\epsilon$ is not an isometry. Hence, by \cite[Th. 2.8]{S}, $|M_{A_\epsilon}|\leq 2(8p-5).$ Let $P=\{x_1,x_2,\ldots,x_{16p-9}\}$ be a partition of $C$ such that $s(x_{16p-9},x_1)=s(x_i,x_{i+1})=\frac{L}{16p-9}$ for all $1\leq i\leq 16p-8.$ Since $|M_{A_\epsilon}|\leq 16p-10,$ without loss of generality, we may assume that the shorter arc between $x_1$ and $x_2$ along the curve $C$ does not contain any vector of $M_{A_\epsilon}.$ Choose a vector $z\in C$ such that $s(x_1,z)=s(z,x_2)=\frac{L}{2(16p-9)}.$ Clearly, $z$ is in the shorter arc between $x_1$ and $x_2.$ Now, by definition of arc length constant, we have, $\|x_1-z\|\geq \delta(\frac{L}{2(16p-9)})=\delta_1$ and  $\|x_2-z\|\geq \delta(\frac{L}{2(16p-9)})=\delta_1.$ Thus, $B(z,\delta_1)\cap M_{A_\epsilon}=\emptyset.$ Since $T$ is an isometry, $z\in M_T,$ i.e., $1=\|Tz\|>1-\delta(\epsilon).$ Therefore, there exists $x\in M_{A_\epsilon}$ such that $\|z-x\|<\epsilon<\delta_1.$ This contradicts that $B(z,\delta_1)\cap M_{A_\epsilon}=\emptyset.$ Thus, $T$ is the only uniform $\epsilon-$BPB approximation of $T.$ This completes the proof of the theorem. 
\end{proof}

\section{Uniform $\epsilon-$BPB approximation on $\mathbb{L}(\mathbb{H}_1,\mathbb{H}_2)$}

After discussing the uniform $\epsilon-$BPB approximations of operators between general (and also some particular) Banach spaces, we would like to study the same under the more restrictive setting of Hilbert spaces. As mentioned in the introduction, the norm attainment set of an operator assumes a very special form in this case and we will see that it allows us to draw several stronger conclusions in comparison to the case of operators between Banach spaces. We begin with the observation that every Hilbert space satisfies Property (P).

\begin{prop}\label{th-propertyphil}
	Let $\mathbb{H}$ be a Hilbert space. Then $\mathbb{H}$ satisfies Property $(P).$
\end{prop}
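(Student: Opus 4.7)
The plan is to exploit the very rigid structure of norm attainment sets in the Hilbert space setting, as recalled from \cite{SP}: for any $T \in \mathbb{L}(\mathbb{H},\mathbb{H})$, the set $M_T$ is either empty or equal to $S_{H_0}$ for some closed subspace $H_0$ of $\mathbb{H}$. Combined with the orthogonality available in a Hilbert space, this should let us produce a single universal constant $r_0$ that works for every non-isometric norm one operator simultaneously.

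First I would fix an arbitrary $A \in S_{\mathbb{L}(\mathbb{H},\mathbb{H})} \setminus \text{Iso}(\mathbb{H},\mathbb{H})$ and analyze $M_A$. By the cited structure theorem, $M_A$ is either empty or equal to $S_{H_0}$ for some closed subspace $H_0$ of $\mathbb{H}$. The key observation is that $H_0$ must be a \emph{proper} subspace: if $H_0 = \mathbb{H}$, then $\|Ax\| = \|A\| = 1 = \|x\|$ for all $x \in S_\mathbb{H}$, and by linearity $\|Ax\| = \|x\|$ for every $x \in \mathbb{H}$. By the polarization identity in a real Hilbert space this forces $A$ to preserve inner products, contradicting $A \notin \text{Iso}(\mathbb{H},\mathbb{H})$.

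Next I would choose the candidate vector. If $M_A = \emptyset$, any $x_A \in S_\mathbb{H}$ trivially works. Otherwise, since $H_0$ is a proper closed subspace, $H_0^{\perp} \neq \{0\}$, and I pick $x_A \in S_{H_0^{\perp}}$. For any $y \in M_A = S_{H_0}$, orthogonality gives $\langle x_A, y \rangle = 0$, hence
\[
\|x_A - y\|^2 = \|x_A\|^2 + \|y\|^2 = 2,
\]
so $\|x_A - y\| = \sqrt{2}$. Setting $r_0 := \sqrt{2}$, this shows $B(x_A, r_0) \cap M_A = \emptyset$, since the ball is open and every $y \in M_A$ is at distance exactly $\sqrt{2}$ from $x_A$.

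Since the same constant $r_0 = \sqrt{2}$ works uniformly for every non-isometric $A \in S_{\mathbb{L}(\mathbb{H},\mathbb{H})}$, the space $\mathbb{H}$ satisfies Property $(P)$. There is essentially no obstacle here: the entire argument is carried by the strong structural result on $M_A$ together with the orthogonal decomposition $\mathbb{H} = H_0 \oplus H_0^{\perp}$, both of which are specific to the Hilbert space setting. The only mild point requiring care is the justification that $H_0 \subsetneq \mathbb{H}$ when $A$ is not an isometry, which uses polarization and the assumption that the scalar field is $\mathbb{R}$.
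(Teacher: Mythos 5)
Your proposal is correct and follows essentially the same route as the paper: both invoke the structure theorem $M_A=S_{H_0}$ from \cite{SP}, note that $H_0$ is proper when $A$ is not an isometry, and take $x_A\in S_{H_0^{\perp}}$ so that orthogonality separates $x_A$ from $M_A$ (the paper settles for $r_0=1$ via $\|x-y\|\ge 1$, while you compute the exact distance $\sqrt{2}$, a harmless sharpening). The only point the paper handles that you should not omit is the case $M_A=\emptyset$, which you do cover, so there is no gap.
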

\begin{proof}
	Let $A\in S_{\mathbb{L}(\mathbb{H},\mathbb{H})}\setminus \text{Iso}(\mathbb{H},\mathbb{H}).$ If $A$ does not attain its norm, then for any $x\in S_\mathbb{H},B(x,1)\cap M_A=\emptyset.$ Let $M_A\neq \emptyset.$ Then by \cite[Th. 2.2]{SP} we can say that there exists a subspace $H_0$ of $\mathbb{H}$ such that $M_A=S_{H_0}.$ Since $A$ is not an isometry, $H_0\neq \mathbb{H}.$ Therefore, $H_0^\perp\neq \{\theta\}.$ Choose $x\in S_{H_0^\perp}.$ Then for each $y\in M_A, x\perp y,$ i.e., $\|x-y\|\geq 1.$ Therefore, $B(x,1)\cap M_A=\emptyset.$ This completes the proof of the proposition. 
\end{proof}

We discuss the exact preservation of operator norm attainment set under uniform $\epsilon-$BPB approximations in the following theorem.

\begin{theorem}\label{th-hil}
	Let $\mathbb{H}_1,\mathbb{H}_2$ be Hilbert spaces such that $dim(\mathbb{H}_2)>1$. Suppose $T\in S_{\mathbb{L}(\mathbb{H}_1,\mathbb{H}_2)}$ is such that $M_T=S_{H_0},$ where $H_0$ is a subspace of $\mathbb{H}_1.$ Then for every $\epsilon>0,$ $T$ admits a non-trivial uniform $\epsilon-$BPB approximation $A_\epsilon$ such that $M_T=M_{A_\epsilon}$ if and only if $\|T\|_{H_0^\perp}<1.$  
\end{theorem}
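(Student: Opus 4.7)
The plan is to prove the two implications separately, leaning heavily on the Hilbert space inner product structure.

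For the necessity, I would simply invoke Proposition~\ref{prop-dist} with $Z = H_0^\perp$. In any Hilbert space, the Pythagorean identity gives $\|x - y\|^2 = \|x\|^2 + \|y\|^2 \geq 1$ whenever $x \in S_{H_0}$ and $y \in H_0^\perp$, so $d(M_T, H_0^\perp) \geq 1 > 0$. Proposition~\ref{prop-dist} then forces $\|T\|_{H_0^\perp} < 1$.

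For the sufficiency, set $c := \|T\|_{H_0^\perp} < 1$. The whole construction rests on two structural consequences of the hypothesis $M_T = S_{H_0}$ that I would extract first. (i) $T|_{H_0}$ is an isometry: for $x_1 \in S_{H_0}$ we have $\|Tx_1\| = 1 = \|x_1\|$, and scaling extends this to all of $H_0$. (ii) $T(H_0) \perp T(H_0^\perp)$ in $\mathbb{H}_2$: for $x_1 \in S_{H_0}$, $x_2 \in H_0^\perp$ and $t \in \mathbb{R}$, expanding $\|T(x_1 + tx_2)\|^2 \leq \|x_1 + tx_2\|^2 = 1 + t^2\|x_2\|^2$ and comparing the linear-in-$t$ coefficients forces $\langle Tx_1, Tx_2 \rangle = 0$.

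With these in hand, for each integer $n > 1$ I would define
\[
A_n(x_1 + x_2) \;=\; Tx_1 + \Bigl(1 - \tfrac{1}{n}\Bigr)\,Tx_2, \qquad x_1 \in H_0,\; x_2 \in H_0^\perp.
\]
The orthogonality (ii) together with the isometry (i) gives, for $x_1 + x_2 \in S_{\mathbb{H}_1}$,
\[
\|A_n(x_1+x_2)\|^2 \;=\; \|x_1\|^2 + \Bigl(1-\tfrac{1}{n}\Bigr)^2\|Tx_2\|^2 \;\leq\; \|x_1\|^2 + \Bigl(1-\tfrac{1}{n}\Bigr)^2 c^2\,\|x_2\|^2 \;\leq\; 1,
\]
with equality iff $x_2 = 0$. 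Hence $\|A_n\| = 1$ and $M_{A_n} = S_{H_0} = M_T$. A direct estimate gives $\|T - A_n\| \leq c/n$, so $A_n \to T$, and for $c > 0$ one also has $A_n \neq T$ for every $n > 1$. (The degenerate case $c = 0$ would be handled by a small rank-one perturbation of $T$ into $T(H_0)^\perp$, which is available because $\dim \mathbb{H}_2 > 1$.)

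To verify the uniform $\epsilon$-BPB condition for a fixed $n = n(\epsilon)$ with $c/n < \epsilon$: by (i) and (ii) again, $\|Tx\|^2 = \|x_1\|^2 + \|Tx_2\|^2 \leq 1 - (1-c^2)\|x_2\|^2$, so $\|Tx\| > 1 - \delta$ forces $\|x_2\|^2 < 2\delta/(1-c^2)$. Taking $u := x_1/\|x_1\| \in S_{H_0} = M_{A_n}$, a short Pythagorean computation bounds $\|x - u\|^2 \leq 2\|x_2\|^2$, so $\delta(\epsilon) := \epsilon^2(1-c^2)/4$ suffices. The main obstacle---and the reason the Hilbert structure is indispensable---is claim (ii): without the strict orthogonality $T(H_0) \perp T(H_0^\perp)$, the Pythagorean identity driving every norm computation above collapses, and the scalar inequality $c < 1$ can no longer be upgraded to the operator-theoretic statement $\|A_n\| = 1$ attained exactly on $S_{H_0}$.
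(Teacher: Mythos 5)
Your necessity argument and your main case $c>0$ are exactly the paper's: Proposition~\ref{prop-dist} applied with $Z=H_0^\perp$, and the operators $A_n(x_1+x_2)=Tx_1+(1-\frac{1}{n})Tx_2$, whose norm-one property and norm attainment set you verify correctly from facts (i) and (ii). (The paper quotes (ii) from an earlier reference rather than reproving it, and obtains $\delta(\epsilon)$ by a sequential contradiction argument where you give the cleaner explicit value $\epsilon^2(1-c^2)/4$; these are cosmetic differences.) The genuine problem is the parenthetical dismissal of the degenerate case $c=\|T\|_{H_0^\perp}=0$. A rank-one perturbation of $T$ ``into $T(H_0)^\perp$'' presupposes $T(H_0)^\perp\neq\{\theta\}$, and that does \emph{not} follow from $\dim\mathbb{H}_2>1$: since $T|_{H_0}$ is an isometry, $T(H_0)$ is a closed subspace of dimension $\dim H_0$ and may well be all of $\mathbb{H}_2$. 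Take $\mathbb{H}_1=\ell_2^3$, $H_0=\ell_2^2\times\{0\}$, $\mathbb{H}_2=\ell_2^2$ and $T$ the orthogonal projection: here $c=0$, $\dim\mathbb{H}_2=2>1$, yet $T(H_0)^\perp=\{\theta\}$, so there is nothing to perturb into. A second failure mode is $H_0=\mathbb{H}_1$ (i.e.\ $T$ an isometry, for which $c=0$ holds vacuously): a bump $\eta\langle\cdot,e\rangle w$ with $w\perp T(\mathbb{H}_1)$ must then be supported on $H_0$ itself, which raises the norm to $\sqrt{1+\eta^2}>1$ and, after renormalizing, collapses the norm attainment set to $\{\pm e\}$. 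These are not vacuous instances of the theorem, so the case needs a real argument.

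The paper closes this case with a different device, which is what you need here: if $\dim H_0=1$, then $T$ is rank one and one perturbs the single image vector as in Theorem~\ref{th-rank1} (this is the one place where $\dim\mathbb{H}_2>1$ is genuinely used); if $\dim H_0>1$, pick orthonormal $e_1,e_2\in H_0$, replace $Te_1,Te_2$ by a slight plane rotation $u_1=aTe_1+bTe_2$, $u_2=-bTe_1+aTe_2$ \emph{inside} $T(H_0)$, and leave $T$ unchanged on the remaining basis vectors. The perturbed operator is still an isometry on $H_0$ and unchanged (hence of norm $<1$) on $H_0^\perp$, so $M_{A_\epsilon}=S_{H_0}$ survives, and the construction works whether or not $T(H_0)=\mathbb{H}_2$ or $H_0=\mathbb{H}_1$. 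With that replacement your proof is complete.
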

\begin{proof}
	The necessary part of the theorem follows from Proposition \ref{prop-dist}. We only prove the sufficient part of the theorem. Let $\|T\|_{H_0^\perp}<1.$ Let $\epsilon>0$ be given. We first show that there exits $\delta(\epsilon)>0$ such that $M_T(\delta(\epsilon))\subseteq \cup_{x\in M_T}B(x,\epsilon).$ If it is not true, then for each $n\in \mathbb{N},$ there exists $z_n\in M_T(\frac{1}{n})$ such that $z_n\notin \cup_{x\in M_T}B(x,\epsilon).$ Clearly, $\|Tz_n\|>1-\frac{1}{n},$ i.e., $\|Tz_n\|\to 1.$  Now, for each $n\in \mathbb{N},$ $z_n=x_n+y_n,$ where $x_n\in H_0$ and $y_n\in H_0^\perp.$ $1=\|z_n\|^2=\|x_n\|^2+\|y_n\|^2$ implies that $\{\|x_n\|\}$ and $\{\|y_n\|\}$ are bounded sequences of real numbers. Therefore, there exist convergent subsequences of $\{\|x_n\|\}$ and $\{\|y_n\|\}.$ Without loss generality, we may assume that $\{\|x_n\|\}$ and $\{\|y_n\|\}$ are convergent. Since $x_n\perp y_n$ and $x_n\in H_0,$ from \cite[Th. 2.3]{S}, we have, $Tx_n\perp Ty_n.$ Now,
	\begin{eqnarray*}
		&&\|Tz_n\|^2=\|Tx_n\|^2+\|Ty_n\|^2\\
		&\Rightarrow &\|Tz_n\|^2=\|T\|^2\|x_n\|^2+\|Ty_n\|^2\\
		&\Rightarrow& 1=\lim\|x_n\|^2+\lim\|Ty_n\|^2, ~\text{taking ~limit}~n\to\infty\\
		&\Rightarrow& 1-\lim \|x_n\|^2= \lim \|Ty_n\|^2\\
		&\Rightarrow& \lim \|y_n\|^2= \lim \|Ty_n\|^2.
	\end{eqnarray*} 
	Since  $\|T\|_{H_0^\perp}<1,$ there does not exist any non-zero subsequence of $\{\|y_n\|\}.$ Without loss of generality, we may assume that $y_n=0$ for all $n.$ Thus, for all $n,$ $z_n=x_n.$ Now, $\|z_n\|=1$ implies that $z_n\in M_T\subseteq \cup_{x\in M_T}B(x,\epsilon),$ a contradiction. Therefore, there exists $\delta(\epsilon)>0$ such that $M_T(\delta(\epsilon))\subseteq \cup_{x\in M_T}B(x,\epsilon).$
	Now, we consider two cases.\\
	
	\textbf{Case 1: $\|T\|_{H_0^\perp}>0.$}\\
	For each $n\in\mathbb{N}\setminus\{1\},$ define $A_n\in\mathbb{L}(\mathbb{H}_1,\mathbb{H}_2)$ by $$A_n(u+v)=Tu+(1-\frac{1}{n})Tv,~ \text{where}~ u\in H_0,v\in H_0^\perp.$$
	Since $\|T\|_{H_0^\perp}>0,$ there exists $v\in H_0^\perp$ such that $Tv\neq 0.$ Hence, $A_n\neq T$ for all $n.$ It is easy to observe that $\|A_n\|=1,$ $M_{A_n}=M_T$ and $A_n\to T.$ Therefore, there exists $k\in\mathbb{N}\setminus\{1\}$ such that $\|A_k-T\|<\epsilon.$ Now, let $z\in S_{\mathbb{H}_1}$ be such that $\|Tz\|>1-\delta(\epsilon).$ Then $z\in M_T(\delta(\epsilon))\subseteq \cup_{x\in M_T}B(x,\epsilon).$ Therefore, there exists $x\in M_T=M_{A_k}$ such that $z\in B(x,\epsilon),$ i.e., $\|z-x\|<\epsilon.$ Thus, $A_k$ is a non-trivial uniform $\epsilon-$BPB approximation of $T$ such that $M_{A_k}=M_T.$ \\
	
	\textbf{Case 2: $\|T\|_{H_0^\perp}=0.$}\\
	If $H_0$ is one-dimensional, then rank$(T)=1$ and proceeding similarly as in Theorem \ref{th-rank1} we can show that $T$ admits a non-trivial uniform $\epsilon-$BPB approximation $A_\epsilon$ such that $M_{A_\epsilon}=M_T.$ Assume that $H_0$ is not one-dimensional. Suppose that $\mathcal{B}=\{e_\alpha:\alpha\in \Lambda\}$ is a complete orthonormal basis of $\mathbb{H}_1.$ For simplicity assume that $1,2\in \Lambda$ and $e_1,e_2\in H_0.$ Choose $u_1=aTe_1+bTe_2\in S_{\mathbb{H}_2}\cap B(Te_1,\frac{\epsilon}{2}),$ where $a,b\in \mathbb{R}.$ Let $u_2=-bTe_1+aTe_2.$ Then $u_1\perp u_2$ and $u_2\in S_{\mathbb{H}_2}\cap B(Te_2,\frac{\epsilon}{2}).$
	
	Now, define $A_\epsilon:\mathbb{H}_1\to\mathbb{H}_2$ in the following way:
	\begin{eqnarray*}
		A_\epsilon e_1 &=& u_1,\\
		A_\epsilon e_2 &=& u_2,\\
		A_\epsilon e_\alpha &=& Te_\alpha~ \text{for~all~} \alpha \in \Lambda \setminus \{1,2\}.
	\end{eqnarray*}
	It is easy to observe that $A_\epsilon$ can be extended to a bounded linear operator from $\mathbb{H}_1$ to $\mathbb{H}_2.$ Clearly, $M_{A_\epsilon}=S_{H_0}=M_T$ and $\|T-A_\epsilon\|<\epsilon.$ Now, proceeding similarly as in \textbf{Case 1} it can be shown that $A_\epsilon$ is a non-trivial uniform $\epsilon-$BPB approximation of $T.$ 
	This completes the proof of the sufficient part. 
\end{proof}

We would like to point out that given   $T\in S_{\mathbb{L}(\mathbb{H})}$  and $\epsilon>0,$ we can find $A_{\epsilon}$  which is a non-trivial uniform $\epsilon-$BPB approximation of $T$ with  $M_T\neq M_{A_{\epsilon}}.$ Here is one such example.

\begin{example}
	Let $\mathbb{X}=\ell_{2}^2$  and  $T=\begin{pmatrix}
	1 & 0 \\ 
	0 &  0
	\end{pmatrix}.$ Then $M_T=\{\pm (1,0)\}.$ Let $\epsilon>0$ be given. Choose $\theta\in (0,\frac{\pi}{2})$ such that $1-\frac{\epsilon^2}{8}<\sin \theta.$ Let
	  $A_{\epsilon}=\begin{pmatrix}
	\sin^2\theta & \sin\theta\cos\theta \\ 
	\sin\theta\cos\theta &  \cos^2\theta
	\end{pmatrix}.$
	Clearly, $T\neq A_{\epsilon}, \|A_{\epsilon}\|=1 ~~ $ and $M_{A_{\epsilon}}=\{\pm(\sin\theta,\cos\theta)\}.$ Therefore,  $M_T\neq M_{A_{\epsilon}}.$
	Now, $T-A_{\epsilon}=\begin{pmatrix}
	\cos^2\theta & -\sin\theta\cos\theta \\ 
	-\sin\theta\cos\theta &  -\cos^2\theta
	\end{pmatrix}.$ It is easy to see that $\|T-A_\epsilon\|=\cos\theta<\frac{\epsilon}{2}$. Now, by Theorem \ref{th-appnorm}, there exists $\delta>0$ such that $M_T(\delta)\subseteq B((1,0),\frac{\epsilon}{2})\cup B(-(1,0),\frac{\epsilon}{2}).$ Choose $\delta(\epsilon)=\delta.$ Let $(x,y)\in S_\mathbb{X}$ be such that $\|T(x,y)\|>1-\delta(\epsilon).$ Without loss of generality, assume that $(x,y)\in B((1,0),\frac{\epsilon}{2}).$ Now, observe that $\|(1,0)-(\sin\theta,\cos\theta)\|<\frac{\epsilon}{2}.$ Therefore, $\|(x,y)-(\sin\theta,\cos\theta)\|\leq\|(x,y)-(1,0)\|+\|(1,0)-(\sin\theta,\cos\theta)\|<\epsilon.$ Thus, $A_\epsilon$ is a non-trivial uniform $\epsilon-$BPB approximation of $T$ such that $M_T\neq M_{A_\epsilon}.$
\end{example}

Our next result shows that given an Euclidean space $ \mathbb{H} $ of any (finite) dimension, the pair $ \mathbb{L}(\mathbb{H}, \mathbb{H}) $ has non-trivial norm preserving uniform $\epsilon-$BPB approximation property.

\begin{theorem}
	The pair $(\ell_2^n,\ell_2^n) (n\in \mathbb{N})$ has non-trivial norm preserving uniform $\epsilon-$BPB approximation property.
\end{theorem}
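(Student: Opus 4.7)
The plan is to reduce everything to Theorem \ref{th-hil}, after disposing of the trivial case $n=1$ separately. When $n=1$, every norm-one operator on $\ell_2^1=\mathbb{R}$ is $\pm \mathrm{Id}$ and is therefore an isometry, so $S_{\mathbb{L}(\ell_2^1,\ell_2^1)}\setminus \mathrm{Iso}(\ell_2^1,\ell_2^1)=\emptyset$ and Definition \ref{def-pair} is vacuously satisfied. So I assume $n\geq 2$ from now on.

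Fix $T\in S_{\mathbb{L}(\ell_2^n,\ell_2^n)}\setminus \mathrm{Iso}(\ell_2^n,\ell_2^n)$ and $\epsilon>0$. Since $\ell_2^n$ is finite-dimensional, $M_T\neq\emptyset$, and by the cited structural result \cite[Th. 2.2]{SP} there is a subspace $H_0\subseteq\ell_2^n$ with $M_T=S_{H_0}$. I would first argue that $H_0$ is a proper subspace: if $H_0=\ell_2^n$, then $\|Tx\|=1$ for every $x\in S_{\ell_2^n}$, which, by linearity, would force $T$ to be an isometry, contradicting our choice of $T$. Hence $H_0^{\perp}\neq\{0\}$.

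Next I would verify the hypothesis $\|T\|_{H_0^{\perp}}<1$ of Theorem \ref{th-hil}. Compactness of $S_{H_0^{\perp}}$ (again by finite-dimensionality) guarantees that the supremum defining $\|T\|_{H_0^{\perp}}$ is attained at some $x_0\in S_{H_0^{\perp}}$; were $\|Tx_0\|=1$, then $x_0\in M_T=S_{H_0}$, contradicting $H_0\cap H_0^{\perp}=\{0\}$. Thus $\|T\|_{H_0^{\perp}}<1$.

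Having checked both hypotheses ($\dim(\ell_2^n)>1$ and $\|T\|_{H_0^{\perp}}<1$) together with $M_T=S_{H_0}$, an application of Theorem \ref{th-hil} furnishes, for our fixed $\epsilon$, a non-trivial uniform $\epsilon$-BPB approximation $A_\epsilon$ of $T$ satisfying $M_{A_\epsilon}=M_T$. Since $T$ and $\epsilon$ were arbitrary, Definition \ref{def-pair} is met, so $(\ell_2^n,\ell_2^n)$ has the non-trivial norm preserving uniform BPB approximation property. There is no real obstacle here beyond recognizing that the finite-dimensional Hilbert space setting makes every hypothesis of Theorem \ref{th-hil} automatic; the only point requiring a (short) argument is the strict inequality $\|T\|_{H_0^{\perp}}<1$, which I handle above by compactness.
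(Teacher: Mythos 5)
Your proposal is correct and follows essentially the same route as the paper: invoke \cite[Th. 2.2]{SP} to write $M_T=S_{H_0}$, observe $\|T\|_{H_0^{\perp}}<1$, and apply Theorem \ref{th-hil}. You are in fact slightly more careful than the paper, spelling out the compactness argument for $\|T\|_{H_0^{\perp}}<1$ and separately disposing of the $n=1$ case (where Theorem \ref{th-hil}'s hypothesis $\dim(\mathbb{H}_2)>1$ fails but the claim is vacuous), both of which the paper leaves implicit.
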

\begin{proof}
	Let $T\in S_{\mathbb{L}(\ell_2^n,\ell_2^n)}\setminus \text{Iso}(\ell_2^n,\ell_2^n).$ By \cite[Th. 2.2]{SP}, there exists a subspace $H_0$ of $\ell_2^n$ such that $M_T=S_{H_0}.$ It is easy to observe that $\|T\|_{H_0^\perp}<1.$ Therefore, using Theorem \ref{th-hil}, we can say that for any $\epsilon>0,$ $T$ admits a non-trivial uniform $\epsilon-$BPB approximation $A_\epsilon$ such that $M_T=M_{A_\epsilon}.$ Hence, The pair $(\ell_2^n,\ell_2^n) (n\in \mathbb{N})$ has non-trivial norm preserving uniform $\epsilon-$BPB approximation property.
\end{proof}

We now obtain a complete characterization of Euclidean spaces among $ l_p^n $ spaces, in terms of uniform $\epsilon-$BPB approximations.

\begin{theorem}
	Let $\mathbb{X} = \ell_p^n, 1 < p <\infty, n \in \mathbb{N}.$ Then $p=2,$ i.e., $\mathbb{X}$  is a Hilbert space if and only if for each $\epsilon > 0$ and for each isometry $T$ on $\mathbb{X},$  $T$ has a non-trivial uniform $\epsilon$-BPB approximation which is also an isometry.
\end{theorem}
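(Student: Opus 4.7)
The plan is to handle the two directions separately, exploiting the dichotomy between the continuous isometry group of a Hilbert space and the discrete isometry group of $\ell_p^n$ for $p \neq 2$.

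For the sufficiency direction ($p = 2 \Rightarrow$ the property), I would argue as follows. When $p = 2$, the isometry group on $\ell_2^n$ (assuming $n \geq 2$) is the orthogonal group $O(n)$, which is a Lie group of positive dimension. Hence, given any isometry $T$ and any $\epsilon > 0$, there is an isometry $A_\epsilon \neq T$ with $\|T - A_\epsilon\| < \epsilon$; explicitly, one may compose $T$ with a small rotation in any coordinate $2$-plane. Because $A_\epsilon$ is itself an isometry, we have $M_{A_\epsilon} = S_{\ell_2^n}$. Consequently, the uniform $\epsilon$-BPB condition is satisfied trivially with any $\delta(\epsilon) > 0$: for each $x_0 \in S_{\ell_2^n}$ with $\|T x_0\| > 1 - \delta(\epsilon)$, take $u_0 = x_0 \in M_{A_\epsilon}$, so that $\|u_0 - x_0\| = 0 < \epsilon$ and $\|T - A_\epsilon\| < \epsilon$. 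Thus $A_\epsilon$ is a non-trivial, isometric, uniform $\epsilon$-BPB approximation of $T$.

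For the necessity direction (the property $\Rightarrow p = 2$), I would argue by contradiction. Suppose $p \neq 2$. By the classical characterization of isometries of $\ell_p^n$ for $p \in (1,\infty) \setminus \{2\}$ (compare the references \cite{CL} and \cite{CSS} already cited in the paper), $\mathrm{Iso}(\ell_p^n, \ell_p^n)$ consists exactly of the signed permutation matrices, hence is a finite set of cardinality $2^n \cdot n!$. Now fix any isometry $T \in \mathrm{Iso}(\ell_p^n, \ell_p^n)$ and set
\[
c = \min\{\|T - V\| : V \in \mathrm{Iso}(\ell_p^n, \ell_p^n),\, V \neq T\}.
\]
Finiteness forces $c > 0$. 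Choose any $\epsilon$ with $0 < \epsilon < c$. By the assumed property, $T$ admits a non-trivial uniform $\epsilon$-BPB approximation $A_\epsilon$ which is itself an isometry; in particular $A_\epsilon \neq T$ and $\|T - A_\epsilon\| < \epsilon < c$. This contradicts the definition of $c$. Therefore $p = 2$.

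The only serious ingredient beyond the definitions and finiteness-of-orbit arguments is the classical description of $\mathrm{Iso}(\ell_p^n, \ell_p^n)$ for $p \neq 2$; this is the main potential obstacle but it is standard and already invoked elsewhere in the paper. Everything else is essentially formal, provided one tacitly assumes $n \geq 2$ so that $O(n)$ has positive dimension in the sufficiency direction.
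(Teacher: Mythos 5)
Your proof is correct, and the necessity direction ($p\neq 2$) is essentially identical to the paper's: both rest on the finiteness of $\mathrm{Iso}(\ell_p^n,\ell_p^n)$ for $p\in(1,\infty)\setminus\{2\}$ and the observation that an isometric uniform $\epsilon$-BPB approximation must lie within $\epsilon$ of $T$, which is impossible below the minimal gap $c>0$ between distinct isometries. For the sufficiency direction the paper instead invokes Theorem \ref{th-hil} with $M_T=S_{\mathbb{X}}$ to produce a non-trivial norm-one approximant $A$ with $M_A=S_{\mathbb{X}}$ (hence an isometry), whereas you construct $A_\epsilon$ directly by composing $T$ with a small rotation and note that the BPB condition is vacuous once $M_{A_\epsilon}=S_{\ell_2^n}$ (take $u_0=x_0$). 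Your route is more self-contained and, in substance, is exactly what the proof of Theorem \ref{th-hil} reduces to in the special case $H_0=\mathbb{H}_1$, so nothing is lost. Your explicit caveat about $n\geq 2$ is well taken and worth keeping: for $n=1$ the only isometries are $\pm\mathrm{id}$, so no non-trivial isometric approximation exists for small $\epsilon$ even when $p=2$, and the statement as written (with $n\in\mathbb{N}$) silently excludes this case; the paper's own proof carries the same implicit restriction, since Theorem \ref{th-hil} assumes $\dim(\mathbb{H}_2)>1$.
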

\begin{proof} First let $p=2,$ i.e., $\mathbb{X}$ is a Hilbert space. Let $T$ be an isometry on $\mathbb{X},$ then $ M_T = S _{\mathbb{X}}.$  So from Theorem \ref{th-hil}, it follows that $T$ has a non-trivial uniform $\epsilon$-BPB approximation $A$ such that $ M_T = M_A.$ So $A$ is the non-trivial uniform $\epsilon$-BPB approximation which is also an isometry. Conversely, suppose that $p \neq 2.$  Let $T$ be an isometry on $\mathbb{X}.$ From \cite[Th. 4.2]{CL}, it follows that there are only finitely many isometries on $\mathbb{X}.$  Let $\epsilon_0 = \min \{ \|T-U\|: U \in \text{Iso}(\mathbb{X}), T \neq U\}.$ Then $ \epsilon_0 > 0$. Choose $ 0 < \epsilon < \epsilon_0.$  Then it is clear that no non-trivial isometry can be a uniform $ \epsilon$-BPB approximation of $T$. This completes the proof.
\end{proof}

 In the following theorem, we deduce some necessary conditions for uniform $\epsilon-$BPB approximations for operators between Hilbert spaces, under some nice conditions.

\begin{theorem}\label{th-chhil}
	Let $\mathbb{H}_1,\mathbb{H}_2$ be Hilbert spaces. Let $T\in S_{\mathbb{L}(\mathbb{H}_1,\mathbb{H}_2)}.$ Let $M_T\neq \emptyset$ and $M_T=S_{H_0},$ where $H_0$ is a subspace of $H.$ Suppose $\sup\{\|Tz\|:z\in C\}<1$ holds for each closed subset $C$ of $S_{\mathbb{H}_1}$ with $d(M_T,C)>0.$ Then there exists $\epsilon_0>0$ such that for any $0<\epsilon<\epsilon_0,$ if $A_\epsilon$ is a uniform $\epsilon-$BPB approximation of $T,$ then the following conditions hold true:\\
	(i) $H_0^\perp \cap H=\{\theta\}$ and $H^\perp\cap H_0=\{\theta\},$ where $M_{A_\epsilon}=S_{H}$ for some subspace $H$ of $\mathbb{H}_1,$\\
	(ii) for any $\epsilon_1,\epsilon_2>0$ with $\epsilon_1^2+\epsilon_2^2>2\epsilon^2$ either $\|(T-A_\epsilon)|_{H_0}\|<\epsilon_1$ or $\|(T-A_\epsilon)|_{H_0^\perp}\|<\epsilon_2,$\\
	(iii) there exists $\delta=\delta(\epsilon)>0$ such that $M_T(\delta)\subseteq \cup_{x\in M_{A_\epsilon}}B(x,\epsilon).$ 
\end{theorem}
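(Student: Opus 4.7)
The plan is to dispatch the three conclusions in sequence, after two preparatory observations. First, since $A_\epsilon$ is a uniform $\epsilon-$BPB approximation of $T$ and $M_T\neq\emptyset$, the set $M_{A_\epsilon}$ is automatically non-empty, so \cite[Th. 2.2]{SP} applied to $A_\epsilon$ will furnish a closed subspace $H$ of $\mathbb{H}_1$ with $M_{A_\epsilon}=S_H$. Second, the set $C_0:=\{y\in S_{\mathbb{H}_1}:d(y,M_T)\geq 1\}$ is closed in $S_{\mathbb{H}_1}$ and satisfies $d(M_T,C_0)\geq 1>0$, so the hypothesis on $T$ will yield $\mu>0$ with $\sup_{z\in C_0}\|Tz\|<1-\mu$. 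I will take $\epsilon_0=\min\{\mu,\sqrt{2}\}$, which depends only on $T$.

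Conclusion (iii) is then just a transcription of Definition \ref{def-uniform}: taking $\delta=\delta(\epsilon)$ from that definition, every $x_0\in M_T(\delta(\epsilon))$ admits some $u_0\in M_{A_\epsilon}$ with $\|u_0-x_0\|<\epsilon$, so $x_0\in\bigcup_{x\in M_{A_\epsilon}}B(x,\epsilon)$. For conclusion (i), I will prove each inclusion by an orthogonality argument. If $x$ is a unit vector in $H_0^\perp\cap H$, then $x\in M_{A_\epsilon}$ forces $\|Tx\|\geq\|A_\epsilon x\|-\|T-A_\epsilon\|>1-\epsilon\geq 1-\mu$, while $x\perp H_0$ forces $\|x-y\|=\sqrt{2}$ for every $y\in S_{H_0}=M_T$, placing $x$ in $C_0$ and giving $\|Tx\|\leq 1-\mu$, a contradiction. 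If instead $y$ is a unit vector in $H^\perp\cap H_0$, then $y\in M_T$ and $\|Ty\|=1>1-\delta(\epsilon)$, so the BPB property produces $u_0\in M_{A_\epsilon}=S_H$ with $\|u_0-y\|<\epsilon\leq\sqrt{2}$, contradicting the equality $\|u_0-y\|=\sqrt{2}$ that follows from $y\perp u_0$.

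The hard part will be conclusion (ii), whose crux is establishing the Hilbert-space inequality
\[
2\|S\|^2\;\geq\;\bigl\|S|_{H_0}\bigr\|^2+\bigl\|S|_{H_0^\perp}\bigr\|^2
\]
for any bounded linear $S:\mathbb{H}_1\to\mathbb{H}_2$, with respect to the orthogonal decomposition $\mathbb{H}_1=H_0\oplus H_0^\perp$. The idea is that for $x_0\in S_{H_0}$ and $x_1\in S_{H_0^\perp}$, the vectors $(x_0\pm x_1)/\sqrt{2}$ lie in $S_{\mathbb{H}_1}$, and the parallelogram identity in $\mathbb{H}_2$ gives $\|Sx_0+Sx_1\|^2+\|Sx_0-Sx_1\|^2=2(\|Sx_0\|^2+\|Sx_1\|^2)$, so the larger of the two images already yields $2\|S\|^2\geq\|Sx_0\|^2+\|Sx_1\|^2$; supremizing over $x_0$ and $x_1$ independently completes the claim. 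Applying this to $S=T-A_\epsilon$ and taking the contrapositive, (ii) is immediate: if both $\|(T-A_\epsilon)|_{H_0}\|\geq\epsilon_1$ and $\|(T-A_\epsilon)|_{H_0^\perp}\|\geq\epsilon_2$, then $2\epsilon^2>2\|T-A_\epsilon\|^2\geq\epsilon_1^2+\epsilon_2^2>2\epsilon^2$, which is absurd. Beyond this parallelogram-law observation, the whole argument is just orthogonality bookkeeping combined with the hypothesis and the BPB definition.
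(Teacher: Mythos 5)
Your proposal is correct and follows essentially the same route as the paper: the same choice of a threshold $\epsilon_0$ coming from the hypothesis applied to the complement of a unit neighbourhood of $M_T$ (the paper packages this as an appeal to Theorem~\ref{th-ball} with $\delta=1$), the same orthogonality contradictions for (i), the same contrapositive for (ii), and (iii) as a direct transcription of Definition~\ref{def-uniform}. The one place you go beyond the paper is welcome: the inequality $2\|S\|^2\geq\|S|_{H_0}\|^2+\|S|_{H_0^\perp}\|^2$ is asserted without justification in the paper's proof of (ii), whereas your parallelogram-law argument with the unit vectors $(x_0\pm x_1)/\sqrt{2}$ actually establishes it.
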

\begin{proof}
	Using Theorem \ref{th-ball}, we can say that there exists $\epsilon_0>0$ such that for $0<\epsilon<\epsilon_0$ if $A_{\epsilon}$ is a uniform $\epsilon-$BPB approximation of $T,$ then 
	\begin{eqnarray}\label{eqhil}
	M_{A_\epsilon}\subseteq \cup_{x\in M_T}B(x,1).
	\end{eqnarray}
	Without loss of generality, we may assume that $\epsilon_0<1.$ Let $0<\epsilon<\epsilon_0.$ Suppose  $A_{\epsilon}$ is a uniform $\epsilon-$BPB approximation of $T$ and $\delta=\delta(\epsilon)$ be the corresponding constant. Let $M_{A_\epsilon}=S_{H}$ for some subspace $H$ of $\mathbb{H}_1.$ \\
	
	(i) If possible, suppose that $H_0^\perp \cap H\neq\{\theta\}.$ Then there exists $z\in S_{\mathbb{H}_1}$ such that $z\in H_0^\perp\cap H.$ Now, $z\in S_H$ implies that $z\in M_{A_\epsilon}.$ Therefore, by (\ref{eqhil}), there exists $x\in M_T$ such that $z\in B(x,1).$ Since $z\in H_0^\perp$ and $x\in H_0,$ we have $\|x-z\|\geq 1,$ a contradiction. Therefore, $H_0^\perp \cap H=\{\theta\}.$\\
	Now, if possible, suppose that $H^\perp \cap H_0\neq\{\theta\}.$ Then there exists $z\in S_{\mathbb{H}_1}$ such that $z\in H^\perp \cap H_0.$ Now, $z\in S_{H_0}$ implies that $z\in M_T,$ i.e., $\|Tz\|>1-\delta.$ Since $A_\epsilon$ is uniform $\epsilon-$BPB approximation, there exists $x\in M_{A_\epsilon}$ such that $\|z-x\|<\epsilon.$ Moreover, $z\in H^\perp$ and $x\in H$ gives that $\|z-x\|>1>\epsilon,$ a contradiction. Therefore, $H^\perp \cap H_0=\{\theta\}.$ This proves condition (i).\\
	
	(ii) Now, suppose that there exist $\epsilon_1,\epsilon_2>0$ with $\epsilon_1^2+\epsilon_2^2>2\epsilon^2.$ If possible, let $\|(T-A_\epsilon)|_{H_0}\|\geq\epsilon_1$ and  $\|(T-A_\epsilon)|_{H_0^\perp}\|\geq\epsilon_2.$ Then 
	$$2\epsilon^2>2\|T-A_\epsilon\|^2\geq \|(T-A_\epsilon)|_{H_0}\|^2+\|(T-A_\epsilon)|_{H_0^\perp}\|^2\geq \epsilon_1^2+\epsilon_2^2,$$
	a contradiction. Therefore, either $\|(T-A_\epsilon)|_{H_0}\|<\epsilon_1$ or $\|(T-A_\epsilon)|_{H_0^\perp}\|<\epsilon_2.$ This proves condition (ii).\\
	
	(iii) Next, let $z\in M_T(\delta).$ Then $\|Tz\|>1-\delta.$ Since $A_\epsilon$ is uniform $\epsilon-$BPB approximation of $T,$ there exists $x\in M_{A_\epsilon}$ such that $\|z-x\|<\epsilon,$ i.e., $z\in B(x,\epsilon).$ Thus, $M_T(\delta)\subseteq \cup_{x\in M_{A_\epsilon}}B(x,\epsilon).$ This proves condition (iii) and completes the proof of the theorem.
\end{proof}

As the final result of this article, we illustrate that when the dimension of the domain space is finite, the above theorem can be considerably improved.

\begin{theorem}
	Let $\mathbb{H}_1,\mathbb{H}_2$ be  Hilbert spaces such that $dim(\mathbb{H}_1)<\infty.$ Let $T\in S_{\mathbb{L}(\mathbb{H}_1,\mathbb{H}_2)}.$ Then there exists $\epsilon_0>0$ such that for any $0<\epsilon<\epsilon_0,$ if $A_\epsilon$ is a uniform $\epsilon-$BPB approximation of $T,$ then the following conditions hold true:\\
	(i) $dim(H_0)=dim(H),$ where $M_T=S_{H_0}$ and $M_{A_\epsilon}=S_{H}$ for some subspace $H_0,H$ of $\mathbb{H}_1,$\\
	(ii) for any $\epsilon_1,\epsilon_2>0$ with $\epsilon_1^2+\epsilon_2^2>2\epsilon^2$ either $\|(T-A_\epsilon)|_{H_0}\|<\epsilon_1$ or $\|(T-A_\epsilon)|_{H_0^\perp}\|<\epsilon_2,$\\
	(iii) there exists $\delta=\delta(\epsilon)>0$ such that $M_T(\delta)\subseteq \cup_{x\in M_{A_\epsilon}}B(x,\epsilon).$ 
\end{theorem}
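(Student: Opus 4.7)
My plan is to derive this result as a finite-dimensional refinement of Theorem \ref{th-chhil}. The strategy is to observe that the two hypotheses required by Theorem \ref{th-chhil}, namely $M_T \neq \emptyset$ and the sup condition on closed sets $C$ with $d(M_T,C) > 0$, are automatically satisfied whenever $\dim(\mathbb{H}_1) < \infty$, and then to strengthen the conclusion $H_0^\perp \cap H = \{\theta\}$ and $H^\perp \cap H_0 = \{\theta\}$ to the equality of dimensions using standard linear algebra.

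First I would verify the hypotheses of Theorem \ref{th-chhil}. Since $S_{\mathbb{H}_1}$ is compact, $T$ attains its norm, so $M_T \neq \emptyset$, and by \cite[Th. 2.2]{SP} we may write $M_T = S_{H_0}$ for some subspace $H_0$ of $\mathbb{H}_1$. For any closed subset $C \subseteq S_{\mathbb{H}_1}$ with $d(M_T, C) > 0$, compactness of $S_{\mathbb{H}_1}$ forces $C$ to be compact, and the continuous function $z \mapsto \|Tz\|$ attains its supremum on $C$ at some $z_0 \in C$; since $d(M_T, C) > 0$ ensures $z_0 \notin M_T$, we get $\sup\{\|Tz\|: z \in C\} = \|Tz_0\| < 1$. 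Hence the hypotheses of Theorem \ref{th-chhil} are satisfied, which yields $\epsilon_0 > 0$ such that for any $0 < \epsilon < \epsilon_0$, whenever $A_\epsilon$ is a uniform $\epsilon$-BPB approximation of $T$, conditions (ii) and (iii) of the present theorem hold verbatim, and in place of (i) we obtain $H_0^\perp \cap H = \{\theta\}$ and $H^\perp \cap H_0 = \{\theta\}$, where $M_{A_\epsilon} = S_H$ (note that $A_\epsilon$ also attains its norm by compactness, so the form $M_{A_\epsilon} = S_H$ is again guaranteed by \cite[Th. 2.2]{SP}).

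Next I would upgrade the two intersection conditions to the dimension equality. Let $P_{H_0}$ denote the orthogonal projection of $\mathbb{H}_1$ onto $H_0$. The condition $H_0^\perp \cap H = \{\theta\}$ means that the restriction $P_{H_0}|_H : H \to H_0$ has trivial kernel, hence is injective, so $\dim(H) \leq \dim(H_0)$. By perfect symmetry, applying the same argument with $H$ and $H_0$ interchanged (using $H^\perp \cap H_0 = \{\theta\}$), we get $\dim(H_0) \leq \dim(H)$. Combining these inequalities yields $\dim(H_0) = \dim(H)$, which is exactly condition (i).

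Finally, conditions (ii) and (iii) are transcribed directly from Theorem \ref{th-chhil}; no further work is required. I do not foresee a genuine obstacle: the main conceptual step is the verification that compactness of $S_{\mathbb{H}_1}$ automatically validates the sup hypothesis of Theorem \ref{th-chhil}, and the rest is the dimension argument above, which is elementary linear algebra.
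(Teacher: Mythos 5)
Your proposal is correct and follows essentially the same route as the paper: verify that compactness of $S_{\mathbb{H}_1}$ supplies the two hypotheses of Theorem \ref{th-chhil}, import conditions (ii) and (iii) verbatim, and upgrade the two trivial-intersection conditions to $\dim(H_0)=\dim(H)$. The only (immaterial) difference is that you deduce the dimension inequalities from injectivity of the orthogonal projection $P_{H_0}|_H$, whereas the paper uses the dimension formula $\dim(H_0^\perp\cap H)=\dim(H_0^\perp)+\dim(H)-\dim(H_0^\perp+H)$; both are equivalent pieces of elementary linear algebra.
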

\begin{proof}
	Since $dim(\mathbb{H}_1)< \infty,~M_T\neq \emptyset.$ Let $C$ be any closed subset of $S_{\mathbb{H}_1}$ with $d(M_T,C)>0.$ Then $C$ is a compact set. It is easy to observe that $\sup\{\|Tz\|:z\in C\}<1.$  Thus, the hypothesis of Theorem \ref{th-chhil}, is satisfied. Therefore, using Theorem \ref{th-chhil} we say that there exists $\epsilon_0>0$ such that for any $0<\epsilon<\epsilon_0,$ if $A_\epsilon$ is a uniform $\epsilon-$BPB approximation of $T,$ then conditions (ii) and (iii) of Theorem \ref{th-chhil} are true. We only show that condition (i) is true. Let  $M_T=S_{H_0}$ and $M_{A_\epsilon}=S_{H}$ for some subspace $H_0,H$ of $\mathbb{H}_1.$ Then by Theorem \ref{th-chhil}, we have  $H_0^\perp \cap H=\{\theta\}$ and $H^\perp\cap H_0=\{\theta\}.$ Let $dim(\mathbb{H}_1)=n,dim(H_0)=m$ and $dim(H)=k.$ Then 
	\begin{eqnarray*}
		0=dim(H_0^\perp\cap H)&=&dim(H_0^\perp)+dim(H)-dim(H_0^\perp +H)\\
		&=& n-m+k+dim(H_0^\perp+H)\\
		&\geq& n-m+k-n=k-m.
	\end{eqnarray*}
	Thus, $m\geq k.$ Similarly, $H^\perp\cap H_0=\{\theta\}$ gives that $k\geq m.$ Therefore, $m=k,$ i.e., $dim(H_0)=dim(H).$ This proves condition (i) and completes the proof of the theorem. 
\end{proof}

\bibliographystyle{amsplain}

\end{document}